\definecolor{C0}{HTML}{3182ce}
\definecolor{forestgreen}{rgb}{0,0,0}
\renewcommand*{\backrefalt}[4]{%
	\ifcase #1 \footnotesize{(Not cited.)}%
	\or        \footnotesize{(Cited on page~#2.)}%
	\else      \footnotesize{(Cited on pages~#2.)}%
	\fi}
\newtheorem{Thm}{Theorem}
\newtheorem{Prop}{Proposition}
\newtheorem{Lem}{Lemma}
\newtheorem{Cond}{Condition}
\newcommand{\R}{\mathbb{R}}
\newcommand{\X}{\mathcal{X}}
\newcommand{\Z}{\mathcal{Z}}
\newcommand{\A}[1]{\mathcal{A}_{#1}(n)}
\newcommand{\e}{\mathrm{e}}
\begin{document}
%--------------------------------------------------------------------------

\begin{center}
{\Large
\textbf{Bayesian mixture models (in)consistency \\
for the number of clusters}
}
\bigskip

 Louise Alamichel$^{1,\dagger}$\footnote{This work has been partially supported by the LabEx PERSYVAL-Lab (ANR-11-LABX-0025-01) funded by the French program Investissements d'Avenir.\\
 \indent 
 $^\dagger$Equal contribution.}, Daria Bystrova$^{1,\dagger}$, Julyan Arbel$^{1}$ \& Guillaume Kon Kam King$^{2}$ 
\bigskip

{\it
$^{1}$Univ. Grenoble Alpes, CNRS, Inria, Grenoble INP, LJK, 38000 Grenoble, France
}\\
{\tt \{louise.alamichel, daria.bystrova, julyan.arbel\}@inria.fr}

{\it
$^{2}$Université Paris-Saclay, INRAE, MaIAGE, 78350 Jouy-en-Josas, France
}\\
{\tt guillaume.kon-kam-king@inrae.fr}

\end{center}
\bigskip

%--------------------------------------------------------------------------

%%%%%%%%%%%%%%%%%%%%%%%%%%%%%%%%%%%%%%%%%%%%%% ABSTRACT %%%%%%%%%%%%%%%%%%%%%%%%%%%%%%%%%%%%%%%%%%%%%%

\begin{abstract}
    \begin{onehalfspace}
    Bayesian nonparametric mixture models are common for modeling complex data. While these models are well-suited for density estimation, recent results proved posterior inconsistency of the number of clusters when the true number of components is finite, for the Dirichlet process and Pitman--Yor process mixture models. We extend these results to additional Bayesian nonparametric priors such as Gibbs-type processes and finite-dimensional representations thereof. The latter include the Dirichlet multinomial process, the recently proposed Pitman--Yor, and normalized generalized gamma multinomial processes. We show that mixture models based on these processes are also inconsistent in the number of clusters and discuss possible solutions. Notably, we show that a post-processing algorithm introduced for the Dirichlet process can be extended to more general models and provides a consistent method to estimate the number of components.
     \end{onehalfspace}
\end{abstract}

\noindent{\small \textbf{Keywords}: {Clustering; Finite mixtures; Finite-dimensional BNP representations; Gibbs-type process}
}

% Bayesian nonparametric mixture models are common for modelling complex data. 
% While these models are well-suited for density estimation, their application for clustering has some limitations. \cite{miller2014inconsistency} proved posterior inconsistency in the number of clusters when the true number of clusters is finite for the  Dirichlet process and Pitman--Yor process mixture models. 
% In this work, we extend this result to additional Bayesian nonparametric priors such as Gibbs-type processes and finite-dimensional representations of them. The latter include the Dirichlet multinomial process and the recently proposed Pitman--Yor and normalized generalized gamma multinomial processes. 
% We show that mixture models based on these processes are also inconsistent in the number of clusters and discuss possible solutions. Notably, we show that a post-processing algorithm introduced by \cite{guha2021posterior} for the Dirichlet process extends to more general models and provides a consistent method to estimate the number of components.

%\tableofcontents
%%%%%%%%%%%%%%%%%%%%%%%%%%%%%%%%%%%%%%%%%%%%%% BODY OF THE PAPER %%%%%%%%%%%%%%%%%%%%%%%%%%%%%%%%%%%%%%%%%%%%%%

%%%%%%%%%% Introduction %%%%%%%%%%
\section{Introduction}

\paragraph{Motivation.} Mixture models appeared as a natural way to model heterogeneous data, where observations may come from different populations. Complex probability distributions can be broken down into a combination of simpler models for each population. Mixture models are used for density estimation, model-based clustering \citep{fraley_model-based_2002} and regression \citep{muller1996bayesian}. Due to their flexibility and simplicity, they are widely used in many applications such as healthcare \citep{Ramrez_2019, ullah2019bayesian}, econometrics \citep{FrhwirthSchnatter_2012}, ecology \citep{attorre2020finite} and many others \citep[further examples in][]{fruhwirth-schnatter_handbook_2019}.

In a mixture model, data $X_{1:n} = (X_1,\ldots,X_n)$, $X_i \in \mathcal{X}\subset \mathbb{R}^p$ are modeled as coming from a $K$-components mixture distribution. If the \textit{mixing measure} $G$ is discrete, i.e. $G =\sum_{i=1}^K w_i\delta_{\theta_i}$ with positive weights $w_i$ summing to one and atoms $\theta_i$, then the \textit{mixture density} is
 \begin{equation} \label{mixture def}
     f^X(x) = \int f(x\mid\theta)G(\mathrm{d}\theta)=  \sum_{k=1}^K w_k f(x\mid\theta_k), 
\end{equation}
where $f(\cdot\mid\theta)$ represents a component-specific kernel density parameterized by $\theta$. We denote the set of parameters by $\theta_{1:K}= (\theta_1,\ldots, \theta_K)$, where each $\theta_k \in \mathbb{R}^d,\, k = 1,\ldots,K$.
Model~\eqref{mixture def} can be equivalently represented through latent allocation variables $z_{1:n} = \left(z_1, \ldots, z_n\right)$, $z_i \in \{1,\ldots,K\}$. Each $z_i$ denotes the component from which observation $X_i$ comes: $p(X_i\mid \theta_k) = p(X_i \mid z_i = k)$ with $w_k = P(z_i = k)$. Allocation variables $z_i$ define a clustering such that $X_i$ and $X_j$ belong to the same cluster if $z_i = z_j$. Moreover, $z_1,\ldots,z_n$ define a partition $A = (A_1,\ldots,A_{K_{n}})$ of $\{1,\ldots,n\}$, where $K_{n}$ denotes the number of clusters. 
    
It is important to distinguish between the number of components $K$, which is a model parameter, and the number of clusters $K_{n}$, which is the number of components from which we observed at least one data point in a dataset of size $n$ \citep{fruhwirth2021generalized, argiento2019infinity, greve2020spying}.
For a data-generating process with $K_0$ components, inference on \textcolor{forestgreen}{$K$} is typically done by considering the number of clusters $K_n$ and the present article investigates to what extent this is warranted.

Although mixture models are widely used in practice, they remain the focus of active theoretical investigations, owing to multiple challenges related to the estimation of mixture model parameters. These challenges stem from identifiability problems \citep{fruhwirth2006finite}, label switching \citep{celeux2000computational}, and computation complexity due to the large dimension of parameter space. 

Another critical question, which is the main focus of this article, regards the number of components and clusters, and whether it is possible to infer them from the data. This question is even more crucial when the aim of inference is clustering. 
The typical approach to estimating the number of components in a mixture is to fit models of varying complexity and perform model selection using a classic criterion such as the Bayesian Information Criterion (BIC), the Akaike Information Criterion (AIC), etc. 
This approach is not entirely satisfactory in general, because of the need to fit many separate models and the difficulty of performing a reliable model selection. 
Therefore, several methods that bypass the need to fit multiple models have been proposed. They define a single flexible model accommodating various possibilities for the number of components: mixtures of finite mixtures, Bayesian nonparametric mixtures, and overfitted mixtures. These methods have been prominently proposed in the Bayesian framework, where the specification of prior information is a powerful and versatile method to avoid overfitting by unduly complex mixture models.

\paragraph{Three types of discrete mixtures.} 
Although we consider discrete mixing measures, $G$ could be any probability distribution \citep[for continuous mixing measures, see for instance Chapter 10 in][]{fruhwirth-schnatter_handbook_2019}. 
Depending on the specification of the mixing measure, there exist three main types of discrete mixture models: \textit{finite mixture} models where the number of components $K$ is considered fixed (known, equal to $K_0$, or unknown), \textit{mixture of finite mixtures} (MFM) where $K$ is random and follows some specific distribution, and \textit{infinite mixtures} where $K$ is infinite.  Under a Bayesian approach, the latter category is often referred to as Bayesian nonparametric (BNP) mixtures.

Specification of the number of components $K$ is different for the three types of mixtures. When $K$ is unknown, the Bayesian approach provides a natural way to define the number of components by considering it random and adding a prior for $K$ to the model, as is done for mixtures of finite mixtures. Inference methods for MFM were introduced by \cite{nobile_1994, Richardson_1997}. 
%The algorithms proposed allow to take into account uncertainty in $K$, however, they often use reversible jump Markov chain Monte Carlo (RJMCMC) methods, which could be difficult to implement. \cite{miller2018} study the similarity between the MFM and the Dirichlet process mixtures (DPM) and prove that some efficient Gibbs sampling algorithms used for DPM can be used for MFM. \cite{fruhwirth2021generalized} suggest a generalized version of MFMs with a broader class of prior distributions on $K$, along with a novel telescoping sampling algorithm.
%which also avoids RJMCMC.   

Using Bayesian nonparametric (BNP) priors for mixture modeling is another way to bypass the choice of the number of components $K$. This is achieved by assuming an infinite number of components, which adapts the number of clusters found in a dataset to the structure of the data. The most commonly used BNP prior is the Dirichlet process introduced by \cite{ferguson1973bayesian} and the corresponding Dirichlet process mixture was first introduced by \cite{lo1984class}. The success of the Dirichlet process mixture is based on its ease of implementation and computational tractability. 
However, in some cases the Dirichlet process prior may be restrictive, so more flexible priors such as the Pitman--Yor process can be used.  Gibbs-type processes, introduced by \cite{gnedin2006exchangeable}, 
form an important general class of priors, which contain Dirichlet and Pitman--Yor processes and have flexible clustering properties while maintaining mathematical tractability, see \cite{lijoi_prunster_2010,de_blasi_are_2015} for a review. 
Compared to the Dirichlet process, Gibbs-type priors exhibit a predictive distribution which involves more information, that is, sample size and number of clusters \citep[refer to the sufficientness postulates for Gibbs-type priors of][]{Sufficientness}.
\textcolor{forestgreen}{The class of Gibbs-type priors encompasses BNP processes which} are widely used, for instance in species sampling problems \citep{lijoi_species_sampling2007,favaro_2009,favaro2012new, CESARI201479,arbel2017bayesian}, survival analysis \citep{Jara_2010}, network inference  \citep{caron2017sparse, Legramanti_2022}, linguistics \citep{teh2010hierarchical} and mixture modeling \citep{Ishwaran_2001, Lijoi_2005,lijoi2007controlling}.  \cite{miller2018,fruhwirth2021generalized,argiento2019infinity} study the connection between the mixtures of finite mixtures and BNP mixtures with Gibbs-type priors.
% More general classes of BNP priors used for clustering include the Pitman--Yor and Gibbs-type processes. These models are more flexible, however, their use is more computationally expensive.
A common approach to inferring the number of clusters in Bayesian nonparametric models is through the posterior distribution of the number of clusters. 

Finally, finite mixture models are considered when $K$ is assumed to be finite. 
%In the case of the finite mixture model, the number of components $K$ is assumed to be finite.
We distinguish two cases, depending on whether the number of components is known or unknown. The case when the number of components is known, say $K=K_0$, is referred to as the exact-fitted setting. An appealing way to handle the other case ($K_0$ unknown) is to use a chosen \textcolor{forestgreen}{upper} bound on $K_0$, i.e. to take the number of components $K$ such that $K\geq K_0$, yielding the so-called overfitted mixture models. A classic overfitted mixture model is based on the Dirichlet multinomial process, which is a finite approximation of the Dirichlet process \citep[see][for instance]{ishwaran_exact_2002}. Generalizations of the Dirichlet multinomial process were recently introduced by \cite{lijoi2020finite-dim,lijoi2020pitman}, which lead to more flexible overfitted mixture models.

\paragraph{Asymptotic properties of Bayesian mixtures.}
A minimal requirement for the reliability of a statistical procedure is that it should have reasonable asymptotic properties, such as consistency. This consideration also plays a role in the Bayesian framework, where asymptotic properties of the posterior distribution may be studied. In Table~\ref{Tab:summary}, we provide a summary of existing results of posterior consistency for the three types of mixture models, when it is assumed that data come from a finite mixture and that the kernel $f(\cdot\mid \theta)$ correctly describes the data generation process (i.e. the so-called \textit{well-specified setting}). We denote by $K_0$ the true number of components, $G_0$ the true mixing measure, and $f_0^X$ the true density written in the form of \eqref{mixture def}. For finite-dimensional mixtures, Doob's theorem provides posterior consistency in density estimation \citep{nobile_1994}.  However, this is a more delicate question for BNP mixtures. Extensive research in this area provides consistency results for density estimation under different assumptions for Bayesian nonparametric mixtures,  such as for Dirichlet process mixtures \citep{ghosal1999,ghosal2007posterior, kruijer_adaptive_2010} and other types of BNP priors \citep{lijoi2005consistency}. 
 In the case of MFM, posterior consistency in the number of clusters as well as in the mixing measure follows from Doob's theorem and was proved by \cite{nobile_1994}. Recently, \cite{miller2022consistency} provided \textcolor{forestgreen}{a} new proof with simplified assumptions.

For finite mixtures and Bayesian nonparametric mixtures, under some conditions of identifiability, kernel continuity, and uniformity of the prior, \cite{nguyenConvergenceLatentMixing2013} proves consistency for mixing measures and provides corresponding contraction rates.
These results only guarantee consistency for the mixing measure and do not imply consistency of the posterior distribution of the number of clusters. In contrast, posterior inconsistency of the number of clusters for Dirichlet process mixtures and Pitman--Yor process mixtures is proved by \cite{miller2014inconsistency}. 
To the best of our knowledge, this result was not shown to hold for other classes of priors. We fill this gap and provide an extension of \cite{miller2014inconsistency} results for Gibbs-type process mixtures and some of their finite-dimensional representations. 

Inconsistency results for mixture models do not impede real-world applications but suggest that inference about the number of clusters must be taken carefully. 
On the positive side, and in the case of overfitted mixtures, \cite{rousseau2011asymptotic} establish that the weights of extra components vanish asymptotically under certain conditions. 
Additional results by  \cite{chambaz:hal-00262054} establish posterior consistency for the mode of the number of clusters. 
\cite{guha2021posterior} propose a post-processing procedure that allows consistent inference of the number of clusters in mixture models. They focus on Dirichlet process mixtures and we provide an extension for Pitman--Yor process mixtures and overfitted mixtures in this article. Another possibility to solve the problem of inconsistency is to add flexibility for the prior distribution on a mixing measure through a prior on its hyperparameters. For Dirichlet multinomial process mixtures,
\cite{malsiner-walli_model-based_2016} observe empirically that adding a prior on the $\alpha$ parameter helps with centering the posterior distribution of the number of clusters on the true value (see their Tables 1 and 2). A similar result is proved theoretically by \cite{ascolani2022clustering} for Dirichlet process mixtures under mild assumptions. %\cite{ohn2020optimal} theoretically justify to use parameter $\alpha$ which depends on the sample size $n$ to obtain posterior consistency for the posterior distribution of the number of clusters.

As a last remark, although we focus on the well-specified case, an important research line in mixture models revolves around misspecified-kernel mixture models, when data are generated from a finite mixture of distributions that do not belong to the kernel family $f(\cdot \mid \theta)$. \cite{miller2018robust} shows how so-called coarsened posteriors allow performing inference on the number of components in MFMs with Gaussian kernels when data come from skew-normal mixtures. 
%in Gaussian mixtures, when data comes from skew-normal mixtures. 
 \cite{cai2021finite} provide theoretical results for MFMs, when the mixture component family is misspecified, showing that the posterior distribution of the number of components diverges. 
 Misspecification is of course a topic of critical importance in practice, however, the well-specified case is challenging enough to warrant its own extensive investigation.

\paragraph{Contributions and outline.}
In this rather technical landscape, it can be difficult for the non-specialist to keep track of theoretical advances in Bayesian mixture models. 
This article aims to provide an accessible review of existing results, as well as the following novel contributions (see Table~\ref{Tab:summary}):
\begin{itemize}
    \item We extend \cite{miller2014inconsistency} results to additional Bayesian nonparametric priors such as Gibbs-type processes (Proposition~\ref{prop:Gibbs}) and finite-dimensional representations of them (including the Dirichlet multinomial process and Pitman--Yor and normalized generalized gamma multinomial processes, Proposition~\ref{prop:PYM});
    \item We discuss possible solutions. In particular, we show that the \cite{rousseau2011asymptotic} result regarding emptying of extra clusters holds for the Dirichlet multinomial process and Pitman--Yor multinomial process (Proposition~\ref{prop:over}). Second, we establish that the post-processing algorithm introduced by \cite{guha2021posterior} for the Dirichlet process extends to more general models and provides a consistent method to estimate the number of components (Propositions~\ref{prop:consPY} and \ref{prop:consO}).
    \item We also provide insight into the non-asymptotic efficiency and practical application of these solutions through an extensive simulation study, and investigate alternative approaches which add flexibility to the prior distribution of the number of clusters.
\end{itemize}

% \textbf{Table 1 here}

% \begin{comment}
\begin{table}[H]
\caption{Results on consistency for different mixture models and quantities of interest in the case where kernel densities are well-specified and data comes from a finite mixture. Consistency is indicated with \textcolor{green!50!black}{\Checkmark} and inconsistency with \textcolor{red!80!black}{\XSolidBrush}. Our contributions regard the shaded cells. The references cited are [RGL19] \citet[][Theorem 4.1]{rousseau2019bayesian}; [GvdV17] \citet[][Theorem 7.15]{ghosal2017fundamentals}; [KRV10] \citet[][]{kruijer_adaptive_2010}; [HN16] \citet[][]{ho_strong_2016}; [Ngu13] \citet[][]{nguyenConvergenceLatentMixing2013}; [Nob94] \citet[][]{nobile_1994}; [MH14] \cite{miller2014inconsistency}; [GHN21] \cite{guha2021posterior}.}
\centering
% \footnotesize
\begin{tabular}{ScScScScSc}
    \toprule
    \multirow{2}{*}[-0.3em]{Quantity of interest} &
    \multicolumn{2}{c}{Finite} & 
    Infinite & 
    MFM
    \\
    \cmidrule(l){2-5}% \cmidrule(lr){4-4}\cmidrule(lr){5-5}
    {} &
    $K=K_0$  &
    $K \geq K_0$ &
    $K = \infty$ &
    $K$ random 
    \\ \midrule
    Density $f^X_0$ &
    \textcolor{green!50!black}{\Checkmark} [RGL19] &
    \textcolor{green!50!black}{\Checkmark} [RGL19] &
    \textcolor{green!50!black}{\Checkmark} [GvdV17] &
    \textcolor{green!50!black}{\Checkmark} [KRV10]
    \\ 
    Mixing measure $G_0$ &
    \textcolor{green!50!black}{\Checkmark} [HN16] &
    \textcolor{green!50!black}{\Checkmark} [HN16] &
    \textcolor{green!50!black}{\Checkmark} [Ngu13] &
    \textcolor{green!50!black}{\Checkmark} [Nob94]
    \\
    Nb of components $K_0$ &
    N/A &
    \cellcolor{gray!20!white}\textcolor{red!80!black}{\XSolidBrush} [ours] / \textcolor{green!50!black}{\Checkmark}  &
    \cellcolor{gray!20!white}\textcolor{red!80!black}{\XSolidBrush} [MH14, ours] / \textcolor{green!50!black}{\Checkmark}  &
    \textcolor{green!50!black}{\Checkmark} [GHN21]
    \\\bottomrule
\end{tabular}
 \label{Tab:summary}
\end{table}
% \end{comment}

The structure of the article is as follows: we start by introducing the notion of a partition-based mixture model and by presenting Gibbs-type processes and finite-dimensional representations of BNP processes in Section \ref{sec:priors}. 
We then recall in Section \ref{sec:inconsistency} the inconsistency results of \cite{miller2014inconsistency} on Dirichlet process mixtures and Pitman--Yor process mixtures and present our generalization.
We discuss some consistency results and a post-processing procedure in Section \ref{sec:consistency}. We conclude with a simulation study illustrating some of our results in Section \ref{sec:simulation} \textcolor{forestgreen}{and a real data analysis in Section \ref{sec:real}}, while the appendix contains proofs and additional details on the simulation \textcolor{forestgreen}{and real data} study.

%%%%%%%%%% Priors %%%%%%%%%%
\section{Bayesian mixture models and mixing measures}
\label{sec:priors}
We introduce or recall some notions useful for the rest of the paper. We start by defining the mixture model considered. It is based on a partition, whose distribution determines important aspects of the mixture. 
%If the distribution is a Bayesian nonparametric process such as the Gibbs-type process, then it defines a nonparametric mixture.
We introduce different types of priors on the partition, the Gibbs-type process, and some finite-dimensional representations of nonparametric processes such as the Pitman--Yor multinomial process. We conclude this section by recalling the notions of posterior consistency and contraction rate.

\subsection{Partition-based mixture model}
\label{sec:partition}

We consider partition-based mixture models as in \cite{miller2014inconsistency}. Let $\A{k}$ be the set of ordered partitions of $\{1,\ldots,n\}$  into $k  \in \{1,\ldots, n\}$ nonempty sets:
\begin{equation*}
\A{k} := \left\{ (A_1,\ldots,A_k) : A_1,\ldots,A_k \text{ disjoint, } \bigcup_{i=1}^k A_i = \{1,\ldots,n\}, \;  |A_i|\geq 1 \; \forall i\right\}.
\end{equation*}

We denote by $n_i :=|A_i|$ the cardinality of set $A_i$. We consider a partition distribution $p(A)$ on $\bigcup_{k=1}^n \A{k}$, which induces a distribution $p(k)$ on $\{1,\ldots,n\}$. We denote by $\pi$ a prior density on the parameters $\theta \in \Theta \subset\R^d$ and $f(\cdot \mid \theta)$ a parametrized component density. 
The hierarchical structure of a \textit{partition-based mixture model} is:
\begin{align*}
   % p(A, k) &= p(A)\mathbb{I}(A\in\A{k}),\\
    p(\theta_{1:k} \,|\, A, k) &= \prod_{i = 1}^k\pi(\theta_i), \\ 
    p(X_{1:n} \,|\, A,k,\theta_{1:k}) &= \prod_{i=1}^k \prod_{j\in A_i}f(X_j \mid \theta_i),
\end{align*}
where $X_{1:n} = (X_1,\ldots,X_n)$ with $X_i\in\X$, $\theta_{1:k} = (\theta_1,\ldots,\theta_k)$ with $\theta_i\in\Theta$, and $A\in\A{k}$.
%In particular, it is a Dirichlet process mixture model when 
%\[ p(A) = \frac{\alpha^k}{k!(\alpha)_{n}}\prod_{i=1}^k (n_i-1)!\]
%for $A\in\A{k}$, where $\alpha>0$ and $(x)_{n} = x(x+1)\cdots(x+n-1)$, with $(x)_{0} = 1$ by convention.\\
In the rest of the article, we denote by $K_n$ the number of clusters in a dataset of size $n$, which is denoted $k$ in this section for ease of presentation. $K_n$ highlights this quantity's random nature and dependence on $n$.

The distribution $p$ on the set of ordered partitions determines the type of the mixture model. Here, we consider two types of prior distributions on the partition: nonparametric ones as a Dirichlet process or a Gibbs-type process, and finite-dimensional ones as a Pitman--Yor multinomial process or a normalized infinitely divisible multinomial process.

\subsection{Gibbs-type processes}
Gibbs-type processes are a natural generalization of the Dirichlet process and Pitman--Yor process \citep[see for example][]{de_blasi_are_2015}. Gibbs-type processes of type $\sigma \in (-\infty,1)$ can be characterized through the probability distribution of the induced random ordered partition $A \in \A{k}$, which has the following form:
\begin{equation}
\label{eq:EPPF}
    p(A) = p(n_1, \ldots, n_k) =\frac{V_{n,k}}{k!}\prod_{j=1}^k (1-\sigma)_{n_j-1},  
\end{equation}
where $(x)_{n} = x(x+1)\cdots(x+n-1)$ is the ascending factorial and $(x)_{0} = 1$ by convention. $V_{n,k}$ are nonnegative numbers that satisfy the recurrence relation: 
\begin{equation}
    \label{eq:rec_Vnk}
    V_{n,k} = (n-\sigma k)V_{n+1,k}+V_{n+1,k+1}, \quad V_{1,1} = 1.
\end{equation}
The probability distribution for the unordered partition $\tilde{A}$ can be deduced from \eqref{eq:EPPF} multiplying by $k!$ to adjust for order: $p(\tilde{A}) = V_{n,k} \prod_{j=1}^k (1-\sigma)_{n_j-1}$.
Parameters $V_{n,k}$ admit the following form \citep[see][]{pitman2003poisson,gnedin2006exchangeable}:
\begin{equation}
\label{eq:Vnk2}
    V_{n,k}=\frac{\sigma^{k}}{\Gamma(n-k\sigma)}\int_{0}^{+\infty}\int_{0}^{1}t^{-k\sigma}p^{n-k\sigma-1}h(t)f_{\sigma}((1-p)t)\mathrm{d}t\mathrm{d}p,
\end{equation}
with $\Gamma$ the gamma function, $f_\sigma$ the density of a positive $\sigma$-stable random variable and $h$ a non-negative function. We limit ourselves to the case $0<\sigma<1$.

Gibbs-type processes are a general class including the Dirichlet and Pitman--Yor processes and some stable processes. 
The Pitman--Yor family can be defined by the probability $p$ in \eqref{eq:EPPF} with parameters
\[ V_{n,k} = \frac{\prod_{i=1}^{k-1} (\alpha+i\sigma)}{(\alpha+1)_{n-1}},%\quad \text{as } h(t) = \frac{\sigma\Gamma(\alpha)}{\Gamma(\alpha/\sigma)}t^{-\alpha},
\]
where $\sigma \in [0,1)$ and $\alpha\in(-\sigma,\infty)$. % or $\sigma \in (-\infty,0)$ and $\alpha=N|\sigma|$ for some $N>1$.
If $\sigma=0$, we obtain the Dirichlet process for which $V_{n,k} = \alpha^k/(\alpha)_n$.

The normalized generalized gamma process \citep[NGG,][]{lijoi2007controlling} is another particular case of Gibbs-type processes, with  parameters 
\begin{equation}
    \label{eq:Vnk_NGG}
    V_{n,k} = \frac{\e^\beta\sigma^{k-1}}{\Gamma(n)}\sum_{i=0}^{n-1} \binom{n-1}{i}(-1)^i\beta^{i/\sigma}\Gamma\left(k-\frac{i}{\sigma};\beta\right), %\quad \text{as } h(t) = \e^{\beta^\sigma-\beta t},
\end{equation} 
where $\sigma\in(0,1)$, $\beta>0$ and $\Gamma(\cdot;\cdot)$ is the following incomplete gamma function: $\Gamma(x;a)=\int_x^\infty s^{a-1}\e^{-s} \mathrm{d}s$. If $\beta = 0$ we obtain the normalized $\sigma$-stable process. Furthermore, if $\sigma\to 0$, then we also recover the Dirichlet process (see Figure~\ref{fig:processes-graphs}(a) for a graphical representation of the relations between these BNP processes).

%\textcolor{red}{Add a small paragraph to give concrete illustrations of Gibbs-type process, say DP corresponds to this Vnk, stable process to this one, etc. so the beginner reader can see the connection to more familiar objects.}

\subsection{Finite-dimensional representations}
\label{sec:finite-dim}
Finite-dimensional representations for BNP priors have been developed to deal with situations where the increase of the number of clusters with the sample size is unrealistic, such as when \textcolor{forestgreen}{an upper} bound on the number of clusters is known. They are convenient and tractable models that share many properties of their infinite-dimensional counterparts, such as a clear interpretation of their parameters and efficient sampling algorithms. They naturally approximate their associated nonparametric priors as their dimension increases. See Figure~\ref{fig:processes-graphs}(b) for a graphical representation of the relations between these multinomial mixing measures.
\paragraph{Dirichlet multinomial process.} The simplest example of such a finite-dimensional representation is the Dirichlet multinomial distribution \citep[see for instance][]{muliere1995note,ishwaran2000markov}. A Dirichlet multinomial process with concentration parameter $\alpha>0$, number of components $K$,  and base measure $P$, is a random discrete measure $G= \sum_{k=1}^K w_k\delta_{\theta_k}$ characterized by a Dirichlet distribution on the weights with parameter $\alpha/K$: $(w_1,\ldots, w_n) \sim \text{Dir}(\alpha/K, \ldots, \alpha/K)$ and, as usual, location parameters $\theta_k$ are distributed according to the base measure $P$.  \cite{muliere2003weak} proves that the Dirichlet multinomial process with parameters $\alpha$, $K$,  and $P$  approximates the Dirichlet process with parameters $\alpha$ and $P$, in the sense of the weak convergence, when $K \to \infty$.
Recent works by \cite{lijoi2020finite-dim,lijoi2020pitman} develop finite-dimensional versions of the Pitman--Yor process and normalized random measures with independent increments \citep[][]{regazzini_distributional_2003}. The latter include the Dirichlet and normalized generalized gamma multinomial processes as special cases.
%These processes can be seen as approximations for the corresponding infinite-dimensional processes.

\paragraph{Pitman--Yor multinomial process.}% \cite{lijoi2020pitman}:}
The Pitman--Yor multinomial process is based on the Pitman--Yor process. Fix some integer $K\geq1$, base measure $P$, and parameters $\alpha, \sigma$ as in the Pitman--Yor process case above.
The Pitman--Yor multinomial process is defined by \cite{lijoi2020pitman} as a discrete random probability measure $p_K$ such that 
\[ G_K\mid G_{0,K}\sim \textsc{PY}(\sigma,\alpha;G_{0,K}), \quad G_{0,K} = \frac{1}{K}\sum_{k=1}^K\delta_{\tilde\theta_k}, \]
where $\tilde\theta_k \overset{\text{iid}}{\sim} P$.  %We will write $\Tilde{p}_H\sim \textsc{PYM}(\sigma,\alpha,H;P)$.
For all $A\in\A{k}$, the partition distribution for the Pitman--Yor multinomial process is:
\begin{equation}
    \label{eq:EPPF_PYM}
    p(A)  = \binom{K}{k}\frac{1}{(\alpha+1)_{n-1}} \sum_{(\ell_1,\ldots,\ell_k)} \frac{\Gamma(\alpha/\sigma+|\ell^{(k)}|)}{\sigma\Gamma(\alpha/\sigma+1)}\prod_{i=1}^{k} \frac{C(n_i,\ell_i;\sigma)}{K^{\ell_i}},
\end{equation}
where $k = |A|$ and the sum runs over the vectors $\ell^{(k)} = (\ell_1,\ldots,\ell_k)$ such that $\ell_i \in \{1,\ldots,n_i\}$ and $|\ell^{(k)}| = \ell_1+\cdots+\ell_k$. Coefficients $C(n,k;\sigma)$ are the generalized factorial coefficients defined as
\begin{equation}
    \label{eq:Cnk}
    C(n,k;\sigma) = \frac{1}{k!}\sum_{j=0}^k(-1)^j \binom{k}{j}(-j\sigma)_n
\end{equation} 
As with the Pitman--Yor process, the random probability measure $p_K$ of the Pitman--Yor multinomial process reduces to the Dirichlet multinomial process when $\sigma=0$.
%This is also the case when $\sigma= -\alpha/K<0$.
%\textcolor{red}{Include also the connection with the Dirichlet multinomial process here, and how some expressions simplify.}
The Pitman--Yor multinomial process is thus a generalization of the Dirichlet multinomial process. 
As the latter, the Pitman--Yor multinomial process approximates the Pitman--Yor process, as the Pitman--Yor process is obtained as a limiting case when $K \to \infty$ \citep[see Theorem 5 in][]{lijoi2020pitman}. In addition, it is also more flexible than the Dirichlet multinomial process. 
It can be used as an effective computational tool in a nonparametric setting by replacing the stick-breaking construction in the classic Gibbs sampler \citep[see more details in][]{lijoi2020pitman}.
%\textcolor{red}{Maybe add a few words on why it's an interesting process, why one would use it and so why it's relevant to consider.}

\paragraph{Normalized infinitely divisible multinomial process.}% \cite{lijoi2020finite-dim}:}
Normalized infinitely divisible multinomial (\textsc{NIDM}) processes are introduced by \cite{lijoi2020finite-dim} and can be seen as a finite approximation for normalized random measures with independent increments (\textsc{NRMI}), see for instance \cite{regazzini_distributional_2003,james2009posterior}. 
\textsc{NIDM} processes can be described as \textsc{NRMI} measures using a hierarchical structure similar to the previous section
\[( G_K \mid  G_{0,K}) \sim \textsc{NRMI}(c, \rho;   G_{0,K}), \quad   G_{0,K} = \frac{1}{K} \sum_{k=1}^{K} \delta_{\tilde\theta_k},\]
where $\tilde\theta_k \overset{\text{iid}}{\sim}  P$ a base measure. In this expression, $\rho$ is a function that characterizes the \textsc{NRMI} process used. % Here we consider only homogeneous complete random measures, so $\rho$ is a function which characterizes the Lévy intensity of the \texttsc{NRMI}
The choice $\rho(s) = s^{-1}e^{-s}$ corresponds to the Dirichlet process. It yields the Dirichlet multinomial process whose distribution for all $A\in\A{k}$ is defined as
\begin{equation}
    \label{eq:EPPF_DPM}
    p(A) =  \binom{K}{k}\frac{1}{(\alpha)_n}\prod_{j=1}^{k}(\alpha/K)_{n_j},    
\end{equation}
where $k = |A|$. Similarly, choosing $\rho(s) = \frac{1}{\Gamma(1-\sigma)}s^{-1-\sigma}e^{-\beta s}$, $0\leq\sigma<1$ and $\beta\geq0$ amounts to considering an \textsc{NGG} characterized by \eqref{eq:Vnk_NGG}. We then get the \textsc{NGG} multinomial process. In this case, for all $A\in\A{k}$ the probability is
%We have with $k\leq\min(n,K)$:
\begin{equation}
    \label{eq:EPPF_NGGM}
    p(A) = \binom{K}{k} \sum_{(\ell_1,\ldots,\ell_k)} \frac{V_{n,|\ell^{(k)}|}}{K^{|\ell^{(k)}|}}\prod_{i=1}^{k} \frac{C(n_i,\ell_i;\sigma)}{\sigma^{\ell_i}},  
\end{equation}
where $k = |A|$ and $C(n,k;\sigma)$ are defined in \eqref{eq:Cnk} and the sum over $\ell^{(k)} = (\ell_1,\ldots,\ell_k)$ is as in the PY case. Parameters $V_{n,k}$ are defined in \eqref{eq:Vnk_NGG} for the particular case of \textsc{NGG} processes, which depend on $\beta$ and $\sigma$. 

% \textbf{Figure 1 here}
% \begin{comment}
\begin{figure}
    \centering
    \begin{tabular}{m{5cm} m{3.4cm} m{5cm}}
    \includegraphics[height = 4cm]{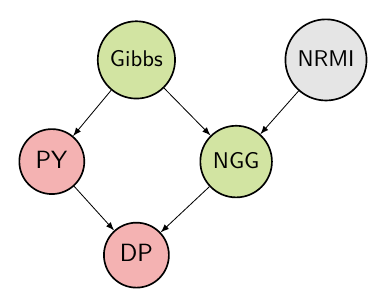} &
    %\vspace{-2cm}
    $\begin{array}{c}
         \xrightarrow{\text{``Multinomialization''}}  \\
         \\
         \xleftarrow{\text{Weak limit as } K\to\infty} 
    \end{array}$ &
    \includegraphics[height = 4cm]{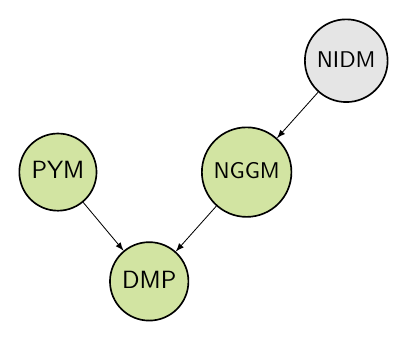} \\
     (a) BNP processes &   & (b) Multinomial processes
    \end{tabular}
    \caption{Graphical representation of the relationships between the discrete mixing measures considered in this article. An arrow indicates that the target is a special case of the origin. (a) BNP processes: Gibbs-type priors (Gibbs), normalized random measures with independent increments (\textsc{NRMI}), Pitman--Yor process (\textsc{PY}), normalized generalized gamma process (\textsc{NGG}), and Dirichlet process (\textsc{DP}). (b) Multinomial processes (finite-dimensional approximations of their respective BNP counterparts in the left panel): normalized infinitely divisible multinomial process (\textsc{NIDM}), Pitman--Yor multinomial process (\textsc{PYM}), normalized generalized gamma multinomial process (\textsc{NGGM}), and Dirichlet multinomial process (\textsc{DMP}). Going from left to right can be done according to a ``multinomialization'' of the BNP processes as described in Section~\ref{sec:finite-dim}, while the reverse direction is achieved by taking a weak limit as $K\to\infty$.  Our contributions generalize results known for mixing measures in red to mixing measures in green. The case of mixing measures in gray remains an open problem.}
    \label{fig:processes-graphs}
\end{figure}
% \end{comment}

\subsection{Posterior consistency}
% Explication intuition
Posterior consistency is an asymptotic property of the posterior. As in frequentist inference, we can consider that there exists a true value for the parameter of the distribution of the data. Then the posterior is said to be consistent if it converges to the true parameter as the sample size increases to infinity. 

% Definition
More formally, given a prior distribution $\pi$ on the parameter space $\Theta$, we denote by $\Pi(\cdot\mid X_{1:n})$ the posterior distribution with $X_{1:n}$ a given sample of the data.
The posterior distribution is said to be consistent at $\theta_0\in\Theta$ if \smash{$\Pi(U^c\mid X_{1:n})\underset{n\to\infty}{\longrightarrow} 0$} in $P_{\theta_0}$-probability for all neighborhoods $U$ of $\theta_0$. 
For instance, in our case, we consider mixture models for densities. In this type of model, the posterior density is said to be consistent at $f_0^X$ if, for a distance $d$ on the parameter space, $\Pi(d(f,f_0^X)\geq\varepsilon\mid X_{1:n})\underset{n\to\infty}{\longrightarrow} 0$ in $P_{f_0^X}$-probability for all $\varepsilon>0$.
It is also possible to define posterior consistency for quantities of interest such as the number of clusters. 
The posterior number of clusters $K_n$ is said to be consistent at $K_0$ if $\Pi(K_n=K_0\mid X_{1:n})\underset{n\to\infty}{\longrightarrow} 1$ in $P_{f_0^X}$-probability. 

%Contraction rate
A refinement in the study of posterior consistency is to evaluate the speed at which a posterior distribution concentrates around the true parameter. The quantity that measures this speed is called a posterior contraction rate. More formally, the parameter space $\Theta$ is supposed to be a metric space with a metric $d$.
A sequence $\varepsilon_n$ is a posterior contraction rate at the parameter $\theta_0$ with respect to the metric $d$ if for every $M_n \to \infty$, $\Pi(d(\theta,\theta_0)\geq M_n\varepsilon_n\mid X_{1:n})\underset{n\to\infty}{\longrightarrow} 0$ in $P_{\theta_0}$-probability.

For more details on posterior consistency or contraction rates, the reader could refer to \citet[][Chapters 6 to 9]{ghosal2017fundamentals}.

%%%%%%%%%% Inconsistency %%%%%%%%%%
\section{Inconsistency results} 
\label{sec:inconsistency}
In this section, we generalize the inconsistency results by \cite{miller2014inconsistency}. Under the context defined previously, \cite{miller2014inconsistency} states sufficient conditions that imply posterior inconsistency of the number of clusters and also proves that these conditions are satisfied for the Dirichlet process and Pitman--Yor process mixture models. For completeness, we first recall here this inconsistency result and then prove that it also applies to the different models introduced in Section \ref{sec:priors}.

\subsection{Inconsistency theorem of \cite{miller2014inconsistency}}
\label{sec:Th}
The central result of~\citet[Theorem~6]{miller2014inconsistency} is reproduced below as Theorem \ref{thm}. This result depends on two conditions which are discussed thereafter.

We start with some notations. 
For $A\in\A{k}$, we define $R_A = \bigcup_{i:|A_i|\geq2}A_i$, the union of all clusters except singletons.
For index $j\in R_A$, we define $B(A,j)$ as the ordered partition $B\in\A{k+1}$ obtained by removing $j$ from its cluster $A_\ell$ and creating a new singleton for it. Then $B_\ell = A_\ell\setminus \{j\}$, and $B_{k+1} = \{j\}$. Let $\Z_A := \{B(A,j): j\in R_A\}$, for $n>k\geq1$, we define
\[c_n(k) := \frac{1}{n} \max_{A\in\A{k}}\max_{B\in\Z_A} \frac{p(A)}{p(B)},\]
with the convention that $0/0 = 0$ and $y/0 = \infty$ for $y>0$.
\begin{Cond}
\label{cond:part}
    Assume $\limsup_{n\to \infty} c_n(k)<\infty$, given some particular $k\in \{1,2,\ldots\}$.
\end{Cond}
\noindent \cite{miller2014inconsistency} show that this condition holds for any $k\in \{1,2,\ldots\}$ for the Pitman--Yor process, and thus for the Dirichlet process.

The second condition, named Condition 4 in \cite{miller2014inconsistency}, controls the likelihood through the control of single-cluster marginals. 
The single-cluster marginal for cluster $i$ is $m(X_{A_i}) = \int_\Theta \left(\prod_{j\in A_i} f(X_j\mid\theta)\right) \pi(\theta)\mathrm{d}\theta$. %Given $c\in[0,\infty)$, we introduce 
%\[\varphi_k(x_{1:n},c) := \min_{A\in\A{k}} \frac{1}{n}\left|S_A(x_{1:n},c)\right|,\]
%where $S_A(x_{1:n},c)$ is the set of indices $j\in\{1,\ldots,n\}$ such that the part $A_\ell$ containing $j$ satisfies $m(x_{A_\ell})\leq cm(x_{A_\ell\setminus j})m(x_j)$, i.e. the set of observations for which the marginals of the new clusters obtained after taking out that observation and creating a new singleton cluster dominates the marginal of the original cluster up to a constant $c$. 
%\begin{Cond}
%\label{cond:data}
%    Given a sequence of random variables $X_1,X_2,\ldots\in\X$, and $k\geq1$, assume
%    $\sup_{c\in[0,\infty)} \liminf_{n\to\infty} \varphi_k(X_{1:n},c)>0 \;\text{ a.s.}$
%\end{Cond}
This condition induces, for example, that as $n\to \infty$, there is always a non-vanishing proportion of the observations for which creating a singleton cluster increases its cluster marginal. % à modifier ? 
This condition only involves the data distribution and is shown to hold for several discrete and continuous distributions, such as the exponential family \citep[see Theorem 7 in][]{miller2014inconsistency}. 
In the following, we assume that this condition is satisfied and mainly focus on Condition \ref{cond:part}.

\begin{Thm}[\citealp{miller2014inconsistency}]
\label{thm}
    Let $X_1,X_2,\ldots\in\X$ be a sequence of random variables and consider a partition-based model. Then, if Condition 4 from \cite{miller2014inconsistency} holds, and Condition~\ref{cond:part} above holds for any $k\geq1$, we have for any $k\geq1$
    \[\limsup_{n\to \infty} \Pi(K_n = k\mid X_{1:n})<1 \; \text{ with probability } 1. \]
\end{Thm}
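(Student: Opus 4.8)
The plan is to argue directly on the posterior over ordered partitions,
\[
\Pi(A\mid X_{1:n}) = \frac{1}{Z_n}\, w(A), \qquad w(A) := p(A)\prod_{i=1}^{|A|} m(X_{A_i}),
\]
so that $\Pi(K_n=k\mid X_{1:n}) = Z_n^{-1}\sum_{A\in\A{k}} w(A)$. To show the posterior cannot place all its mass on exactly $k$ clusters, I would prove that there is a constant $\delta>0$ (depending on $k$) such that, with probability one and for $n$ large, $\Pi(K_n=k+1\mid X_{1:n}) \ge (\delta/k)\,\Pi(K_n=k\mid X_{1:n})$. Since these two probabilities sum to at most one, the inequality forces $\Pi(K_n=k\mid X_{1:n}) \le k/(k+\delta) < 1$ eventually, which yields the claimed $\limsup$.

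The comparison between levels $k$ and $k+1$ is built from the singleton-creation map $A\mapsto B(A,j)$, $j\in R_A$. For $j$ lying in cluster $A_\ell$, splitting off $\{j\}$ changes only two single-cluster marginals, so
\[
\frac{w(B(A,j))}{w(A)} = \frac{p(B(A,j))}{p(A)}\cdot\frac{m(X_j)\,m(X_{A_\ell\setminus\{j\}})}{m(X_{A_\ell})}.
\]
Condition~\ref{cond:part} controls the first factor: by the definition of $c_n(k)$ one has $p(A)/p(B(A,j)) \le n\,c_n(k)$, so $p(B(A,j))/p(A) \ge 1/(n\,c_n(k))$, and $c_n(k)\le C$ for some finite $C$ and all large $n$. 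In the form I need, Condition~4 controls the second factor: it supplies a set $E$ with $P(X_1\in E)>0$ and a constant $b>0$ such that, whenever $X_j\in E$, the gain $m(X_j)m(X_{S\setminus\{j\}})/m(X_S) \ge b$ uniformly over the cluster content $S\ni j$. Because any $A\in\A{k}$ has at most $k$ singleton blocks, $R_A$ discards at most $k$ indices, whence $\#\{j\in R_A: X_j\in E\} \ge \#\{j\le n: X_j\in E\} - k$; by the strong law this is at least $\tfrac12 n\,P(X_1\in E)$ for all $A$ simultaneously, eventually almost surely. Keeping only these $E$-indices and discarding the remaining nonnegative terms gives $\sum_{j\in R_A} w(B(A,j)) \ge \delta\, w(A)$, with $\delta := b\,P(X_1\in E)/(2C)$, uniformly in $A$.

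It remains to sum over $A\in\A{k}$. The map $(A,j)\mapsto B(A,j)$ is at most $k$-to-one: a partition $B\in\A{k+1}$ can arise only if its last block is a singleton $\{j\}$, and then its preimages correspond to merging $\{j\}$ into one of the $k$ remaining blocks. Hence $\sum_{A\in\A{k}}\sum_{j\in R_A} w(B(A,j)) \le k\sum_{B\in\A{k+1}} w(B)$, and combining this with the per-partition bound yields $\sum_{\A{k+1}} w \ge (\delta/k)\sum_{\A{k}} w$, i.e. the ratio inequality announced above. Throughout, the conventions $0/0=0$ and $y/0=\infty$ handle vanishing marginals, and the partitions carrying strictly positive posterior weight are exactly those whose marginals are positive.

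The main obstacle is the second factor: converting Condition~4 into a lower bound on the total marginal gain that is valid for \emph{every} $A\in\A{k}$ at once and holds almost surely. The delicate point is that the gain $m(X_j)m(X_{A_\ell\setminus\{j\}})/m(X_{A_\ell})$ depends on the entire cluster $A_\ell$, not merely on $X_j$, so Condition~4 must be invoked in its uniform-in-$S$ form; and the passage from a per-observation statement to one holding over all partitions simultaneously rests on the elementary but essential fact that $R_A$ omits at most $k$ of the $n$ points, combined with a law-of-large-numbers count of how many observations fall into $E$.
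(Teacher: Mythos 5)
Your proof is correct and is essentially the argument of \cite{miller2014inconsistency}, whose Theorem~6 the paper cites rather than reproduces: the same splitting map $A\mapsto B(A,j)$, the same two-factor decomposition of $w(B(A,j))/w(A)$ controlled by Condition~\ref{cond:part} (prior ratio $\geq 1/(n\,c_n(k))$) and by the marginal condition, the same at-most-$k$-to-one multiplicity count over ordered partitions, and the same comparison of posterior mass at $k$ and $k+1$ clusters yielding $\Pi(K_n=k\mid X_{1:n})\leq k/(k+\delta)$ eventually almost surely. The only cosmetic difference is your rendering of Condition~4 through a fixed set $E$ plus the strong law of large numbers; in \cite{miller2014inconsistency} that condition is stated directly as the almost-sure assertion that a positive fraction of indices $j\leq n$ have split gain $m(X_j)m(X_{S\setminus\{j\}})/m(X_S)$ bounded below uniformly over subsets $S\subseteq\{1,\ldots,n\}$ containing $j$, which is precisely the linear-in-$n$ count of good indices your argument requires, so the substitution is harmless.
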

As said previously, Condition~\ref{cond:part}  is only related to partition distribution, while Condition 4 from \cite{miller2014inconsistency} only involves the data distribution and single-cluster marginals. Hence, to generalize this inconsistency result to other processes, it is enough to show that Condition \ref{cond:part} also holds for these different processes. This is the focus of the next section, for Gibbs-type processes and finite-dimensional discrete priors.

\subsection{Inconsistency of Gibbs-type and multinomial processes}\label{sec:incons-gibbs}

We extend the inconsistency result for all the processes introduced in Section \ref{sec:priors} by proving that Condition \ref{cond:part} holds.
\begin{Prop}[Gibbs-type processes]
    \label{prop:Gibbs}
    Consider a Gibbs-type process with $0 \leq\sigma<1$, then Condition \ref{cond:part} holds for any $k\in\{1,2,\ldots\}$, and so does the inconsistency of Theorem \ref{thm}.
\end{Prop}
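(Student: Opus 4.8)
The plan is to verify Condition~\ref{cond:part} directly from the Gibbs-type EPPF~\eqref{eq:EPPF}. This reduces the whole statement, via Theorem~\ref{thm} with Condition~4 assumed, to a single asymptotic estimate on the ratio $V_{n,k}/V_{n,k+1}$, which is the only process-dependent ingredient in $c_n(k)$.

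First I would compute $p(A)/p(B)$ explicitly. Fix $A\in\A{k}$ with block sizes $n_1,\dots,n_k$ and let $B=B(A,j)\in\Z_A$ be obtained by detaching $j$ from a non-singleton block $A_\ell$. Since $B\in\mathcal A_{k+1}(n)$ has block sizes $n_1,\dots,n_\ell-1,\dots,n_k,1$ and $(1-\sigma)_0=1$, formula~\eqref{eq:EPPF} gives
\[\frac{p(A)}{p(B)}=(k+1)\,(n_\ell-1-\sigma)\,\frac{V_{n,k}}{V_{n,k+1}},\]
because all factorial factors cancel except $(1-\sigma)_{n_\ell-1}/(1-\sigma)_{n_\ell-2}=n_\ell-1-\sigma$ and $(k+1)!/k!=k+1$. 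The $V$-factors do not depend on the partition, so maximizing over $A$ and $B\in\Z_A$ amounts to maximizing $n_\ell$; the largest admissible non-singleton block has size $n-k+1$ (all other blocks singletons), whence
\[c_n(k)=\frac{(k+1)(n-k-\sigma)}{n}\,\frac{V_{n,k}}{V_{n,k+1}}.\]
Since $(k+1)(n-k-\sigma)/n\to k+1$, Condition~\ref{cond:part} holds for every $k$ as soon as $\limsup_{n\to\infty}V_{n,k}/V_{n,k+1}<\infty$. For the parametric subfamilies this is immediate: the Dirichlet process gives $V_{n,k}/V_{n,k+1}=1/\alpha$ and the Pitman--Yor process gives $1/(\alpha+k\sigma)$, both constant in $n$, which already settles the $\sigma=0$ case. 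The substance is the general Gibbs-type prior.

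The main step is therefore the asymptotics of $V_{n,k}/V_{n,k+1}$, for which I would use the integral representation~\eqref{eq:Vnk2}. For large $n$ the inner integral over $p$ concentrates near $p=1$; substituting $u=1-p$ and using the defining Laplace transform of the positive $\sigma$-stable law, $\int_0^\infty \e^{-\lambda w}f_\sigma(w)\,\mathrm{d}w=\e^{-\lambda^\sigma}$, yields $\int_0^1 p^{\,n-k\sigma-1}f_\sigma((1-p)t)\,\mathrm dp\approx t^{-1}\e^{-(n/t)^\sigma}$, so that
\[V_{n,k}\approx\frac{\sigma^{k}}{\Gamma(n-k\sigma)}\int_0^\infty t^{-k\sigma-1}h(t)\,\e^{-(n/t)^\sigma}\,\mathrm dt.\]
Rescaling $t=ny$ produces a factor $n^{-k\sigma}$ times $\int_0^\infty y^{-k\sigma-1}h(ny)\,\e^{-y^{-\sigma}}\,\mathrm dy$. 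Taking the ratio, the gamma quotient $\Gamma(n-(k+1)\sigma)/\Gamma(n-k\sigma)\sim n^{-\sigma}$ cancels the $n^{\sigma}$ produced by the rescaling, leaving
\[\frac{V_{n,k}}{V_{n,k+1}}\sim\frac1\sigma\,\frac{\int_0^\infty y^{-k\sigma-1}h(ny)\,\e^{-y^{-\sigma}}\,\mathrm dy}{\int_0^\infty y^{-(k+1)\sigma-1}h(ny)\,\e^{-y^{-\sigma}}\,\mathrm dy},\]
which is $\sigma^{-1}$ times the $\sigma$-th moment of $y$ under the probability measure proportional to $y^{-(k+1)\sigma-1}h(ny)\,\e^{-y^{-\sigma}}\,\mathrm dy$.

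It then remains to bound this moment uniformly in $n$, and this is where the main difficulty lies. The factor $\e^{-y^{-\sigma}}$ suppresses mass near $y=0$, while the decay of $y^{-(k+1)\sigma-1}h(ny)$ controls large $y$; together they keep the measure concentrated on a fixed compact subset of $(0,\infty)$, so that $\limsup_n V_{n,k}/V_{n,k+1}<\infty$ (consistently, this limit equals $1/(\alpha+k\sigma)$ for Pitman--Yor). Making the Laplace/Tauberian step and this uniform moment bound rigorous for an arbitrary admissible $h$ is the technical crux; an alternative is to invoke the known large-$n$ asymptotics of the Gibbs weights $V_{n,k}$ from the literature on Gibbs-type priors. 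Once $\limsup_n V_{n,k}/V_{n,k+1}<\infty$ is established, one has $\limsup_n c_n(k)\le(k+1)\limsup_n V_{n,k}/V_{n,k+1}<\infty$ for every $k$, so Condition~\ref{cond:part} holds and Theorem~\ref{thm} delivers the claimed inconsistency.
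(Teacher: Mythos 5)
Your opening reduction coincides exactly with the paper's: from \eqref{eq:EPPF} you get $p(A)/p(B)=(k+1)(n_\ell-1-\sigma)\,V_{n,k}/V_{n,k+1}$, so Condition~\ref{cond:part} is equivalent to $\limsup_n V_{n,k}/V_{n,k+1}<\infty$, and your explicit Dirichlet and Pitman--Yor computations are correct. Where you part ways with the paper is on this last, process-dependent step. You approximate the inner $p$-integral of \eqref{eq:Vnk2} by the stable Laplace transform, rescale $t=ny$, and reduce the claim to a uniform-in-$n$ bound on the $\sigma$-th moment of the probability measures $\mu_n(\mathrm{d}y)\propto y^{-(k+1)\sigma-1}h(ny)\,\e^{-y^{-\sigma}}\mathrm{d}y$. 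The paper instead combines the recurrence \eqref{eq:rec_Vnk} with a Laplace approximation at the mode $(t_n,p_n)$ of the integrand (following \cite{arbel2021approximating}), obtaining $V_{n+1,k+1}/V_{n,k}=\bigl(\sigma k+\varphi_h(t_n)\bigr)/\bigl(n+\varphi_h(t_n)-1\bigr)+o(1/n)$ with $\varphi_h(t)=-th'(t)/h(t)$, and concludes by a dichotomy on whether $\varphi_h(t_n)$ converges or diverges, getting the limits $1/(\sigma k)$ or $0$ in the two cases.

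The genuine gap is that you never prove the uniform moment bound --- you explicitly flag it as the ``technical crux'' --- and the heuristic you offer in its place (that $\mu_n$ stays concentrated on a fixed compact subset of $(0,\infty)$) is false for admissible $h$, so the argument cannot be completed as written. Recall that admissibility means $\gamma=hf_\sigma$ is a probability density (the mixing law of the total mass in the Poisson--Kingman representation). First, for the NGG process one has $h(t)\propto\e^{\beta-\beta^{1/\sigma}t}$, so $h(ny)$ drives all the mass of $\mu_n$ towards $y=0$: the moment still vanishes, so boundedness survives, but not by your compactness argument. More seriously, take $h(t)= t^{\sigma}\log^{-2}(\e+t)$ (up to normalization), which is admissible since $hf_\sigma\asymp t^{-1}\log^{-2}t$ at infinity. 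For $k=1$, within your own asymptotics, the numerator $\int_0^\infty y^{-\sigma-1}h(ny)\,\e^{-y^{-\sigma}}\mathrm{d}y\asymp n^{\sigma}/\log n$ is dominated by the region $y\gg 1$, while the denominator $\int_0^\infty y^{-2\sigma-1}h(ny)\,\e^{-y^{-\sigma}}\mathrm{d}y\asymp n^{\sigma}/\log^{2}n$ comes from $y\asymp 1$; hence the $\sigma$-moment, and with it $V_{n,1}/V_{n,2}$, grows like $\log n$. (This is corroborated by the $\sigma$-diversity interpretation: this mixing law puts prior mass $\asymp 1/\log n$ on $K_n=1$ but only $\asymp 1/\log^{2}n$ on $K_n=2$.) So the missing step is not a routine technicality: without additional regularity or tail conditions on $h$ the bound you need is simply not true, and your route stalls precisely where the substance of the proposition lies. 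It is worth noting that this example corresponds to $\varphi_h(t)\to-\sigma=-\sigma k$ for $k=1$, which is also the degenerate value at which the paper's own dichotomy gives no finite positive limit; the paper's proof sidesteps such cases only through the regularity implicitly assumed in its Laplace-approximation step, whereas your write-up leaves the issue fully exposed.
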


\begin{Prop}[Multinomial processes]
    \label{prop:PYM}
    Consider any of the following priors: %\begin{enumerate}[label=(\alph*)]
    Dirichlet multinomial process,
    Pitman--Yor multinomial process and
    normalized generalized gamma multinomial process, with $K$ components.
    %\end{enumerate}
    Then Condition \ref{cond:part} holds for any  $k<\min(n,K)$, and so does the inconsistency of Theorem \ref{thm}.
\end{Prop}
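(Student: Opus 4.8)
The plan is to apply Theorem~\ref{thm}: Condition~4 of \cite{miller2014inconsistency} is assumed and depends only on the data, so it suffices to verify Condition~\ref{cond:part}, namely $\limsup_{n\to\infty}c_n(k)<\infty$ for every fixed $k$ with $k<K$. Fix $A\in\A{k}$ possessing a non-singleton block $A_\ell$ (which exists as soon as $k<n$), pick $j\in A_\ell$ and set $B=B(A,j)\in\A{k+1}$. Each of the three partition distributions \eqref{eq:EPPF_DPM}, \eqref{eq:EPPF_PYM}, \eqref{eq:EPPF_NGGM} depends on the partition only through the block sizes and the number of blocks, so passing from $A$ to $B$ changes exactly two things: the prefactor $\binom{K}{k}$ becomes $\binom{K}{k+1}$, producing the ratio $\binom{K}{k}/\binom{K}{k+1}=(k+1)/(K-k)$, finite precisely because $k<K$; and the block of size $n_\ell$ is replaced by a block of size $n_\ell-1$ together with a new singleton. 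Every global, size-independent factor (such as $1/(\alpha+1)_{n-1}$ for the Pitman--Yor multinomial process) cancels in $p(A)/p(B)$, so the problem reduces to tracking the block-dependent factor under splitting off a singleton.

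For the Dirichlet multinomial process this is immediate from \eqref{eq:EPPF_DPM}: using $(\alpha/K)_{n_\ell}=(\alpha/K)_{n_\ell-1}(\alpha/K+n_\ell-1)$ and $(\alpha/K)_1=\alpha/K$ gives $p(A)/p(B)=\frac{k+1}{K-k}\,(\alpha/K+n_\ell-1)/(\alpha/K)$, which is maximized by taking $n_\ell=n-k+1$. Hence $c_n(k)=\frac1n\frac{k+1}{K-k}(1+(n-k)K/\alpha)\to\frac{k+1}{K-k}\cdot\frac{K}{\alpha}<\infty$.

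For the Pitman--Yor and normalized generalized gamma multinomial processes the block-dependent factor is a sum over the table-count vectors $\ell^{(k)}$, so $p(A)/p(B)$ is a ratio of two such sums that are coupled through the total $|\ell^{(k)}|$. The key device is the recurrence for the generalized factorial coefficients \eqref{eq:Cnk}, $C(n_\ell,t;\sigma)=\sigma\,C(n_\ell-1,t-1;\sigma)+(n_\ell-1-\sigma t)\,C(n_\ell-1,t;\sigma)$. I would isolate the split block in the numerator sum, insert this recurrence, and reindex the first resulting summand ($t\mapsto t-1$); after using $C(1,1;\sigma)=\sigma$ for the new singleton, that summand reproduces the denominator sum exactly, up to an explicit constant ($\sigma/K$ for the Pitman--Yor, $1/K$ for the normalized generalized gamma case). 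The second summand carries the factor $(n_\ell-1-\sigma t)$ and is bounded term-by-term against the denominator after discarding its (harmless) negative contributions. For the Pitman--Yor multinomial process the comparison only involves the elementary ratio $\Gamma(\alpha/\sigma+j)/\Gamma(\alpha/\sigma+j+1)=1/(\alpha/\sigma+j)\le\sigma/\alpha$, which yields $p(A)/p(B)\le\frac{k+1}{K-k}(1+nK/\alpha)$ and hence the same finite limit as for the Dirichlet multinomial process. For the normalized generalized gamma multinomial process the same comparison instead produces the weight ratio $V_{n,j}/V_{n,j+1}$, with $V_{n,k}$ as in \eqref{eq:Vnk_NGG}.

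\emph{The main obstacle is the uniform control of these \textsc{NGG} weight ratios.} Because the split block may have size of order $n$, the index $j=|\ell^{(k)}|$ ranges over all of $\{k,\dots,n\}$, whereas Proposition~\ref{prop:Gibbs} only requires the single ratio $V_{n,k}/V_{n,k+1}$ at a fixed $k$. I would bridge this gap through the log-convexity of $k\mapsto V_{n,k}$, equivalently the monotonicity of $V_{n,k+1}/V_{n,k}$ in $k$ --- transparent for the Pitman--Yor weights, where $V_{n,k+1}/V_{n,k}=\alpha+k\sigma$, and the property I expect to have to establish for the \textsc{NGG} weights (either via log-convexity or by a direct estimate on \eqref{eq:Vnk_NGG}). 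Monotonicity forces $\sup_{j\ge k}V_{n,j}/V_{n,j+1}=V_{n,k}/V_{n,k+1}$, so the bound becomes $c_n(k)\le\frac{k+1}{K-k}(\frac1n+K\,V_{n,k}/V_{n,k+1})$; since the \textsc{NGG} multinomial process shares its weights with the infinite \textsc{NGG} process, $\limsup_n V_{n,k}/V_{n,k+1}<\infty$ is exactly what the proof of Proposition~\ref{prop:Gibbs} delivers. This establishes Condition~\ref{cond:part} for every $k<\min(n,K)$, and Theorem~\ref{thm} then yields inconsistency for all such $k$.
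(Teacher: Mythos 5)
Your overall strategy is the same as the paper's---verify Condition~\ref{cond:part} by cancelling the global factors and the binomial prefactor, then bound $\frac1n\,p(A)/p(B)$ by comparing the two sums over table counts---but your key device is genuinely different. Where the paper splits the numerator sum over $\ell_k$ into its first $n_k-1$ terms plus its last term (the $R_1+R_2$ decomposition) and bounds each piece using monotonicity of $t\mapsto C(n,t;\sigma)$, you apply the recurrence $C(n_k,t;\sigma)=\sigma C(n_k-1,t-1;\sigma)+(n_k-1-\sigma t)\,C(n_k-1,t;\sigma)$ to the split block, so that after reindexing the numerator decomposes \emph{exactly} as (denominator) $+$ (remainder), and the remainder is controlled term by term. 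This is cleaner and, importantly, more careful: it keeps explicit the fact that the numerator involves $C(n_k,\cdot\,;\sigma)$ while the denominator involves $C(n_k-1,\cdot\,;\sigma)$, a distinction that the paper's symbol $q_{\ell^{(k)}}$ conflates. Your Dirichlet multinomial and Pitman--Yor multinomial arguments are complete and correct: in the latter, all remainder terms are nonnegative (since $\ell_k\le n_k-1$ forces $n_k-1-\sigma\ell_k\ge0$), and the term-by-term ratio $(n_k-1-\sigma\ell_k)K/(\alpha+j\sigma)\le nK/\alpha$ gives exactly your stated bound.

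The gap is where you say it is, and it is a real one: for the \textsc{NGG} multinomial process your comparison produces ratios $V_{n,j}/V_{n,j+1}$ with $j=|\ell^{(k)}|$ ranging up to $n-1$, while Proposition~\ref{prop:Gibbs} only provides, for each \emph{fixed} $k$, boundedness of $(V_{n,k}/V_{n,k+1})_{n\ge1}$; without a bound uniform over growing $j$ the argument does not close, and your bridging claim---monotonicity of $j\mapsto V_{n,j}/V_{n,j+1}$, i.e.\ log-convexity of $j\mapsto V_{n,j}$---is proved only for the Pitman--Yor weights and left as an expectation for the \textsc{NGG} weights. (Note that the paper's own proof makes exactly this silent leap: its constant $B$ is the fixed-$k$ bound from Proposition~\ref{prop:Gibbs}, yet it is applied at indices $|\ell^{(k)}|$ that grow with $n$; you have correctly isolated a point the paper glosses over.) The missing lemma is in fact true and provable along the route you suggest: expanding $u^{n-1}=\bigl((u+\beta^{1/\sigma})-\beta^{1/\sigma}\bigr)^{n-1}$ and integrating term by term recovers the incomplete-gamma sum \eqref{eq:Vnk_NGG} from the representation
\[
V_{n,k}=\frac{\e^{\beta}}{\Gamma(n)}\int_0^\infty u^{n-1}(u+\beta^{1/\sigma})^{-n}\,\e^{-(u+\beta^{1/\sigma})^{\sigma}}\left[\sigma(u+\beta^{1/\sigma})^{\sigma}\right]^{k}\mathrm{d}u,
\]
so $V_{n,k}$ has the form $c_n\int w_n(u)\,\lambda(u)^k\,\mathrm{d}u$ with $w_n,\lambda\ge0$, and Cauchy--Schwarz yields $V_{n,k+1}^2\le V_{n,k}V_{n,k+2}$. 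Hence $V_{n,j}/V_{n,j+1}$ is non-increasing in $j$, $\sup_{j\ge k}V_{n,j}/V_{n,j+1}=V_{n,k}/V_{n,k+1}$, and the fixed-$k$ bound of Proposition~\ref{prop:Gibbs} finishes your argument. Until that lemma is written down, however, the \textsc{NGGM} case of your proof is incomplete.
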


The proofs of Propositions \ref{prop:Gibbs} and  \ref{prop:PYM} are provided in Appendix~\ref{an:proof}. Note that although the Dirichlet multinomial process is a particular case of the Pitman--Yor multinomial process and the normalized generalized gamma multinomial process, we include it as a separate case in the statement as the proof for this case differs from the proofs for its generalizations.

More precisely, the proof \cite[as in][Proposition 5]{miller2014inconsistency} consists in controlling the ratio of probability \smash{$\frac{1}{n}~ p(A)/p(B)$}, where $B = B(A,j)$ is defined in Section \ref{sec:Th}. 
For the Gibbs-type process, as the ratio of probability is raised by \smash{$(V_{n,k}/V_{n,k+1})$}, it is enough to show that the sequence \smash{$(V_{n,k}/V_{n,k+1})_{n\geq1}$} is bounded. Since there is no simple formula for $V_{n,k}$ in the general case of the Gibbs-type process, we prove this using a Laplace approximation.
The idea of the original proof of \cite{miller2014inconsistency} is the same but this ratio simplifies as they consider Pitman--Yor process.

For the Pitman--Yor multinomial process and the \textsc{NGG} multinomial process, the partition distribution depends on a sum over the vectors $\ell^{(k)}=(\ell_1,\ldots,\ell_k)$ such that $\ell_i\in\{1,\ldots,n_i\}$ and $|\ell^{(k)}| = \ell_1+\cdots+\ell_k$. 
We write this sum as $k$ different sums over each $\ell_i$. As in the nonparametric case, we consider the ratio of probability \smash{$\frac{1}{n}~ p(A)/p(B)$}. By definition of partition $B$, if $j\in A_k$ then the sum over $\ell_k$ is different for $p(A)$ and $p(B)$, one is of $n_k$ elements and the other of $n_k-1$ elements. We separate the sum of $n_k$ elements into two sums, the first one of $n_k-1$ elements and the second one of one element. 
In this way, we can use some known properties of the generalized factorial coefficients and some specific properties of each process to conclude.

% \textbf{Figure 2 here}
% \begin{comment}
\begin{figure}[ht]
    \centering
    \begin{tabular}{cccc}
        \includegraphics[width = 0.26\textwidth]{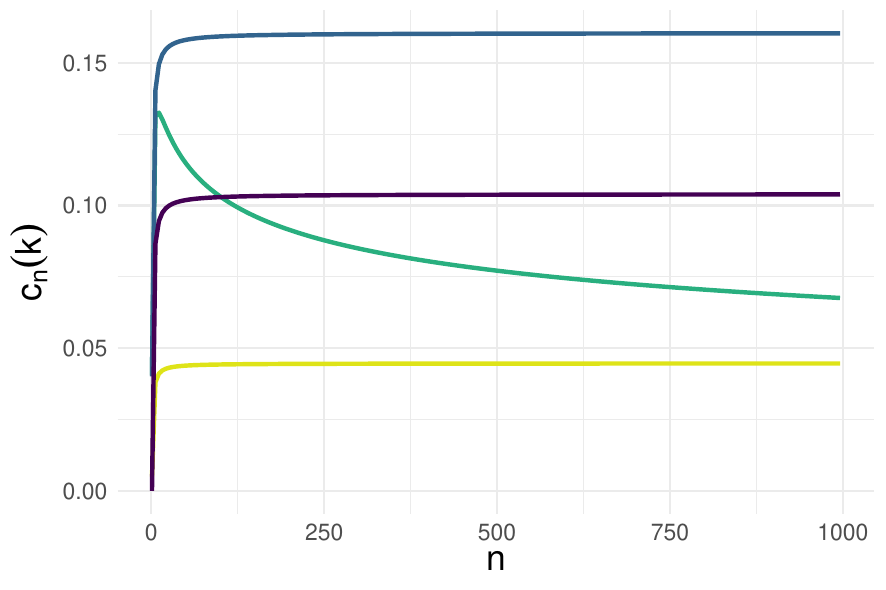} & \includegraphics[width = 0.26\textwidth]{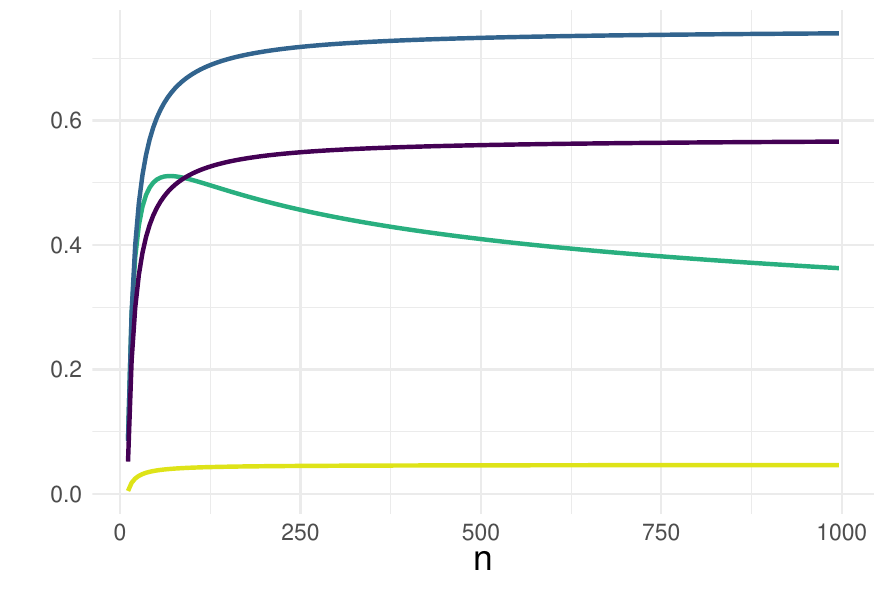} & \includegraphics[width = 0.26\textwidth]{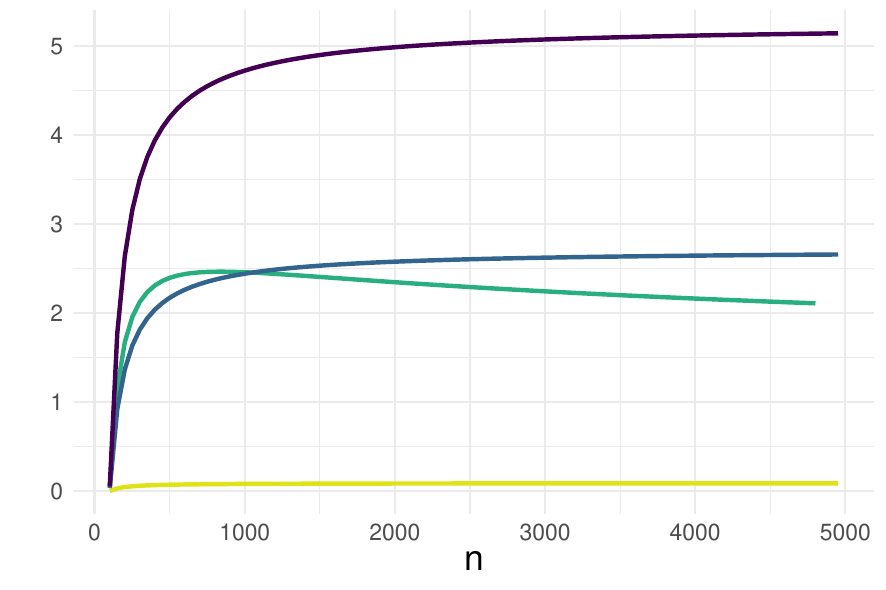}& \includegraphics[width = 0.1\textwidth]{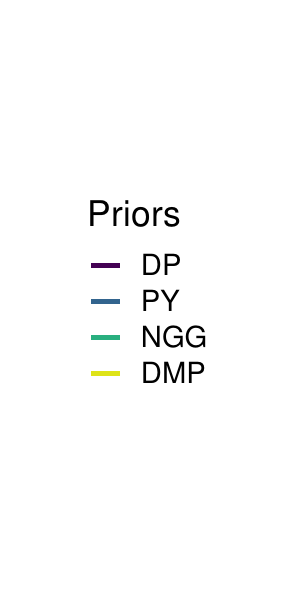}\\
        (a) $k=1$ & (b) $k=10$ & (c) $k=100$ \\
        \hspace{0.1cm}
        \includegraphics[width = 0.26\textwidth]{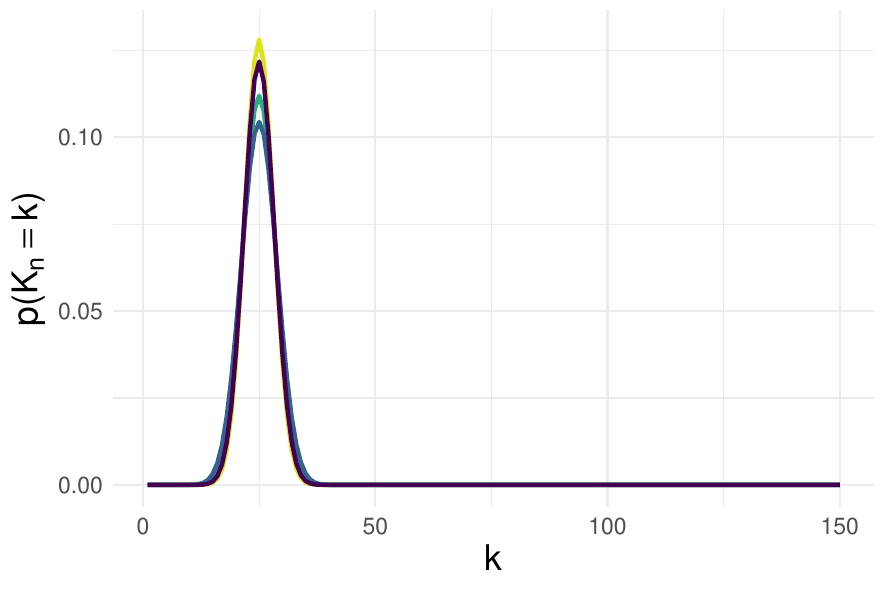} & \includegraphics[width = 0.26\textwidth]{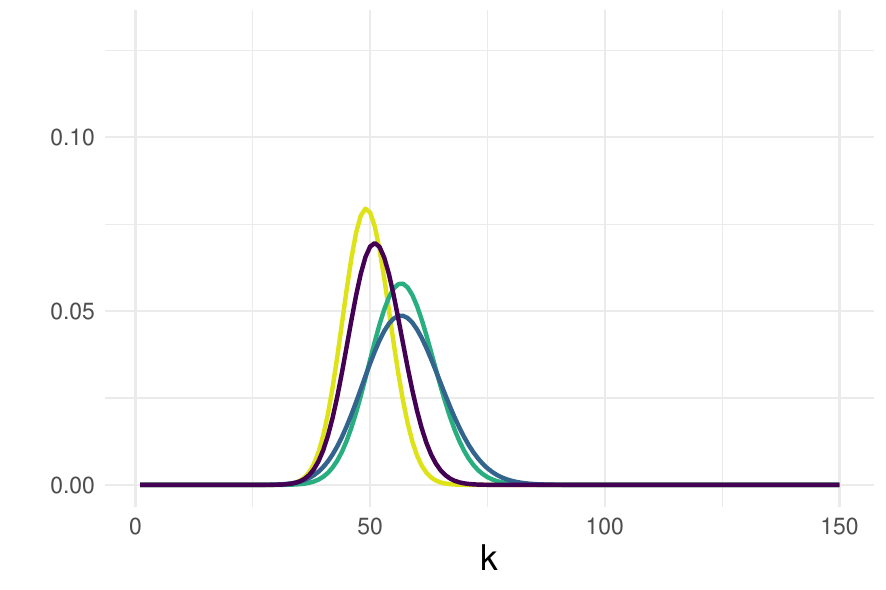} & \includegraphics[width = 0.26\textwidth]{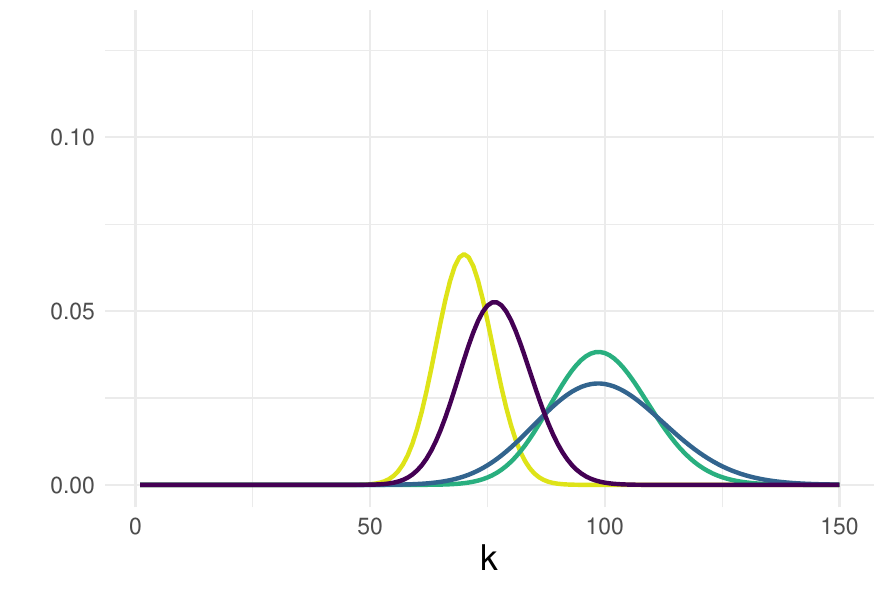} & \includegraphics[width = 0.1\textwidth]{figures/fig2.pdf}\\
        (d) $n=50$ & (e) $n=250$ & (f) $n=1000$
    \end{tabular}
    \caption{(\textit{Top row}) Illustrations of Condition~\ref{cond:part}, for $k\in\{1,10,100\}$: the function $n\mapsto c_n(k)$ reaches a plateau for large values of $n$ for a range of priors (see infra).\\
    (\textit{Bottom row}) Prior probability on the number of clusters for different processes and different values of $n$. In both rows, parameters are fixed such that $\mathbb{E}[K_{50}] = 25$: for Dirichlet process \textsc{DP}$(\alpha = 19.2)$, for Pitman--Yor process \textsc{PY}$(\sigma = 0.25, \alpha = 12.2)$, for \textsc{NGG} process \textsc{NGG}$(\sigma = 0.25, \beta = 48.4)$ and for Dirichlet multinomial process \textsc{DMP}$(\alpha = 22.5, K = 200)$. Illustrations are made using the package \href{https://github.com/konkam/GibbsTypePriors}{\texttt{GibbsTypePriors}}.
    }
    \label{fig:cond1}
\end{figure}
% \end{comment}

The top row of Figure \ref{fig:cond1} illustrates Condition \ref{cond:part} for different partition distributions, such as the Dirichlet multinomial process (\textsc{DMP}), the Dirichlet process (\textsc{DP}), the Gibbs-type process for the normalized generalized gamma process (\textsc{NGG}) special case and the Pitman--Yor process (\textsc{PY}). In all these cases, we represent the function $c_n(k)$ defined in Section \ref{sec:inconsistency} for different values of $k$, $k\in\{1,10,100\}$, with $n\in\{1,\ldots,5000\}$ and for some fixed parameters chosen such that \smash{$\mathbb{E}[K_{50}] = 25$}. 
We draw all the priors we considered for this choice of the parameters in Figure \ref{fig:cond1} bottom row. We also illustrate how the priors vary depending on $n$, fixing the priors parameters such that $\mathbb{E}[K_{50}] = 25$ then we made $n$ varying, $n \in \{50,250,1000\}$.
In Figure \ref{fig:cond1} top row, we can see $n\mapsto c_n(k)$ function reaches a plateau, thus indicating its boundedness for every process and values of $k$.
%\textcolor{red}{Be more specific. What kind ? Some properties of the generalized factorial coefficients and some specific property of each process\ldots} properties to conclude, notably the non-increasing property of the function of the generalized factorial coefficients $k\mapsto C(n,k;\sigma)$. The properties used depend on whether we consider a Pitman--Yor multinomial process or a \textsc{NGG} multinomial process.

%%%%%%%%%% Consistency %%%%%%%%%%
\section{Consistency results}
\label{sec:consistency}
\textcolor{forestgreen}{The previous results imply that the posterior distribution of the number of clusters for some Bayesian nonparametric mixture models and some overfitted mixture models is inconsistent and thus do not provide good estimates for the number of components in a finite mixture.
In these cases, the posterior distribution of the number of clusters is not the relevant summary to consider.}
Instead, results by \cite{rousseau2011asymptotic, nguyenConvergenceLatentMixing2013, scricciolo2014adaptive} suggest that it might be better to focus on the \textcolor{forestgreen}{latent} mixing measure.
In particular, recent works on consistency can be extended to the models we consider. 
In this \textcolor{forestgreen}{section}, we consider the framework of \cite{rousseau2011asymptotic} \textcolor{forestgreen}{for overfitted mixtures} and investigate to which extent it might apply to some models we have been considering, the Dirichlet multinomial process and Pitman--Yor multinomial process mixture models.
\textcolor{forestgreen}{Moreover} \cite{guha2021posterior} introduce a post-processing procedure, the Merge-Truncate-Merge (MTM) algorithm, for which the output, the number of clusters, is consistent. \cite{guha2021posterior} proved that this algorithm can be applied to the Dirichlet process mixture model so that there is consistency for the number of clusters after applying this algorithm. 
We extend this result and prove that we can apply the algorithm to overfitted mixture models and the Pitman--Yor process mixture model. 
%, \textbf{as well as to Pitman--Yor multinomial mixtures and overfitted mixtures}. 
% Then, we use the work in \cite{rousseau2011asymptotic} to prove that the weights of extra clusters tend to zero for the Dirichlet multinomial process and Pitman--Yor multinomial process mixture models.

\subsection{Emptying extra clusters}\label{sec:mixing-weight}
Overfitted mixtures can be constructed based on the Dirichlet multinomial process or the Pitman--Yor multinomial process. 
\cite{rousseau2011asymptotic} show in their Theorem 1 that overfitted mixtures, under some conditions on the kernel and the mixture model, have the desirable property that in the mixing measure the weights of extra components tend to zero as the sample size grows. %\textcolor{red}{Say somehow that this is more related to the posterior consistency of the mixing measure than the consistency of the posterior number of clusters}
This result only concerns the weights and not the number of clusters, but a near-optimal posterior contraction rate for the mixing measure can be deduced from it \citep[see section 3.1 in][]{guha2021posterior}.
%% Détailler résultat du th 1 sans le réécrire
To be more precise, \cite{rousseau2011asymptotic} consider a prior $\pi$ on the mixture weights $w$ written as follows
\[\pi(w) = C(w) w_1^{\alpha_1-1}\cdots w_k^{\alpha_k-1},\]
with specific properties for the function  $C(w)$  recalled in Condition \ref{hyp:5}.
Two types of prior hyper-parameter configurations are studied, which lead to opposite conclusions: merging or emptying of extra components. 
Let $d$ be the dimension of the component-specific parameter $\theta$. % d à expliquer mieux
If $\alpha_\mathrm{max} = \max_j(\alpha_j)$ is such that $\alpha_\mathrm{max}<d/2$, then the posterior expectation for the weights of the extra components tends to zero. This is the case where extra components are emptied. 
The other case corresponds to $\alpha_\mathrm{min}= \min_j(\alpha_j)>d/2$. In this case, the extra components are merged with non-negligible weight, which means that they become identical to an existing component and inadvertently borrow some of its weight. This case is less stable as there are different merging possibilities. 
It is therefore preferable to choose parameters of the prior that belong to the first case. 
The result stated in Theorem 1 in \cite{rousseau2011asymptotic}, depends on five conditions. The first one, Condition \ref{hyp:1} below, is a posterior contraction condition on the mixture density. The following three conditions, Condition~\ref{hyp:2}, Condition~\ref{hyp:3}, and Condition~\ref{hyp:4} in Appendix \ref{an:bg}, are standard conditions on the kernel density, respectively on regularity, integrability, and strong identifiability. Finally, Condition \ref{hyp:5} below represents a classic continuity property for the prior density. 
More details on this result are provided in Appendix \ref{an:bg} where the assumptions on the kernel are recalled and Theorem 1 of \cite{rousseau2011asymptotic} is stated.

To apply Theorem 1 in \cite{rousseau2011asymptotic} to our case, as the kernel is not the focus of this article, we only need to check the conditions on the mixture model. We recall here these two conditions, Condition~\ref{hyp:1} and Condition~\ref{hyp:5}, which correspond to conditions respectively on the posterior contraction of the mixing measure and the prior density. 

\begin{Cond}[\citealp{rousseau2011asymptotic}, Assumption 1]
\label{hyp:1}
    There exists $\varepsilon_n \leq \log (n)^q/\sqrt{n}$, for some $q\geq0$, such that
    \[ \lim_{M\to\infty} \limsup_n \left\{ \mathbb{E}^n_0\left[\Pi(\|f^X-f_0^X\|_1\geq M\varepsilon_n\mid X_{1:n})\right]\right\} = 0, \]
    where $f_0^X$ is the true mixture density.
\end{Cond}

\begin{Cond}[\citealp{rousseau2011asymptotic}, Assumption 5]
\label{hyp:5}
    The prior density with respect to Lebesgue measure on the cluster-specific parameter $\theta$ is continuous and positive on $\Theta$, and the prior $\pi(w)$ on $w=(w_1,\ldots,w_K)$ satisfies
    \[\pi(w) = C(w) w_1^{\alpha_1-1}\cdots w_K^{\alpha_K-1},\]
    where $C(w)$ is a continuous function on the simplex bounded from above and from below by positive constants.
\end{Cond}

\begin{Prop}
\label{prop:over}
    Assume that the kernel considered satisfies Conditions \ref{hyp:2}, \ref{hyp:3}, and \ref{hyp:4} \textcolor{forestgreen}{(see Appendix \ref{an:bg})}. Let $G$ be a Dirichlet multinomial process. %or a Pitman--Yor multinomial process with parameter $\sigma = 1/2$.
    Then, Conditions \ref{hyp:1} and \ref{hyp:5} are satisfied, and Theorem 1 of \cite{rousseau2011asymptotic} holds.
\end{Prop}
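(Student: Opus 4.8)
The plan is to verify the two model-specific requirements, Condition~\ref{hyp:1} (posterior contraction of the mixture density at a near-parametric rate) and Condition~\ref{hyp:5} (the algebraic form of the prior on the weights), separately for each of the two priors, the remaining Conditions 2--4 of \cite{rousseau2011asymptotic} being assumed to hold for the kernel. Once both are in place, Theorem~1 of \cite{rousseau2011asymptotic} applies verbatim, the emptying-versus-merging dichotomy being decided by comparing $\bar\alpha=\min_j\alpha_j$ with $d/2$.

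For Condition~\ref{hyp:5} the Dirichlet multinomial process is immediate: its weights are by definition $\mathrm{Dir}(\alpha/K,\ldots,\alpha/K)$, so their density is exactly $C\prod_j w_j^{\alpha/K-1}$ with $C$ the constant Dirichlet normalising factor, giving $\alpha_j=\alpha/K$ and a trivially bounded $C$. The work is concentrated in the Pitman--Yor multinomial case. Here I would first write down the marginal density of the vector $w=(w_1,\ldots,w_K)$ of masses that a $\textsc{PY}(\sigma,\alpha;G_{0,K})$ measure places on the $K$ fixed atoms of $G_{0,K}$. Representing the Pitman--Yor law as a polynomial tilt (by $T^{-\alpha}$, where $T$ is the total mass) of the normalised $\sigma$-stable measure over the atomic base, the unnormalised masses at the $K$ atoms are, before tilting, independent scaled positive $\sigma$-stable variables; the choice $\sigma=1/2$ is precisely what makes this tractable, since the $1/2$-stable density is explicit (it is $\propto s^{-3/2}\mathrm{e}^{-c/s}$). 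Passing to the coordinates $(T,w)$ and integrating out $T$ against the tilt then yields a closed-form density for $w$. The remaining step is to rearrange this density as $C(w)\prod_j w_j^{\alpha_j-1}$, read off the exponents $\alpha_j$, and check that the residual factor $C(w)$ is continuous and bounded above and below by positive constants on the simplex.

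For Condition~\ref{hyp:1} I would exploit that both priors yield genuinely finite, $K$-component mixtures, so that density estimation is an essentially parametric problem for which the contraction rate is $\sqrt n$ up to logarithmic factors, matching the admissible rate $(\log n)^q/\sqrt n$. Concretely, I would invoke the general posterior contraction theorem (Ghosal--van der Vaart, as in \citealp{ghosal2017fundamentals}): the prior charges Kullback--Leibler neighbourhoods of $f_0^X$ with the required mass because the weight density is strictly positive on the interior of the simplex and $\pi(\theta)$ is continuous and positive on $\Theta$, while the entropy and sieve bounds follow from the regularity, integrability and strong identifiability assumed in Conditions 2--4 on the kernel. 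Some care is needed because in the overfitted regime $f_0^X$ lies on the boundary of the parameter space (several weights vanish), but this affects only constants and leaves the near-parametric rate, hence Condition~\ref{hyp:1}, intact.

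The main obstacle is the second step of the Pitman--Yor computation: obtaining the weight density in closed form for $\sigma=1/2$ and, above all, verifying that the residual factor $C(w)$ stays bounded away from $0$ and $\infty$ \emph{uniformly over the whole simplex}, including its lower-dimensional faces. It is this global two-sided bound, rather than the mere single-coordinate behaviour that determines $\bar\alpha$, that Condition~\ref{hyp:5} demands, and it is also where the restriction $\sigma=1/2$ is genuinely used; the Dirichlet multinomial and contraction steps are routine by comparison.
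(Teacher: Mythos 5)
Your overall architecture matches the paper's: verify Condition~\ref{hyp:5} prior by prior, verify Condition~\ref{hyp:1} via a density-contraction result, then invoke Theorem~1 of \cite{rousseau2011asymptotic}. The Dirichlet multinomial step is exactly the paper's. For Condition~\ref{hyp:1}, the paper does not re-run the Ghosal--van der Vaart machinery as you propose; it simply cites Theorem~4.1 of \cite{rousseau2019bayesian}, noting that it applies to overfitted mixtures as well as exact-fitted ones. Your route is plausible but heavier, and your treatment of the boundary issue (``affects only constants'') is itself only asserted, not proved.

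The genuine gap is the Pitman--Yor multinomial step, and you flag it yourself: you never produce the closed-form density of the weights, never read off the exponents $\alpha_j$, and explicitly leave the two-sided bound on $C(w)$ as ``the main obstacle.'' That deferred step \emph{is} the mathematical content of the paper's proof. The paper does not re-derive the weight law by tilting the $1/2$-stable subordinator (your plan, which would work but is unnecessary); it cites \cite{carlton2002family}: the weights follow a ratio-stable law $RS(\sigma,\tilde{\alpha};1/K,\ldots,1/K)$ whose density has a closed form only at $\sigma=1/2$, namely $\pi(w)\propto \prod_j w_j^{-3/2}\,\bigl(\sum_j 1/(w_j K^2)\bigr)^{-(\tilde{\alpha}+K/2)}$ on the simplex. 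Rearranging gives $\alpha_1=\cdots=\alpha_K=\tilde{\alpha}+(K-1)/2$ and $C(w)\propto\bigl(\sum_{i=1}^K\prod_{j\neq i}w_j\bigr)^{-(\tilde{\alpha}+K/2)}$, which is continuous and positive on the simplex. Two consequences are out of reach without this computation: (i) the value $\bar{\alpha}=\tilde{\alpha}+(K-1)/2$ is what makes the emptying condition $\bar{\alpha}<d/2$ so restrictive for the Pitman--Yor multinomial process (it forces $K\leq 2$ when $d=2$), a point the paper emphasizes in Section~\ref{sec:mixing-weight}; and (ii) your worry about uniform two-sided bounds near lower-dimensional faces is substantive---with the explicit form one sees $C(w)$ is bounded below but can blow up where two or more coordinates vanish simultaneously, and indeed the paper's own display concludes only positivity of $C(w)$. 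So identifying the obstacle was correct, but since your proposal stops there, Condition~\ref{hyp:5} for the Pitman--Yor multinomial case, and hence the proposition, is not established by it.
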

The proof of this proposition can be found in Appendix \ref{an:prop4}. It relies on Theorem 4.1 from \cite{rousseau2019bayesian} through which Condition \ref{hyp:1} holds for mixture models based on the Dirichlet multinomial process.% or the Pitman--Yor multinomial process. 
This theorem gives a result on density consistency for finite mixture models in the exact setting, which remains true in the overfitted mixture case. 
The proof in Appendix \ref{an:prop4} consists mainly of proving that Condition \ref{hyp:5} holds for the different priors we consider.
% In the Pitman--Yor multinomial case, we are able to prove that Condition \ref{hyp:5} holds only for $\sigma = 1/2$. Indeed, $\sigma = 1/2$ is the only value for which the prior on the weights, a ratio-stable distribution, has a closed-form density. Therefore, it is interesting to choose $\sigma=1/2$ when using the Pitman--Yor multinomial process, as we want at least to be in the case where Proposition \ref{prop:over} applies. In this case, Theorem 1 from \cite{rousseau2011asymptotic} applies which ensures that the weights of extra components tend to zero when $\alpha_\mathrm{max}<d/2$. 

% However, note that the condition $\alpha_\mathrm{max}<d/2$ is more restrictive for the Pitman--Yor multinomial process with parameters $\alpha_\mathrm{max}$ and $\sigma = 1/2$, than for Dirichlet multinomial process with parameter $\alpha$. 
% Indeed, in the former case $\alpha_\mathrm{max} =\Tilde{\alpha}+ \frac{K-1}{2}$ (see proof in Section~\ref{an:prop4}), so condition $\Tilde{\alpha}+ \frac{K-1}{2}<d/2$ imposes a restriction on the choice of $K$ in addition to that on $\alpha_\mathrm{max}$. For example, if $d=2$ (e.g. 1D location-scale mixtures) then $K \leq 2$. This means that a Pitman--Yor multinomial model is likely to be in the merging regime,  $\alpha_\mathrm{min}>d/2$.
% Conversely, in the case of the Dirichlet multinomial process, there is no restriction on $K$. Thus, it is always possible to be in the first regime where $\alpha_\mathrm{max}<d/2$ and extra components are emptied. 

\textcolor{forestgreen}{
We have also studied the Pitman--Yor multinomial process, which is an interesting prior and a natural extension of the Dirichlet multinomial process. As an overfitted mixture, it could be expected that the result in \cite{rousseau2011asymptotic} would also apply to this prior.
However, even in the special case $\sigma=\frac{1}{2}$ where a prior density for the weights is available in closed form, it can be proven that Condition \ref{hyp:5} will never be satisfied. More precisely, there exists no $\alpha_1,\ldots,\alpha_K$ such that the function $C(w)$ defined in Condition~\ref{hyp:5} is bounded from above and below by positive constants. Hence, the \cite{rousseau2011asymptotic} framework cannot provide any guarantee for the Pitman--Yor multinomial process. Refer to Appendix~\ref{an:prop4} for a detailed description.
} 

\subsection{Merge-Truncate-Merge \textcolor{forestgreen}{algorithm}}\label{sec:MTM}
We assume throughout this section as in \cite{guha2021posterior} that the parameter space $\Theta$ is compact. 
We denote by $W_r(\cdot,\cdot)$ the Wasserstein distance of order $r$, $r\geq1$. 
We recall in Theorem \ref{thm:cons} the following result by \cite{guha2021posterior}.
\begin{Thm}[\citealp{guha2021posterior}, Theorem 3.2.]
\label{thm:cons}
    Let $G$ be a posterior sample from the posterior distribution of any Bayesian procedure, namely $\Pi(\cdot\mid X_{1:n})$ such that for all $\delta>0$ 
    \[\Pi\left(G\,:\, W_r(G,G_0)\leq\delta\omega_n\mid X_{1:n}\right)\overset{p_{G_0}}{\longrightarrow} 1,\]
    with $\omega_n=o(1)$ a vanishing rate, $r\geq1$.
    Let $\Tilde{G}$ and $\Tilde{K}$ be the outcome of the Merge-Truncate-Merge algorithm \citep[][]{guha2021posterior} applied to $G$. Then the following holds as $n\to \infty$: %for an arbitrary constant $c>0$ is such that
    \begin{enumerate}[label=(\alph*)]
        \item $\Pi(\Tilde{K}=K_0\mid X_{1:n})\longrightarrow 1$ in $P_{G_0}$-probability.
        \item For all $\delta>0$, $\Pi(G\,:\, W_r(\Tilde{G},G_0)\leq\delta\omega_n\mid X_{1:n})\longrightarrow1$ in $P_{G_0}$-probability.
    \end{enumerate}
\end{Thm}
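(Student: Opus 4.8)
Since the Merge--Truncate--Merge output $(\Tilde G,\Tilde K)$ is a deterministic function of the posterior draw $G$ together with the known inputs $\omega_n$ and the tuning constant, the whole argument splits into a deterministic analysis on a Wasserstein ``good event'' followed by a transfer to posterior probability. Fix $\delta>0$, write $G_0=\sum_{j=1}^{K_0}p_j^0\delta_{\theta_j^0}$, and set $E_n^\delta=\{G:W_r(G,G_0)\le\delta\omega_n\}$; by hypothesis $\Pi(E_n^\delta\mid X_{1:n})\to1$ in $P_{G_0}$-probability for every $\delta$. Because $G_0$ has $K_0$ distinct atoms in the compact space $\Theta$, the half-separation $\rho:=\tfrac12\min_{i\ne j}\|\theta_i^0-\theta_j^0\|$ and the minimal weight $\underline{p}:=\min_j p_j^0$ are strictly positive constants, independent of $n$; they set the scales that the algorithm's thresholds must respect. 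It therefore suffices to prove that, for each $\delta$ and all $n$ large, the deterministic event $E_n^\delta$ is contained in $\{\Tilde K=K_0\}\cap\{W_r(\Tilde G,G_0)\le C\delta\omega_n\}$ for a constant $C$ not depending on $\delta$; statement (a) then follows immediately, and statement (b) follows by replacing $\delta$ with $\delta'/C$ and using that $\delta'$ is arbitrary.

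The geometric core is the elementary transport bound: moving a mass $m$ through a distance at least $t$ costs at least $m\,t^r$ in $W_r^r$. Applying it on $E_n^\delta$ (once $\delta\omega_n<\rho$) to an optimal coupling of $G$ and $G_0$ gives three facts. (i) The total $G$-mass outside $\bigcup_j B(\theta_j^0,\rho)$ is at most $(\delta\omega_n/\rho)^r$. (ii) Inside each ball the $G$-mass at distance more than $t$ from $\theta_j^0$ is at most $(\delta\omega_n/t)^r$, so an atom of weight $p$ near $\theta_j^0$ sits within distance $\lesssim\delta\omega_n/p^{1/r}$. (iii) The total $G$-mass of each ball $B(\theta_j^0,\rho)$ converges to $p_j^0$. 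Hence, on $E_n^\delta$, the atoms of $G$ organise into $K_0$ heavy groups, each collapsing toward a distinct true atom, plus a residue of ``orphan'' atoms whose aggregate weight is $O((\delta\omega_n/\rho)^r)\to0$.

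I would then run the draw through the three stages, with thresholds calibrated strictly between the scale $\omega_n$ and the fixed scale $\rho$ (the merging radius being weight-dependent, of order $\omega_n/p^{1/r}$, precisely so that the dispersed low-weight atoms of a group can still be captured while the heavy atoms of distinct groups, separated by about $2\rho$, never are). The first Merge fuses within-group atoms without ever linking two groups; the Truncate step deletes the orphan atoms and the within-group atoms whose weight has fallen below the truncation level, each of which carries weight $o(1)$, while the $K_0$ group masses stay bounded below near $\underline{p}$; the final Merge absorbs any surviving close pair, leaving exactly $K_0$ atoms, one near each $\theta_j^0$ with weight tending to $p_j^0$. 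This yields $\Tilde K=K_0$ on $E_n^\delta$ for $n$ large, hence (a). For (b) I would bound $W_r(\Tilde G,G_0)\le W_r(\Tilde G,G)+W_r(G,G_0)$ and observe the decisive point: although the thresholds are calibrated to the fixed rate $\omega_n$, they only decide \emph{which} atoms are merged or removed, whereas the Wasserstein \emph{cost} of replacing a group by its centroid and of transporting the deleted mass to a surviving atom is governed by the \emph{actual} in-group spread and orphan mass, both controlled by facts (ii)--(iii) as $O(W_r(G,G_0))=O(\delta\omega_n)$. Thus $W_r(\Tilde G,G)\le C'\delta\omega_n$ with $C'$ free of $\delta$, and the triangle inequality gives the claimed bound.

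The step I expect to be the main obstacle is this simultaneous threshold calibration together with the cost control just described. A genuine group may hold atoms of vanishing weight lying comparatively far, at distance up to order $\omega_n/p^{1/r}$, from its centre, so the merging radius must be generous enough to reach them; yet it must stay below $\rho$ to avoid fusing distinct true atoms and must not engulf an orphan. Reconciling these opposing demands is exactly why the procedure brackets a truncation between two merges: the low-weight atoms too scattered to be merged safely are discarded rather than merged, and the final merge repairs the fragmentation this may cause. Turning this balance into explicit inequalities, and in particular verifying for (b) that the induced perturbation of $G$ is controlled by the true contraction radius $\delta\omega_n$ rather than by the coarser threshold $\omega_n$, is the delicate part; by contrast the probabilistic content is confined to the single transfer from the deterministic conclusion on $E_n^\delta$ to posterior probability via the contraction hypothesis.
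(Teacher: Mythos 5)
First, a point of reference: the paper does not prove Theorem~\ref{thm:cons} at all. It is recalled verbatim from \cite{guha2021posterior} (their Theorem~3.2) and used as a black box; the paper's own work in Section~\ref{sec:MTM} consists solely of verifying its hypothesis --- posterior contraction of the mixing measure at some vanishing rate $\omega_n$ --- for the Pitman--Yor process (Lemma~\ref{lem:Contract}, Proposition~\ref{prop:consPY}) and for overfitted mixtures (Proposition~\ref{prop:consO}). So your attempt is competing with the external proof of Guha et al., whose skeleton you do reproduce correctly: fix $\delta$, work deterministically on the good event $E_n^\delta=\{W_r(G,G_0)\le\delta\omega_n\}$, use the elementary bound that moving mass $m$ a distance $t$ costs at least $m\,t^r$ in $W_r^r$ to localize the atoms of $G$ around those of $G_0$, establish a containment $E_n^\delta\subset\{\tilde K=K_0\}\cap\{W_r(\tilde G,G_0)\le C\delta\omega_n\}$ with $C$ free of $\delta$, and transfer to posterior probability; your use of the ``for all $\delta>0$'' quantifier in the hypothesis, both to pick $\delta$ small in part (a) and to rescale $\delta$ in part (b), is exactly right.

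The genuine gap is the one you flag yourself: the ``delicate part'' you defer --- turning the threshold calibration into explicit inequalities --- \emph{is} the theorem, and what you sketch in its place is calibrated for a re-designed procedure rather than for the MTM algorithm the statement is about. In the actual algorithm the first merge uses the \emph{fixed} radius $\omega_n$ (not a weight-dependent radius of order $\omega_n/p^{1/r}$, which you attribute to it), the truncation removes atoms whose accumulated weight falls below a threshold of order $\omega_n^r$ involving the constant $c$, and it is the \emph{second}, post-truncation merge that carries the weight-sensitive, transport-cost criterion (merge when $q_j\,d^r\lesssim\omega_n^r$). This division of labor is what handles a configuration your sketch glosses over: on $E_n^\delta$, $G$ may place an atom of small but \emph{constant} weight $q\le(\delta/1.1)^r$ at distance $1.1\,\omega_n$ from a true atom $\theta_j^0$; it survives the first merge (too far from the heavy atom), survives any truncation at level $O(\omega_n^r)$ (its weight is constant in $n$), and is eliminated only by the final transport-cost merge, precisely because $q\,d^r=O(\delta^r\omega_n^r)$. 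Conversely, no final merge with a fixed radius $C\omega_n$ can work, since an atom of constant weight at distance $(C+1)\omega_n$ would always survive and give $\tilde K=K_0+1$. A correct proof must also track the constant $c$ --- your thresholds never involve it --- because the containment $E_n^\delta\subset\{\tilde K=K_0\}$ only holds once $\delta$ is small relative to $c^{1/r}$ and the separation $\rho$. In short: correct probabilistic reduction and correct geometric ingredients, but the deterministic core is missing, and as sketched it is not aligned with the algorithm it must analyze.
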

The Merge-Truncate-Merge algorithm is described in Appendix \ref{an:bg}.

\begin{Prop}[Pitman--Yor process]
    \label{prop:consPY}
    Let $G$ be a posterior sample from the posterior distribution of a Pitman--Yor process mixture. Under conditions of Lemma \ref{lem:Contract}, Theorem \ref{thm:cons} applies to $G$.
\end{Prop}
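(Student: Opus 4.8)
The plan is to exploit the conditional nature of Theorem~\ref{thm:cons}: it applies to \emph{any} posterior whose induced law on the mixing measure contracts around $G_0$ in the Wasserstein metric $W_r$ at some vanishing rate $\omega_n=o(1)$. Hence the whole content of the proposition reduces to verifying this single hypothesis for the posterior of a Pitman--Yor process mixture, after which the two conclusions follow by directly invoking Theorem~\ref{thm:cons}. Concretely, I must show that for some $r\ge1$ and some sequence $\omega_n\to0$,
\[
\Pi\!\left(G : W_r(G,G_0)\le \delta\,\omega_n \mid X_{1:n}\right)\overset{p_{G_0}}{\longrightarrow}1 \qquad \text{for every } \delta>0.
\]

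This statement is exactly what Lemma~\ref{lem:Contract} provides, so the proof consists in checking that the hypotheses of that lemma are in force and then quoting it. To justify the lemma itself, I would argue in two stages. First, I would invoke a posterior contraction rate $\varepsilon_n$ for the \emph{mixture density} $f^X$ in the Hellinger or $L_1$ metric; such rates are available for Pitman--Yor process mixtures under standard tail and smoothness assumptions on the kernel $f(\cdot\mid\theta)$ \citep{ghosal2007posterior,ghosal2017fundamentals}. Second, I would convert this density contraction into contraction of the mixing measure through a local inversion inequality of the form $\|f^X-f_0^X\|_1\gtrsim W_r(G,G_0)^{\kappa}$ for some $\kappa\ge1$, valid under the strong identifiability and compactness conditions of \citet{nguyenConvergenceLatentMixing2013}. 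Composing these two bounds yields the $W_r$-contraction with an explicit rate $\omega_n$, which is precisely what Lemma~\ref{lem:Contract} records.

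The main obstacle is not the final invocation of Theorem~\ref{thm:cons}, which is immediate, but the infinite-dimensional nature of the Pitman--Yor mixing measure: a posterior draw $G$ has infinitely many atoms almost surely, whereas the inversion inequality is classically stated for mixing measures supported on a bounded number of points. The delicate step is thus to control the contribution of the countably many low-weight atoms to the transport cost, ensuring that the mass lying outside any fixed finite support is negligible relative to $\omega_n$; here the compactness of $\Theta$ assumed throughout Section~\ref{sec:MTM} is essential, since it bounds the cost of transporting this residual mass. Once Lemma~\ref{lem:Contract} supplies the $W_r$-contraction at rate $\omega_n=o(1)$, the hypothesis of Theorem~\ref{thm:cons} is met and both conclusions---consistency $\Pi(\Tilde{K}=K_0\mid X_{1:n})\to1$ in $P_{G_0}$-probability and the $W_r$-contraction of $\Tilde{G}$---follow simultaneously.
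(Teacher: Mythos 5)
Your core argument is exactly the paper's proof: Theorem~\ref{thm:cons} only requires a vanishing Wasserstein contraction rate for the posterior mixing measure, Lemma~\ref{lem:Contract} supplies one (with $\omega_n = M(\log n)^{-1/\eta}$), and since the proposition takes the lemma's conditions as hypotheses, the two conclusions follow by quotation. That part is correct and essentially identical to what the paper does.

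Where you go beyond the paper is in the two-stage justification you sketch for Lemma~\ref{lem:Contract} itself. This is unnecessary for the proposition (the lemma is a separately proved result whose conditions are assumed here), and, more importantly, the sketch as stated would not go through. The inversion inequality $\|f^X-f_0^X\|_1\gtrsim W_r(G,G_0)^{\kappa}$ with a polynomial exponent $\kappa$ is a finite-support phenomenon, valid under the second-order identifiability conditions of \citet{nguyenConvergenceLatentMixing2013} and \citet{ho_strong_2016} when the number of atoms is bounded; it is the right tool for Proposition~\ref{prop:consO}, not here. Under the conditions of Lemma~\ref{lem:Contract}, the kernel is supersmooth ($|\hat f(t)|\sim e^{-(\rho|t|)^\eta}$), and for mixing measures with unrestricted (in particular, infinite) support the density-to-mixing-measure inversion then degrades to a logarithmic modulus: one only gets bounds of the form $W_r(G,G_0)\lesssim\left(\log(1/\|f^X-f_0^X\|_1)\right)^{-1/\eta}$. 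This is precisely why the lemma's rate is $(\log n)^{-1/\eta}$ rather than a power of $n$, and why the paper proves the lemma by directly invoking Corollary~1 of \cite{scricciolo2014adaptive}, which has this supersmooth inversion built in, instead of composing a density rate with a Nguyen-type polynomial inversion. Your observation about controlling the transport cost of the countably many low-weight atoms via compactness of $\Theta$ identifies a real difficulty, but compactness alone cannot restore a polynomial inversion exponent for this kernel class.
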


\begin{Prop}[Overfitted mixtures]
    \label{prop:consO}
    Let $G$ be a posterior sample from the posterior distribution of an overfitted mixture. Under conditions of second-order identifiability and uniform Lipschitz continuity of the kernel \citep{nguyenConvergenceLatentMixing2013, ho_strong_2016}, Theorem \ref{thm:cons} applies to $G$ with $r\leq2$.
\end{Prop}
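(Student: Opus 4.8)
The plan is to verify the single hypothesis of Theorem~\ref{thm:cons}, namely that the posterior distribution of the mixing measure contracts around $G_0$ in the Wasserstein distance $W_r$ at some vanishing rate $\omega_n$; once this is in place, conclusions (a) and (b) follow immediately from the Merge-Truncate-Merge guarantee. Since $\Theta$ is assumed compact and, on a bounded space, $W_r \leq W_{r'}$ whenever $r \leq r'$, it suffices to establish contraction in the strongest relevant metric $W_2$: contraction in every $W_r$ with $r \leq 2$ is then automatic, which is exactly the restriction appearing in the statement.

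First I would obtain posterior contraction of the \emph{mixture density} at a near-parametric rate $\tilde\varepsilon_n$ of order $(\log n)^q/\sqrt{n}$. In the overfitted, well-specified setting this is available from the same density-consistency machinery used for Proposition~\ref{prop:over}, i.e. Theorem~4.1 of \cite{rousseau2019bayesian}, which applies to finite mixtures in the exact setting and remains valid when extra (asymptotically empty) components are present. This yields $\Pi(\|f^X - f^X_0\|_1 \geq M\tilde\varepsilon_n \mid X_{1:n}) \to 0$ in $P_{f^X_0}$-probability.

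Next I would transfer this density contraction to a contraction of the mixing measure. The key ingredient is the inverse inequality of \cite{nguyenConvergenceLatentMixing2013}, sharpened by \cite{ho_strong_2016}: under second-order identifiability and uniform Lipschitz continuity of the kernel, there is a constant $c>0$ such that, for mixing measures supported on at most $K$ atoms in the compact set $\Theta$, one has $\|f^X - f^X_0\|_1 \geq c\, W_2^2(G,G_0)$ in a neighborhood of $G_0$. Because the overfitted model fixes a finite number of components $K \geq K_0$, every posterior draw $G$ carries at most $K$ atoms, so this bound applies across the posterior support. Combining it with the density rate gives $W_2(G,G_0) \lesssim \tilde\varepsilon_n^{1/2}$, hence posterior contraction in $W_2$ at rate $\omega_n \asymp (\log n)^{q/2}\, n^{-1/4} = o(1)$.

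The hard part will be ensuring the inverse inequality holds \emph{uniformly} over the posterior support rather than merely pointwise: the constant $c$ degenerates if an extra atom is allowed to drift off or to collide with a true atom, so one must combine the compactness of $\Theta$ with the fact that second-order identifiability controls precisely the quadratic collision behaviour of merging atoms. This is exactly why second-order identifiability — rather than first-order identifiability alone — is needed, and why the resulting control is on $W_2$ (producing the exponent $\tfrac12$ in the rate) instead of on $W_1$. With the uniform $W_2$ contraction established and $\omega_n = o(1)$, the premise of Theorem~\ref{thm:cons} is met and both consistency conclusions follow for every $r \leq 2$.
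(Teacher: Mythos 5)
Your proof is correct and follows essentially the same route as the paper: establish posterior contraction of the mixing measure in $W_2$ at a vanishing rate, then feed it into Theorem~\ref{thm:cons}, covering all $r\leq 2$ by monotonicity of Wasserstein distances. The only difference is one of packaging --- the paper obtains the $W_2$ rate $(\log n/n)^{1/4}$ by directly citing Equation~(5) of \cite{guha2021posterior}, whereas you re-derive it by combining near-parametric density contraction \citep{rousseau2019bayesian} with the second-order-identifiability inverse bound $\|f^X-f^X_0\|_1 \gtrsim W_2^2(G,G_0)$ of \cite{nguyenConvergenceLatentMixing2013,ho_strong_2016}; this derivation (including the uniformity issue you flag as the hard part) is precisely what underlies the equation the paper cites.
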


To prove Proposition \ref{prop:consPY}, we introduce a lemma which derives from Theorem 1 in \cite{scricciolo2014adaptive}.  
% These theorem and lemma apply in a less general framework, we now
We assume a location mixture with a known scale parameter $\tau_0$, as stated in Equation~\eqref{eq:loc_mix} in Appendix \ref{an:bg}. The location parameter is univariate, $\Theta\subset\mathbb{R}$.
There are three standard conditions, described in Appendix \ref{an:bg} as Condition \ref{hyp:A1}, Condition \ref{hyp:A2} and Condition \ref{hyp:A3}, for the theorem. Condition \ref{hyp:A1} is a condition on the kernel density, Condition \ref{hyp:A2} is a tail condition on the true mixing distribution, and Condition \ref{hyp:A3} is a condition on the base measure.
Theorem 1 from \cite{scricciolo2014adaptive} is also recalled in Appendix \ref{an:bg}.
To state the lemma, we also need a condition on the kernel $f(\cdot\mid\theta)$. We suppose that for some constants $0<\rho<\infty$ and $0<\eta\leq2$, the Fourier transform $\hat{f}$ of $f(\cdot\mid\theta)$ satisfies $|\hat{f}(t)|\sim e^{-\left(\rho|t|\right)^\eta}$.
\begin{Lem}
\label{lem:Contract}
Assuming the model is a location mixture as in \textcolor{forestgreen}{Equation~\eqref{eq:loc_mix}}, the scale parameter $\tau_0$ is known and $\Theta\subset\mathbb{R}$ is bounded.
Under Conditions \ref{hyp:A1}, \ref{hyp:A2}, and \ref{hyp:A3}, with $G$ the posterior mixing measure of a Pitman--Yor process mixture model, with $\sigma\in[0,1)$, then for every $1\leq r<\infty$, there exists a sufficiently large constant $M$ and some $0<\eta\leq2$ such that
\[\Pi(G:W_r(G,G_0)\geq M\,\textcolor{forestgreen}{\log (n)}^{-1/\eta}\mid X^{(n)})\to 0 \text{ in } P_{G_0}\text{-probability.}\]
\end{Lem}
The proof of this lemma can be found in Appendix \ref{an:prop4}. This lemma is similar to Corollary 2 from \cite{scricciolo2014adaptive} which applies to the special case of the Dirichlet process. With this lemma, we can now prove Proposition \ref{prop:consPY}.
\begin{proof}[Proof of Proposition \ref{prop:consPY}.]
    Theorem \ref{thm:cons} holds when the posterior $G$ is such that for all $\delta>0$, there exists a vanishing rate $\omega_n$ such that
    \[\Pi(G\,:\, W_r(G,G_0)\geq\delta\omega_n\mid X_{1:n})\longrightarrow 0 \text{ in } P_{G_0}\text{-probability}.\]
    Under the conditions of Lemma \ref{lem:Contract}, we have 
    \[\Pi(G:W_r(G,G_0)\geq M\,\textcolor{forestgreen}{\log (n)}^{-1/\eta}\mid X_{1:n})\to 0 \text{ in } P_{G_0}\text{-probability,}\]
    so that $\delta\omega_n = M\left(\textcolor{forestgreen}{\log (n)}\right)^{-1/\eta}$.
    
    Hence, the consistency results of Theorem \ref{thm:cons} hold for a Pitman--Yor process mixture model satisfying the conditions of Lemma \ref{lem:Contract}.
\end{proof}

In the case of Proposition \ref{prop:consO}, we also need a contraction rate for the mixing measure of overfitted mixture models. To ensure the existence of a contraction rate, two conditions on the kernel are required. 
These conditions are described in Appendix \ref{an:bg} as Condition~\ref{hyp:id} and Condition~\ref{hyp:lip}. 
Let $G$ be the mixing measure of any overfitted mixture model.
It is known that under some conditions on the kernel, there exists a rate of contraction for $G$ \citep[see Equation (5)][]{guha2021posterior},
\begin{equation}
    \label{eq:contractO}
    \Pi(G\,:\, W_2(G,G_0)\gtrsim \left(\textcolor{forestgreen}{\log (n)}/n\right)^{1/4}\mid X_{1:n})\longrightarrow 0 \text{ in } P_{G_0}\text{-probability}.
\end{equation}
This rate can be suboptimal for some overfitted mixture models but is sufficient to prove Proposition \ref{prop:consO}.

\begin{proof}[Proof of Proposition \ref{prop:consO}.]
    The proof of Theorem \ref{thm:cons} is the same in the case of overfitted mixtures as in the Bayesian nonparametric case. This theorem holds when the posterior $G$ is such that for all $\delta>0$, there exists a vanishing rate $\omega_n$ such that
    \[\Pi(G\,:\, W_r(G,G_0)\geq\delta\omega_n\mid X_{1:n})\longrightarrow 0 \text{ in }P_{G_0} \text{-probability}.\]
    We use Equation~\eqref{eq:contractO} to conclude with $\delta\omega_n \leq \left(\textcolor{forestgreen}{\log (n)}/n\right)^{1/4}$ and $r=2$.
    
    Hence, the consistency results of Theorem \ref{thm:cons} hold for a Pitman--Yor process mixture model satisfying the conditions of Lemma \ref{lem:Contract}.
\end{proof}

The work of \cite{guha2021posterior} can be applied to different Bayesian procedures. The only condition is to have a contraction rate for the mixing measure under the Wasserstein distance. 
However, this condition is not easy to prove, here we prove it for the Pitman--Yor process but there is no direct generalization for Gibbs-type processes. 
In the overfitted mixtures case, there is a general contraction rate for the mixing measure under the Wasserstein distance \citep[see][]{nguyenConvergenceLatentMixing2013,ho_strong_2016}. This rate could be suboptimal for some procedures as it is an upper bound but it guarantees the consistency of the Merge-Truncate-Merge algorithm.

%%%%%%%%%% Simulation study %%%%%%%%%%

\section{Simulation study}
\label{sec:simulation}
We consider a simulation study to illustrate the three parts of our theoretical results pertaining to (i) inconsistency of the posterior distribution of $\tilde K_n$ (Section~\ref{sec:incons-gibbs}), (ii) emptying of extra clusters (Section~\ref{sec:mixing-weight}), and (iii) the Merge-Truncate-Merge algorithm (Section~\ref{sec:MTM}). We study the familiar case of a Dirichlet multinomial mixture of multivariate normals. The simulated data was generated using a Gaussian location mixture, with a  parameter setting similar to the one of \cite{guha2021posterior} for the Dirichlet Process. 
More precisely, we have $K_0=3$ clusters and Gaussian kernels such that:
\begin{equation*}
    f_0^X(x) = \sum_{i=1}^3 w_i \mathcal{N}(x\mid \mu_i, \Sigma),
\end{equation*}
where $w= (w_1,w_2,w_3)$ are the weights, which we fix as $w= (0.5, 0.3, 0.2)$, and $N(x \mid \mu_i, \Sigma)$ is a multivariate Gaussian distribution with mean $\mu_i$ and covariance matrix $\Sigma$. We considered the following parameters for the mean and the covariance matrix:
\begin{equation*}
    \mu_1 = (0.8,0.8), \mu_2 =(0.8, -0.8) , \mu_3 =(-0.8, 0.8) \text{ and }\Sigma = 0.05I_{2}.
\end{equation*}
Here, the dimension of the kernel parameter $\theta = (\mu,\Sigma)$ is $d=5$ (2 for $\mu$ and 3 for $\Sigma$). 
In this setting, we generated a sequence of datasets with  $n=\{20, 200, 2000, 20000\}$, such that the smaller datasets are subsets of the larger ones. The number of components of the Dirichlet multinomial process is set to $K=10$, thus satisfying the overfitted condition $K\geq K_0$. 
 We chose the maximum parameter of the Dirichlet distribution, $\Bar{\alpha} = \alpha/K$, according to the intuition of \cite{rousseau2011asymptotic}  results. To obtain vanishing weights for extra components, the parameter  $\bar  \alpha$ should be less than $d/2=2.5$. We consider the following values: $\Bar{\alpha} \in \{0.01, 1, 2.5, 3\}$. 
We used the Markov chain Monte Carlo (MCMC) sampler proposed by \cite{malsiner-walli_model-based_2016}\footnote{The code is available at \href{https://github.com/dbystrova/BNPconsistency}{https://github.com/dbystrova/BNPconsistency}.}. Although the proposed algorithm allows us to use a hyperprior on the parameter $\alpha$ and shrinkage priors on the component means, we have used the basic version with standard priors on parameters. \textcolor{forestgreen}{See details on the number of iterations and simulation practical information in Appendix~\ref{an:simulation}}. Two situations are considered. In the first case, the prior expected number of clusters is fixed, which leads to decreasing parameter $\alpha$ at a rate asymptotically equivalent to $\log(n)^{-1}$.  In the second case, we introduce a prior distribution on $\Bar{\alpha}$.

\paragraph{Posterior inconsistency on $K_n$.} 
In Figure \ref{fig:DPM-prior-and-posterior}, we present the posterior distribution of the number of clusters for different values of parameter $\Bar{\alpha}$ and different sizes of the dataset $n$. In addition, we present the prior distribution on the number of clusters for the corresponding $\Bar{\alpha}$ and $n$. Table~\ref{Tab:prior_kn} summarizes the values of the parameters $\Bar{\alpha}$ and sample sizes $n$ used in the simulation study and displays the associated prior and posterior expected number of clusters $K_n$. 
%However, we can note that as $N$ is growing, the probability to obtain $K = 4$ is growing too. 
As proved in Proposition \ref{prop:PYM}, the posterior distribution diverges with $n$. %However, this divergence is only visible for the three parameter values $\Bar{\alpha} \in \{1, 2.5, 3\}$ in our experiments, while with $\Bar{\alpha} = 0.01$, the posterior distribution stays concentrated around the true value $K_0= 3$ for the range of sample sizes considered. 
This \textcolor{forestgreen}{lack of concentration} is visible for three of the considered values $\Bar{\alpha} \in \{1, 2.5, 3\}$ in our experiments. For $\Bar{\alpha} = 0.01$, the posterior distribution stays concentrated around the true value $K_0= 3$ for the range of sample sizes $n$.
Interestingly, Figure \ref{fig:DPM-prior-and-posterior} makes it clear that the prior with fixed $\Bar{\alpha}$ puts increasing mass towards $K_n=10$ as the sample size grows, which is probably one of the root causes for posterior inconsistency. 
Allowing $\Bar{\alpha}$ to vary, as investigated on Figure \ref{fig:DPM-varying-alpha}, induces a much less informative prior on the number of clusters and the posterior deterioration as the sample size grows appears much less striking.

% \textbf{Table 2 here}

\begin{table}[H]
 \caption{\label{Tab:prior_kn}
 Prior and posterior expected number of clusters $K_n$ for the various values of $\Bar{\alpha}$ considered in our experiments.}
\centering
\footnotesize
\begin{tabular}{ScScScScScScScScSc}
    \toprule
    \multirow{2}{*}[-0.3em]{$n$} &
    \multicolumn{4}{c}{Prior $\mathbb{E}[K_{n}]$} &
    \multicolumn{4}{c}{Posterior $\mathbb{E}[K_{n}\vert X_{1:n}]$} 
    \\
    \cmidrule(l){2-9}% \cmidrule(lr){4-4}\cmidrule(lr){5-5}
    {} &
    $\Bar{\alpha} =0.01$  &
    $\Bar{\alpha} =1$ &
    $\Bar{\alpha} =2.5$ &
    $\Bar{\alpha} =3$ 
    &
    $\Bar{\alpha} =0.01$  &
    $\Bar{\alpha} =1$ &
    $\Bar{\alpha} =2.5$ &
    $\Bar{\alpha} =3$
    \\ \midrule
    20 &
    1.3 &
    6.9 &
    7.9 &
   8
   &
    2.8 &
    4.9&
   5.9 &
   6.0
    \\ 
   200 &
   1.5 &
   9.6 &
   9.9 &
    9.98
    &
   3.04&
   6.9 &
   9.5 &
    9.7
    \\
    2000 &
   1.7 &
   9.9&
   $\approx$  10 &
    $\approx$ 10
    &
   3.07&
   8.1 &
   9.98 &
    9.99
    \\
     20000 &
   1.9 &
   9.99&
  $\approx$  10 &
   $\approx$  10
    &
   3.01&
   8.7 &
  $\approx$  10 &
   $\approx$  10
    \\ \bottomrule
\end{tabular}
\end{table}

\def\textlegendDPM{under a Dirichlet multinomial process mixture with fixed parameter $K=10$, and various choices of $\Bar{\alpha} = \alpha/K$ and $n$}

\paragraph{Emptying of extra clusters.} 
We are also interested to see how the posterior distribution of the component weights behaves in our simulation setting. Figure \ref{fig:DPM-weights} illustrates the posterior distribution of the weights of the components for different specifications of the parameter $\Bar{\alpha}$ and $n$, and is similar to Figure 1 and Figure 2 in \cite{rousseau2011asymptotic}. In our case, we sort the weights in decreasing order to 
alleviate the label-switching problem.  
For the minimal values of $\Bar{\alpha}=0.01$, we can see that the posterior weights with growing $n$ are concentrated at the true values of mixture weights, except the largest $n$. When $\Bar{\alpha}=1$, we can observe the concentration trend, but convergence is slower than in the first case. For $\Bar{\alpha}=2.5$ there are no clear dynamics. And for  $\Bar{\alpha}=3$  we can see that the weights become more uniformly distributed, which can be related to the merging weights regime. Specification of our simulation study does not allow to apply the \cite{rousseau2011asymptotic} theory directly, as in our case the support of $\theta$ is not bounded. However, we can see that the simulation results are still consistent with the theory, suggesting broader applicability.

\paragraph{Merge-Truncate-Merge.}
We applied the Merge-Truncate-Merge algorithm proposed by \cite{guha2021posterior} to the posterior distribution of the mixing measure in our simulation setting and illustrate the posterior distribution of the number of clusters $\Tilde{K}$ on Figure \ref{fig:DPM-MTM}. To use the Merge-Truncate-Merge algorithm, we need to know the Wasserstein convergence rates of the corresponding mixing measure. We use the convergence rate for overfitted mixtures  $\omega_n =  \left(\log(n)/n\right)^{1/4}$ \citep{guha2021posterior}. Note that for this convergence rate the prior on the kernel parameters should be bounded, which is not the case in our simulation (see details in Appendix \ref{an:simulation}), so as in the previous section, we apply Merge-Truncate-Merge out of its theoretically proven domain. The Merge-Truncate-Merge algorithm depends on the specification of a positive scalar $c$. As there is no explicit guideline for computing $c$, we tested a range of values $c \in \{0.1, 0.5, 1, 2\}$, see Figure~\ref{fig:DPM-MTM}. We can note that for each value of $n$, there exists some value of $c$ such that the posterior distribution of the number of clusters remains concentrated around the true number of components $K_0 =3$. At the same time, some values of $c$ are too restrictive or do not eliminate extra clusters. For example, $c=0.01$ for $\Bar{\alpha} =1$ does not allow the number of components to be correctly estimated. 
Conversely, too large a value of $c$ makes the Merge-Truncate-Merge algorithm also fail in the sense that it outputs zero values for $\tilde K$. This is \textcolor{forestgreen}{because the second step in the algorithm} truncates all clusters at once, \textcolor{forestgreen}{which corresponds to the case where the set $\mathcal{A}$ of the MTM algorithm recalled in Appendix \ref{an:bg} is empty and the set $\mathcal{N}$ contains everything}. 
This suggests interpreting $c$ as a regularization parameter, with the estimated number of clusters decreasing with increasing $c$. Following this intuition, we can draw (Figure \ref{fig:DPM-reg-path}) so-called ``regularization paths'' plots for $c$. More specifically, they represent the posterior mean and maximum a posteriori (MAP) for the posterior distribution of $\Tilde{K}$ for a range of values  $[0,c_\mathrm{max} ]$  for the parameter $c$, where $c_\mathrm{max}$ is defined as the value of $c$ for which the number of clusters given by the MTM algorithm $\tilde K$ is equal to $1$.  In other words, $c_\mathrm{max}$ coincides with the value where all the clusters have been merged or truncated by the MTM post-procedure into a single cluster. We can see that for all specifications of parameter $\Bar{\alpha}$ for large $n\geq 2000$, there always exists a region where the posterior mean and the MAP remain approximately constant (exactly constant for the MAP). 
This suggests a heuristic to use the Merge-Truncate-Merge algorithm: explore regularly spaced values in $[0, c_{\max}]$ and look for a plateau. In the absence of a plateau, the sample size should be increased.

%such that for $c_{\min}$ the estimated number of clusters is close to $K$, for $c_{\max}$ the estimated number of clusters is 0, then explore regularly spaced values in $[c_{\min}, c_{\max}]$ and look for a plateau. In the absence of a plateau, the sample size should be increased.

% \textbf{Figure 3 here}

\begin{figure}[H]
\centering
\begin{tabular}{cc}
  \includegraphics[width=.5\textwidth]{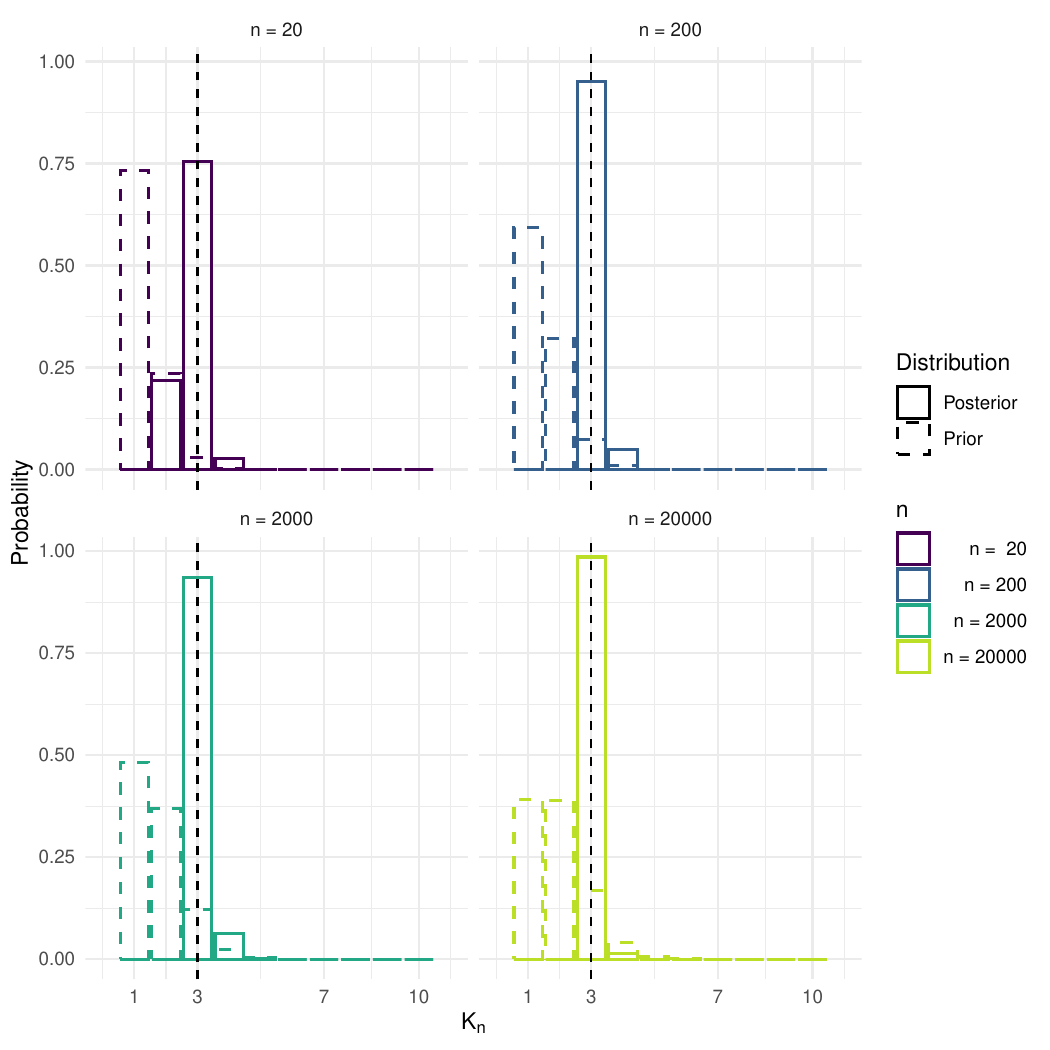} &   \includegraphics[width=.5\textwidth]{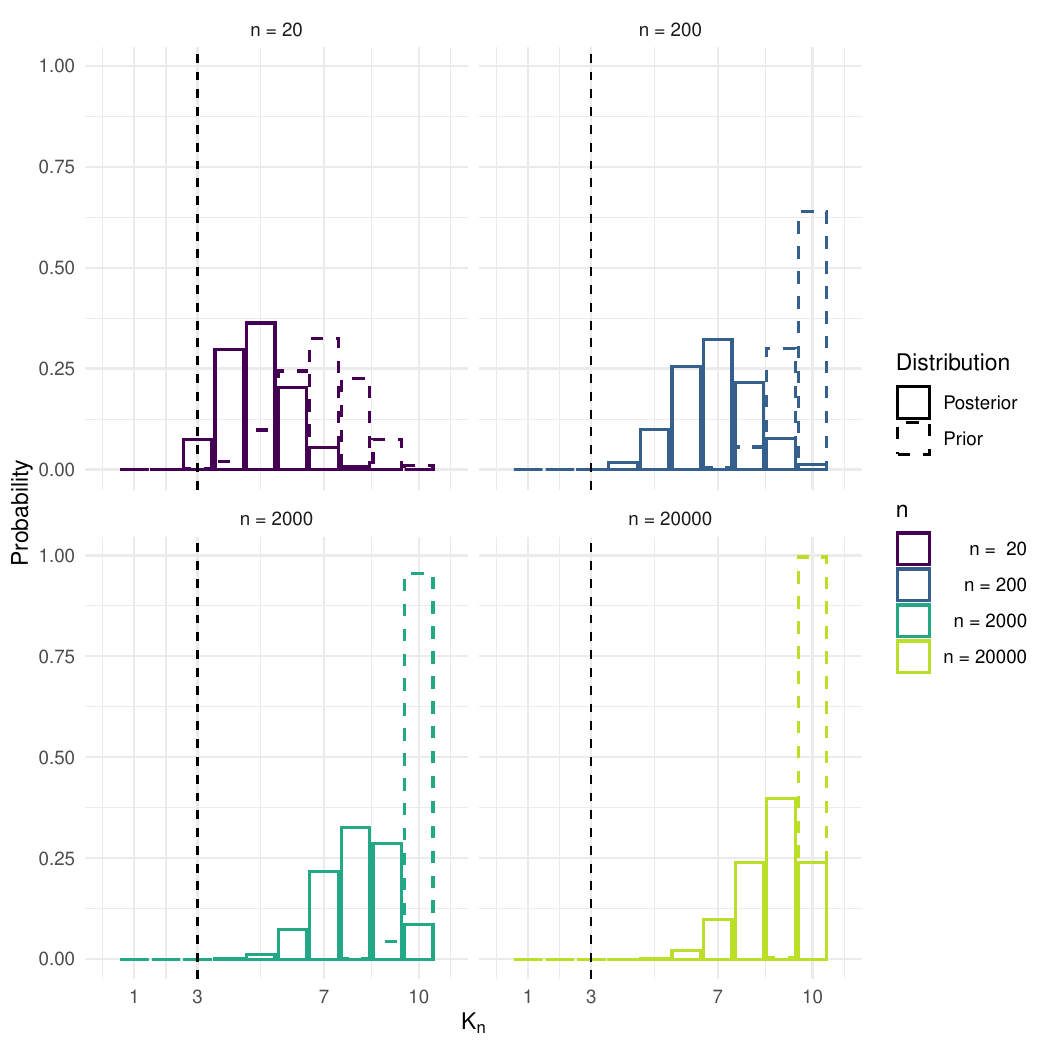}  \\
(a) Fixed $\Bar\alpha = 0.01$  & (b)  Fixed  $\Bar{\alpha} = 1$ \\[6pt]
 \includegraphics[width=.5\textwidth]{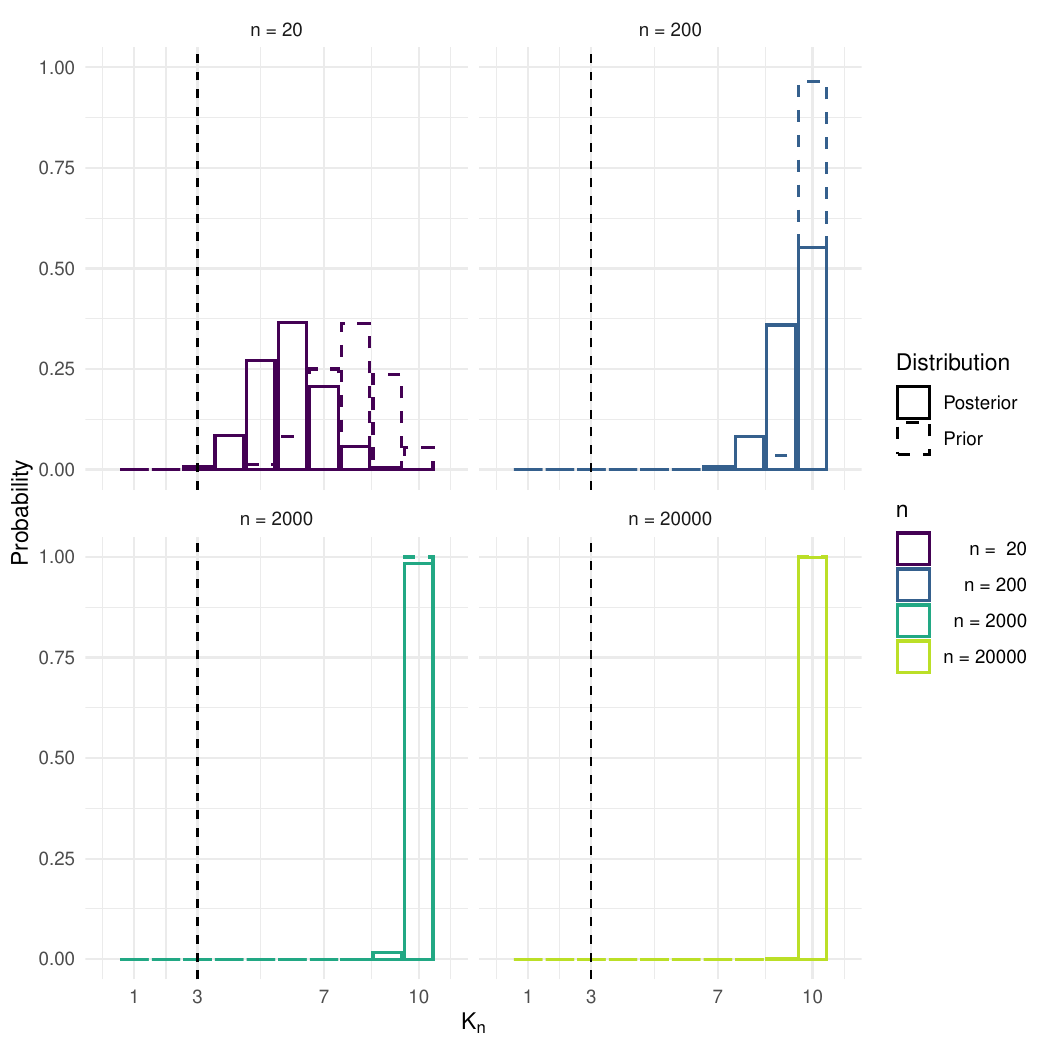} &
 \includegraphics[width=.5\textwidth]{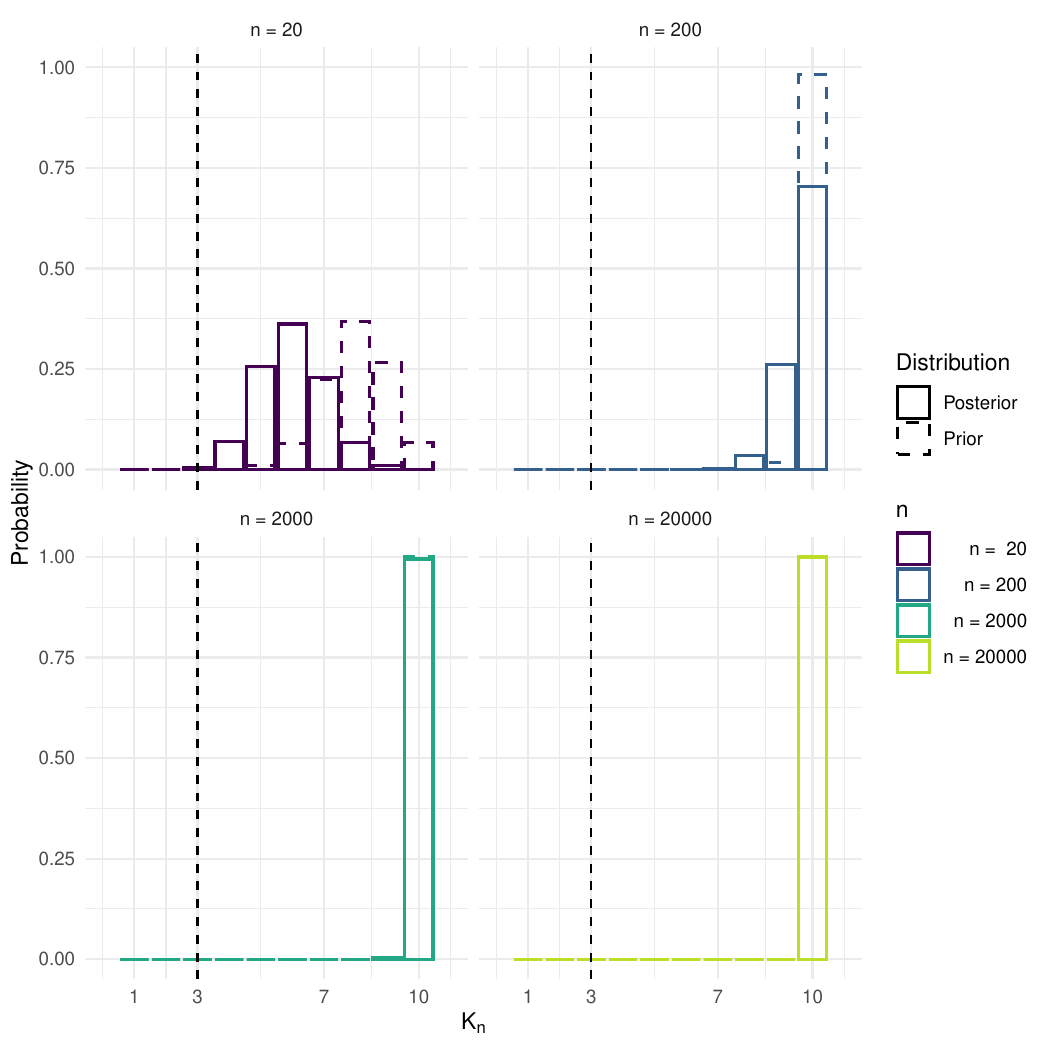} \\
 (c) Fixed $\Bar{\alpha} = 2.5$ & (d)   Fixed $\Bar{\alpha} = 3$  % \\[6pt]
\end{tabular}
\caption{Prior and posterior distributions of the number of clusters $K_n$ \textlegendDPM. The value $\Bar{\alpha} = 2.5$ corresponds to \cite{rousseau2011asymptotic}'s threshold.}
\label{fig:DPM-prior-and-posterior}
\end{figure}

% \textbf{Figure 4 here}

\begin{figure}[H]
\centering
\begin{tabular}{cc}
  \includegraphics[width=.5\textwidth]{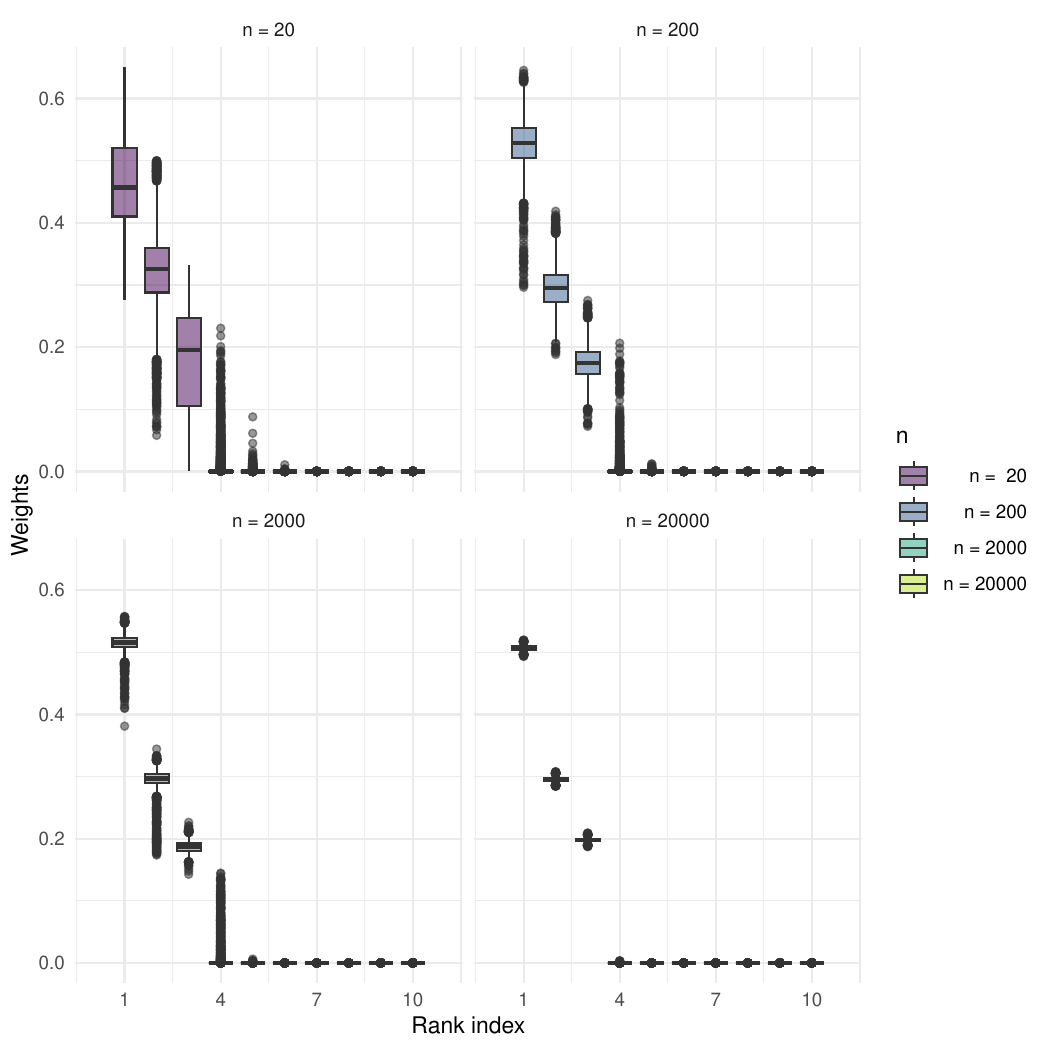} &   \includegraphics[width=.5\textwidth]{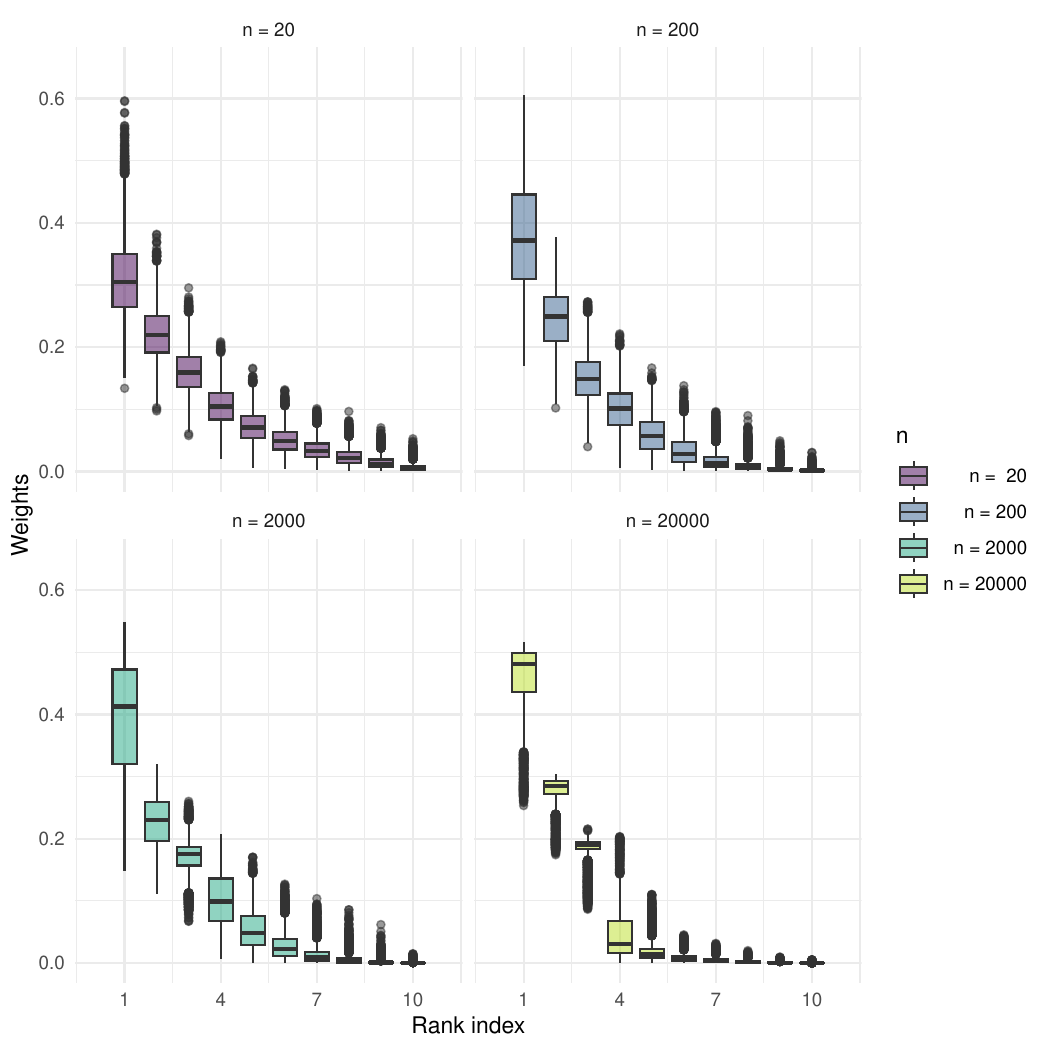}  \\
(a) Fixed $\Bar\alpha = 0.01$  & (b)  Fixed  $\Bar{\alpha} = 1$ \\[6pt]
 \includegraphics[width=.5\textwidth]{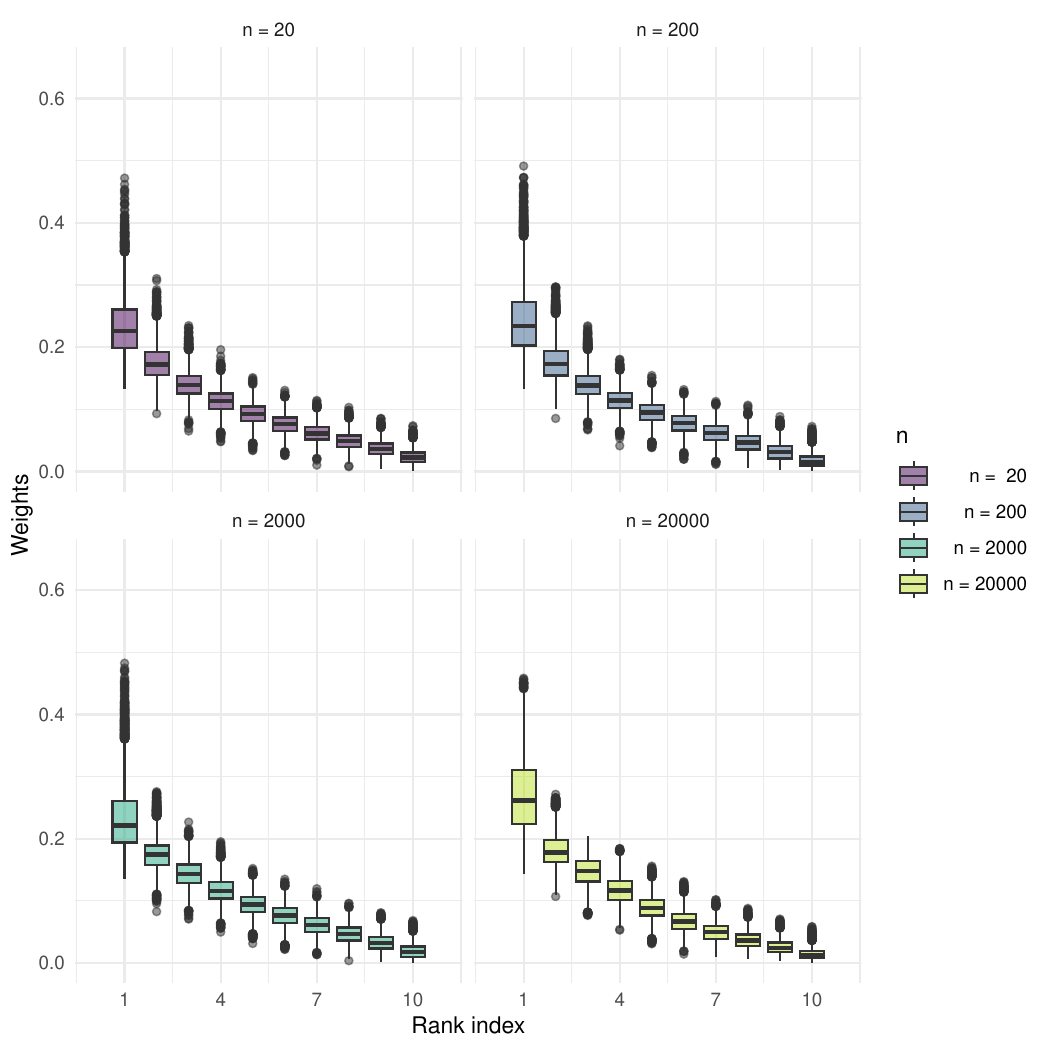} &
 \includegraphics[width=.5\textwidth]{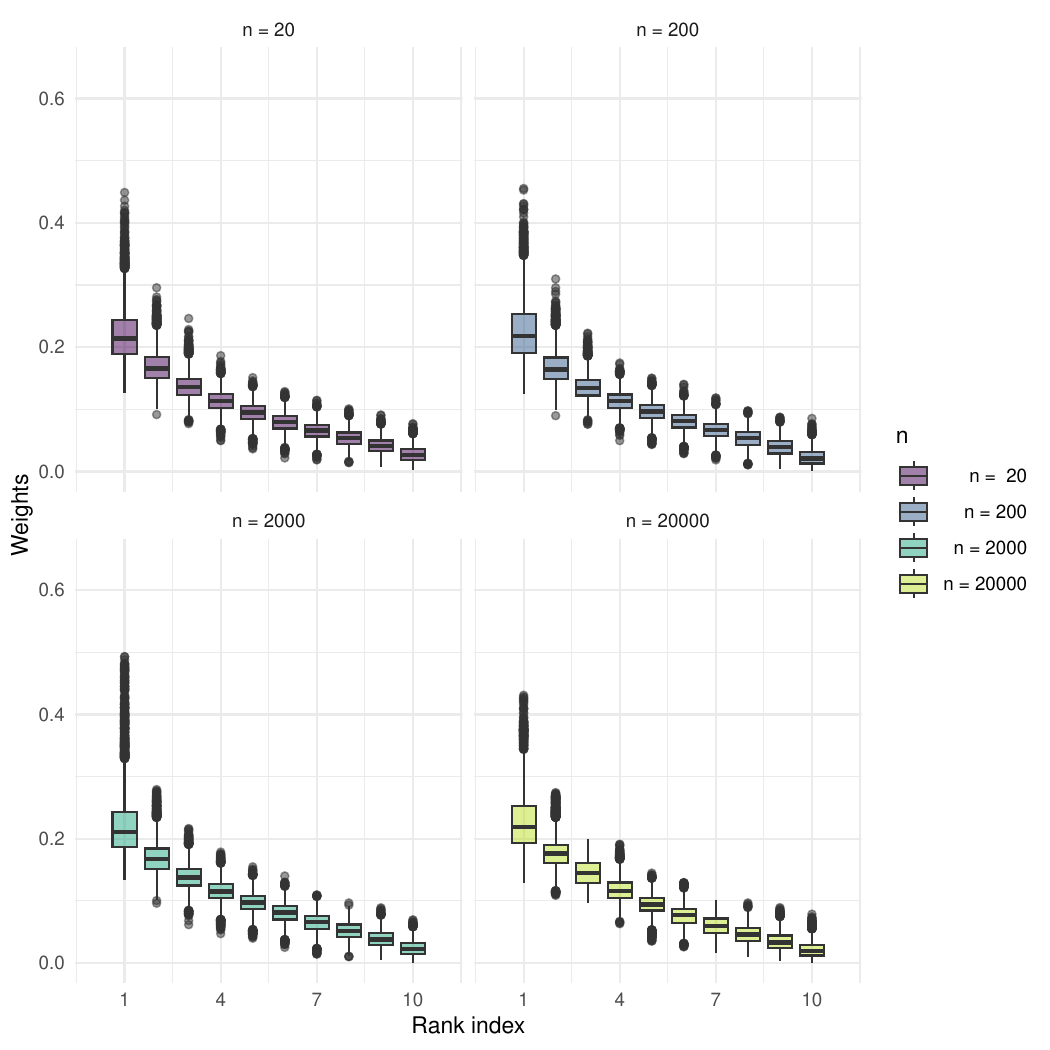} \\
 (c) Fixed $\Bar{\alpha} = 2.5$ & (d)   Fixed $\Bar{\alpha} = 3$  % \\[6pt]
\end{tabular}
\caption{Mixture weights \textlegendDPM. }
\label{fig:DPM-weights}
\end{figure}

% \textbf{Figure 5 here}

\begin{figure}[H]
\centering
\begin{tabular}{cc}
  \includegraphics[width=.5\textwidth]{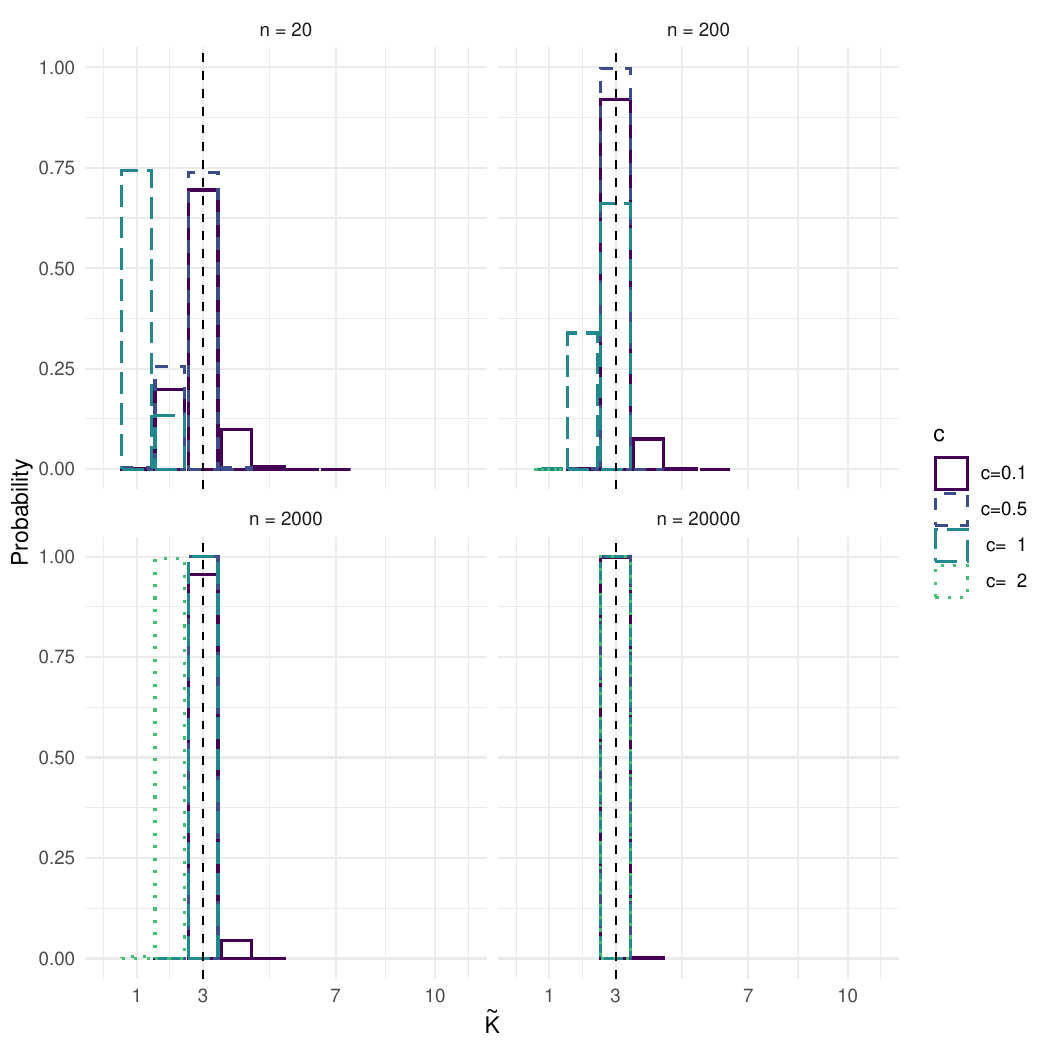} &   \includegraphics[width=.5\textwidth]{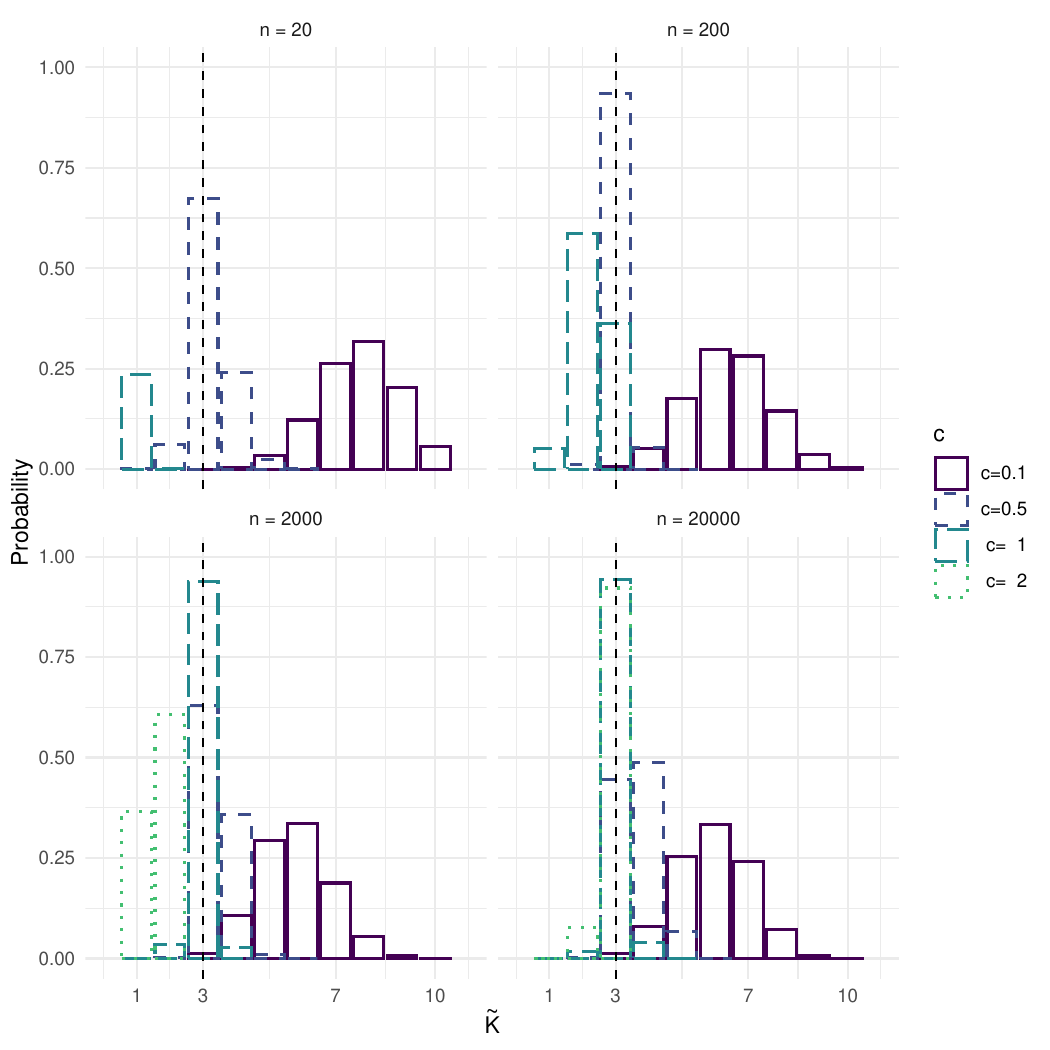}  \\
(a) Fixed $\Bar\alpha = 0.01$  & (b)  Fixed  $\Bar{\alpha} =1$ \\[6pt]
 \includegraphics[width=.5\textwidth]{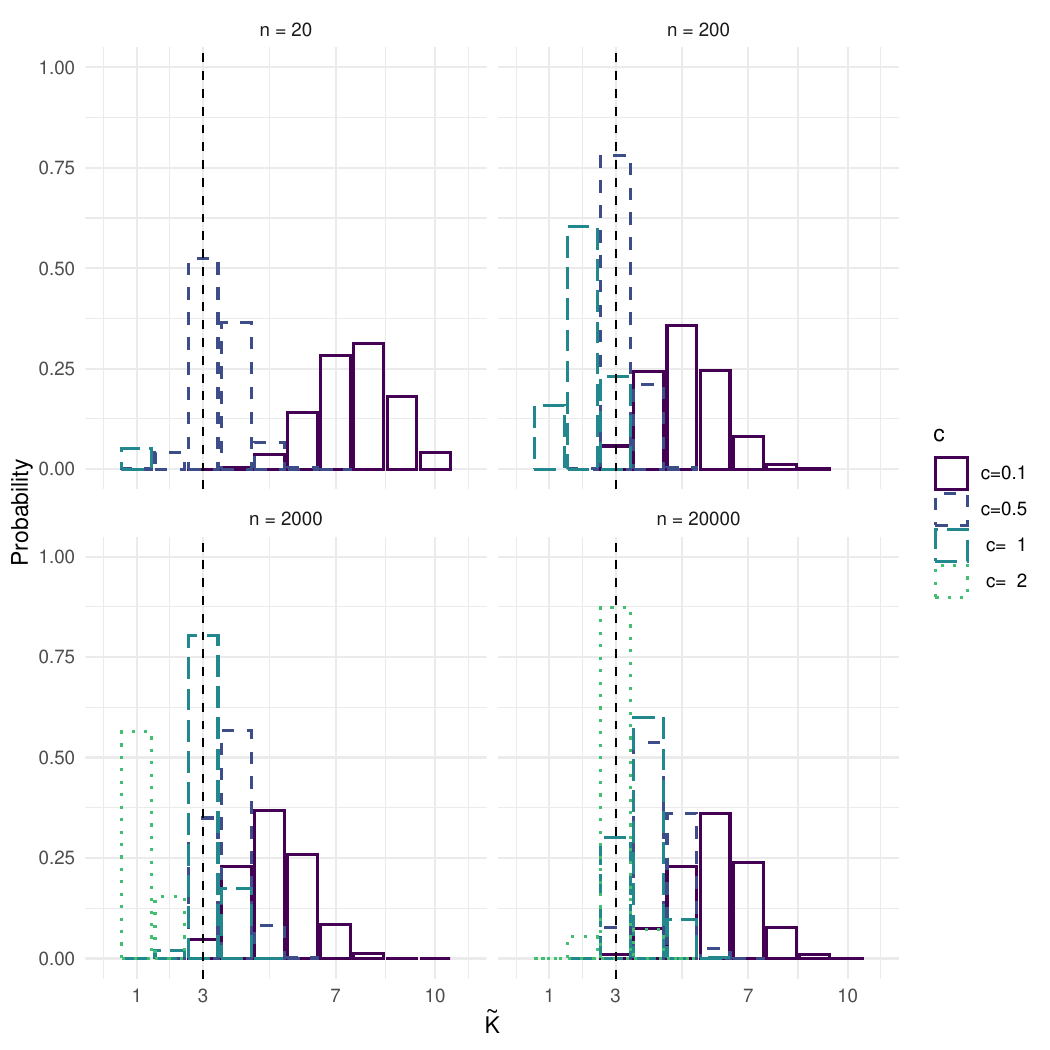} &
 \includegraphics[width=.5\textwidth]{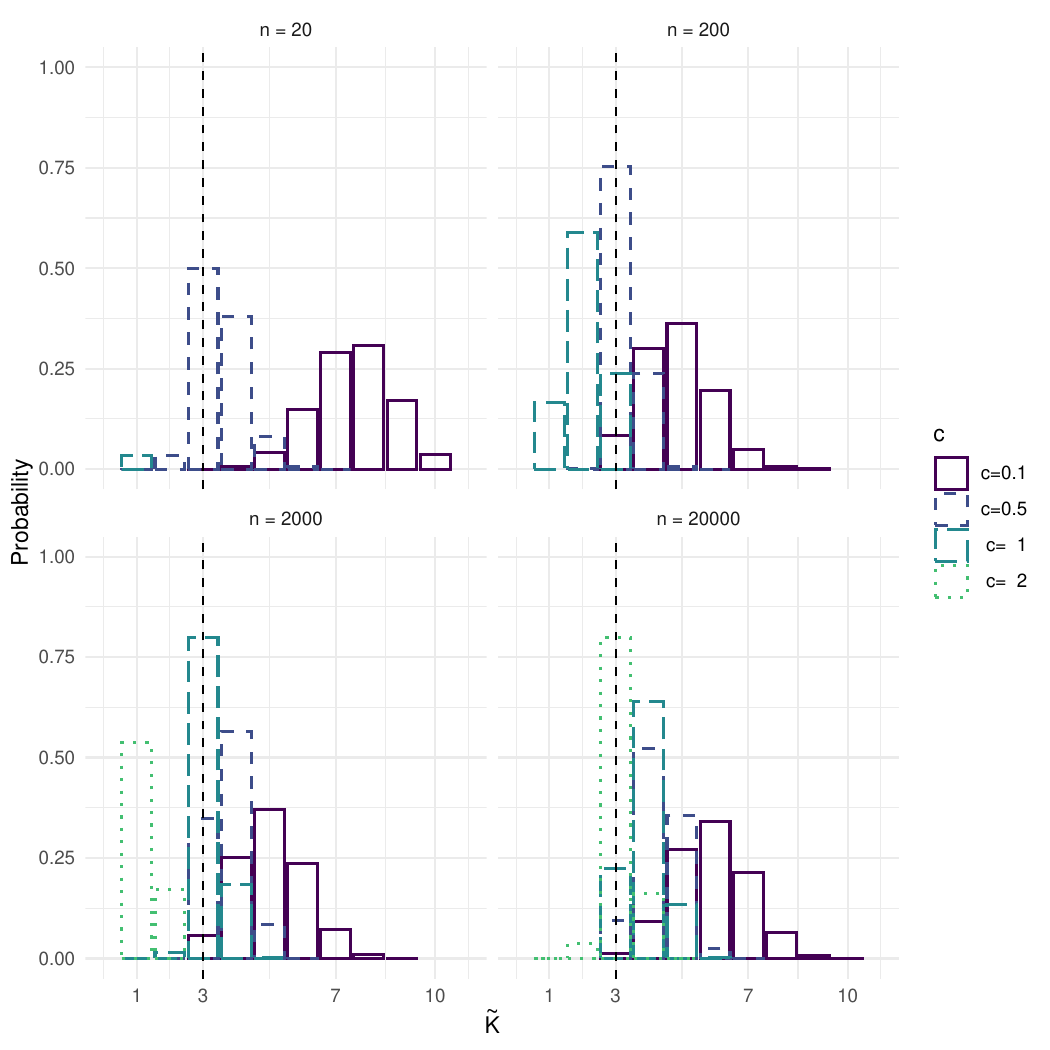} \\
 (c) Fixed $\Bar{\alpha} = 2.5$ & (d)   Fixed $\Bar{\alpha} = 3$  % \\[6pt]
\end{tabular}
\caption{Distribution of $\tilde K$, that is the posterior number of clusters after applying the Merge-Truncate-Merge algorithm of \cite{guha2021posterior}, with $c$ parameter in $\{0.1,0.5,1,2\}$, \textlegendDPM. }
\label{fig:DPM-MTM}
\end{figure}

% \textbf{Figure 6 here}

\begin{figure}[H]
\centering
\begin{tabular}{cc}
  \includegraphics[width=.5\textwidth]{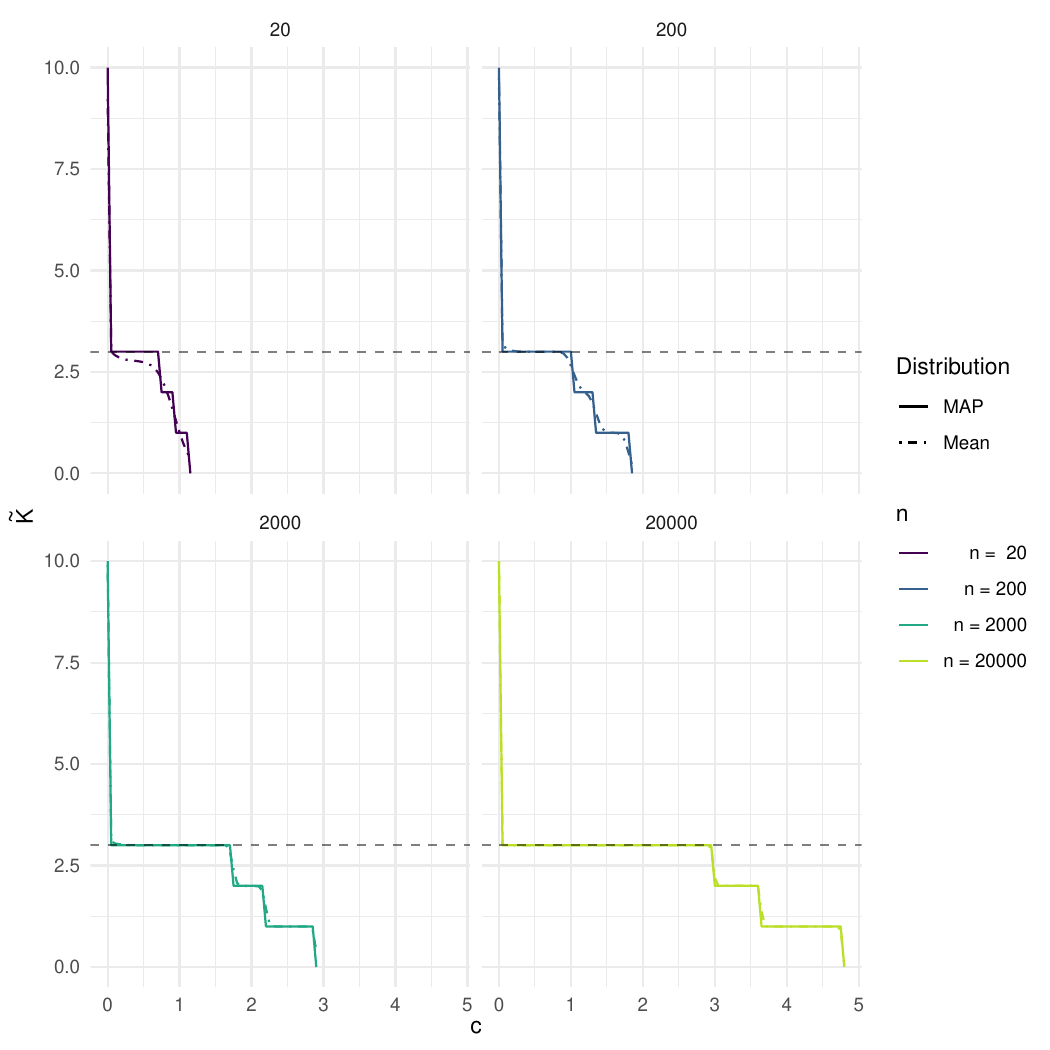} &   \includegraphics[width=.5\textwidth]{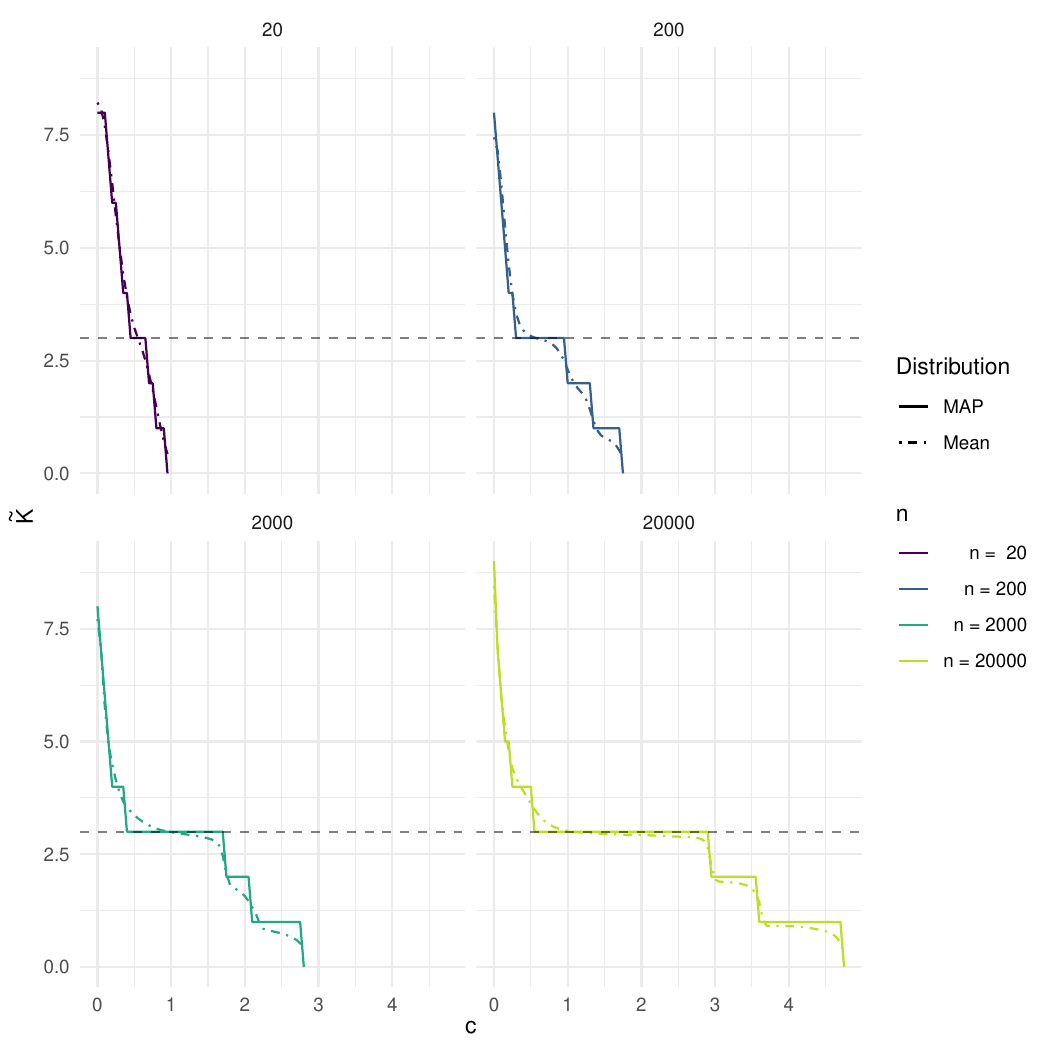}  \\
(a) Fixed $\Bar\alpha = 0.01$  & (b)  Fixed  $\Bar{\alpha} = 1$ \\[6pt]
 \includegraphics[width=.5\textwidth]{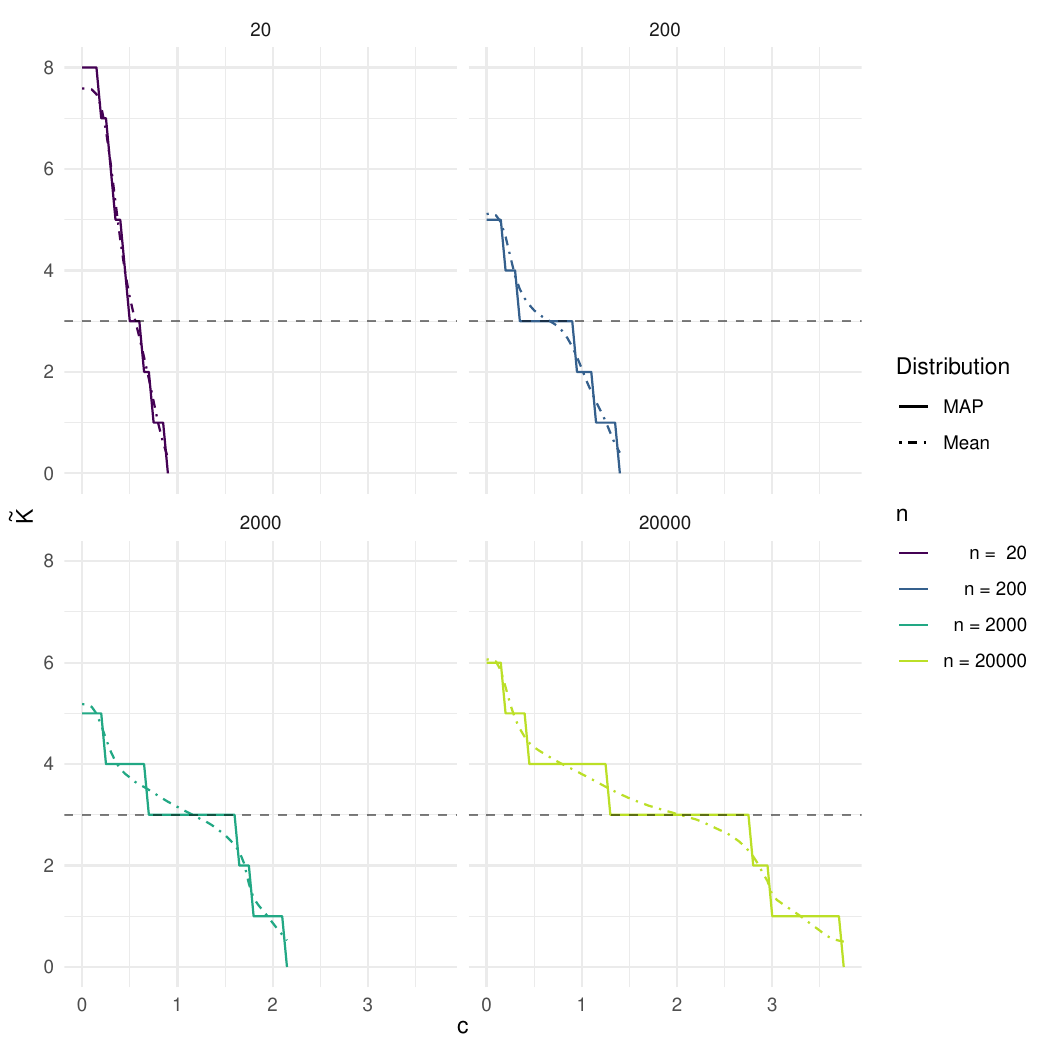} &
 \includegraphics[width=.5\textwidth]{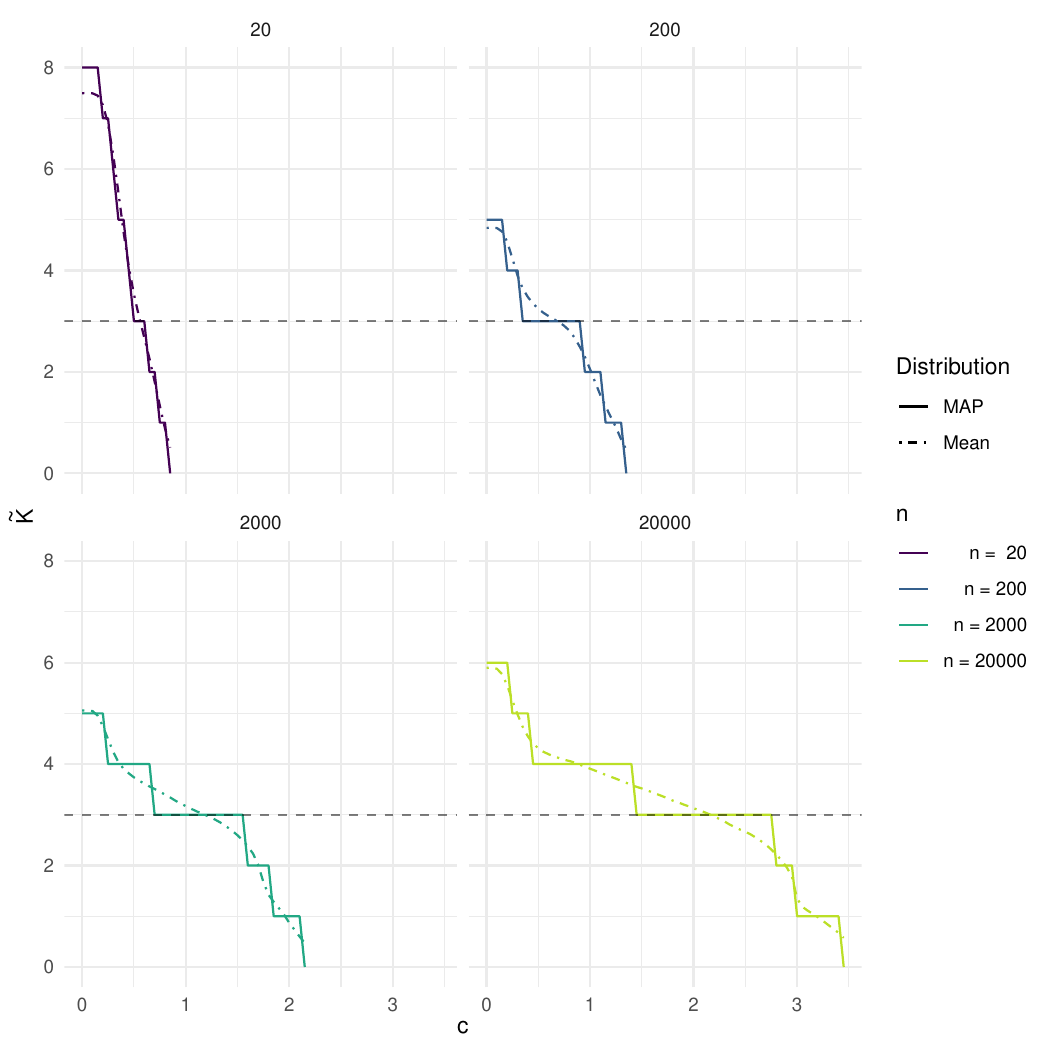} \\
 (c) Fixed $\Bar{\alpha} = 2.5$ & (d)   Fixed $\Bar{\alpha} = 3$  % \\[6pt]
\end{tabular}
\caption{``Regularization path'' for $\tilde K$, that is the posterior number of clusters after applying the Merge-Truncate-Merge algorithm of \cite{guha2021posterior}, with \textcolor{forestgreen}{parameter $c$ in $[0,c_{\max}]$}, \textlegendDPM. The dotted dashed curves represent the posterior mean while the solid curves represent the maximum a posteriori (MAP) and the dotted horizontal line represents $K_0 =3$.}
\label{fig:DPM-reg-path}
\end{figure}

%%%%%%%%%% Real Data %%%%%%%%%%
\section{Real-data Analysis}
\label{sec:real}
We now consider the Sodium-Lithium Countertransport (SLC) dataset introduced by \cite{dudley_assessing_1991}. This dataset is composed of $190$ individual measurements of SLC level. %It is a univariate dataset.
This dataset was studied by \cite{miller_nonparametric_nodate} with a location-scale Gaussian mixture of finite mixture (MFM) model.
In the following, we consider the maximum a posteriori of the number of clusters found in \cite{miller_nonparametric_nodate} using MFM as ground truth, hence the true number of components is assumed to be $K_0=2$.

We used two different models to study this dataset: a Pitman--Yor process mixture model with various values of  $\alpha \in\{0.01,0.5\}$ and $\sigma\in\{0.1,0.25\}$, and a Dirichlet multinomial process mixture model with $K=10$ components and various choices of parameter $\Bar{\alpha}=\alpha/K\in\{0.01,0.5,1,2\}$. 
In both cases, we used a location-scale mixture model described in detail in Appendix \ref{an:real}.

In Figure \ref{fig:SLC}, we present the posterior distribution of the number of clusters and the so-called ``regularization paths'' for the two models. 
More precisely, Figure \ref{fig:SLC} (a) presents the posterior distribution of the number of clusters for a Pitman--Yor process mixture model.
We used the marginal sampler from \texttt{BNPmix} package proposed in \cite{corradin_bnpmix_2021}.
In Figure \ref{fig:SLC} (b), we illustrate the application of the Merge-Truncate-Merge algorithm \citep[MTM,][]{guha2021posterior} to the posterior distribution of the mixing measure for the Pitman--Yor process mixture model with the ``regularization paths'' plots for the parameter $c$ from the algorithm.
It is worth noticing that even if the contraction rate given in Lemma \ref{lem:Contract} is only valid for a location mixture with the scaling parameter known, here we used this rate for a location-scale mixture model as the scaling parameter is unknown. More precisely, we used the rate of Lemma \ref{lem:Contract} with $\eta=2$ as the kernel is Gaussian.
In the same way, in Figure \ref{fig:SLC} (c), we present the posterior distribution of the number of clusters for a Dirichlet multinomial process mixture model with the number of components $K=10$ and various choices of parameter $\Bar{\alpha}$.  
In Figure \ref{fig:SLC} (d), we illustrate the application of the Merge-Truncate-Merge algorithm \citep{guha2021posterior} to the posterior distribution of the mixing measure for the Dirichlet multinomial process mixture model with the ``regularization paths'' plots for the parameter $c$ from the algorithm.

% \textbf{Figure 7 here}
\begin{figure}
    \centering
    \begin{tabular}{cc}
  \includegraphics[width=.5\textwidth]{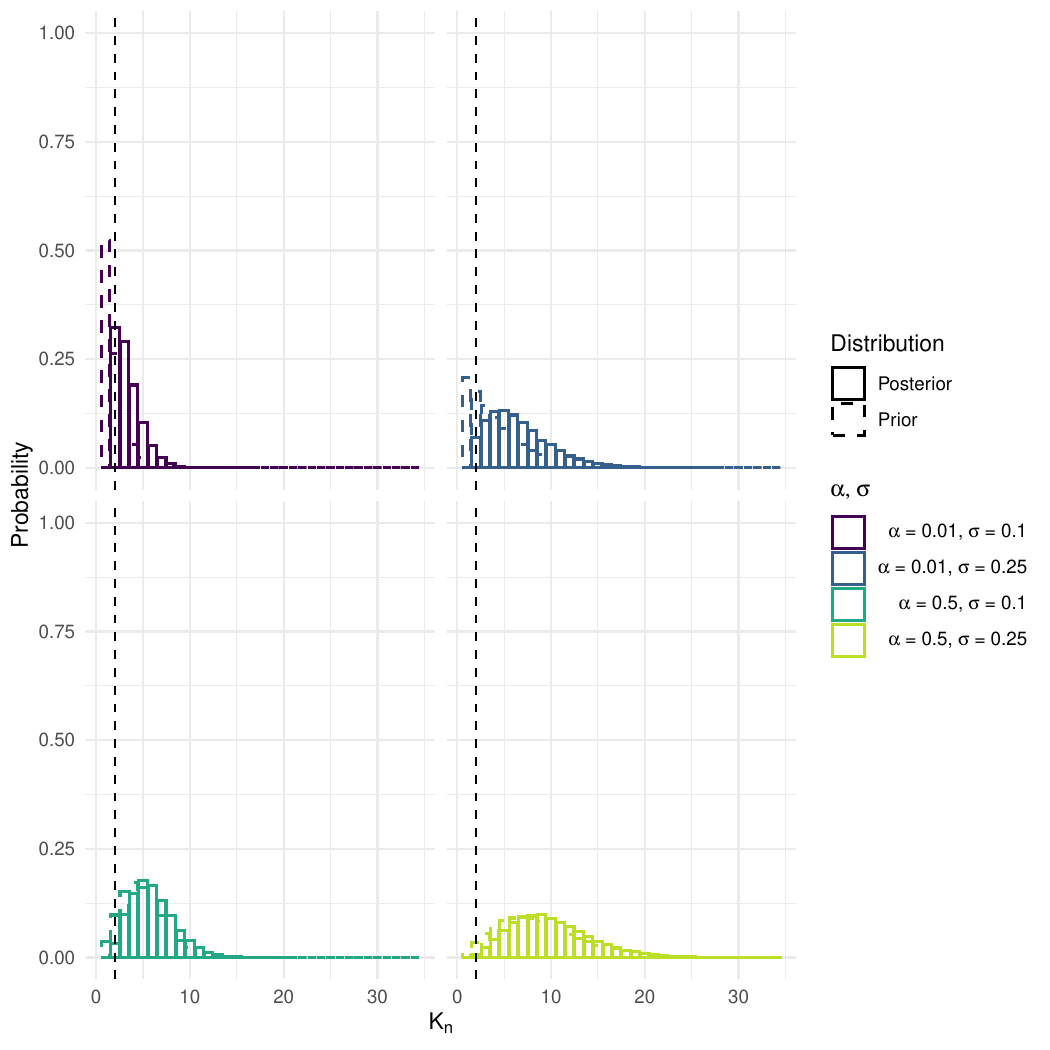} &   \includegraphics[width=.5\textwidth]{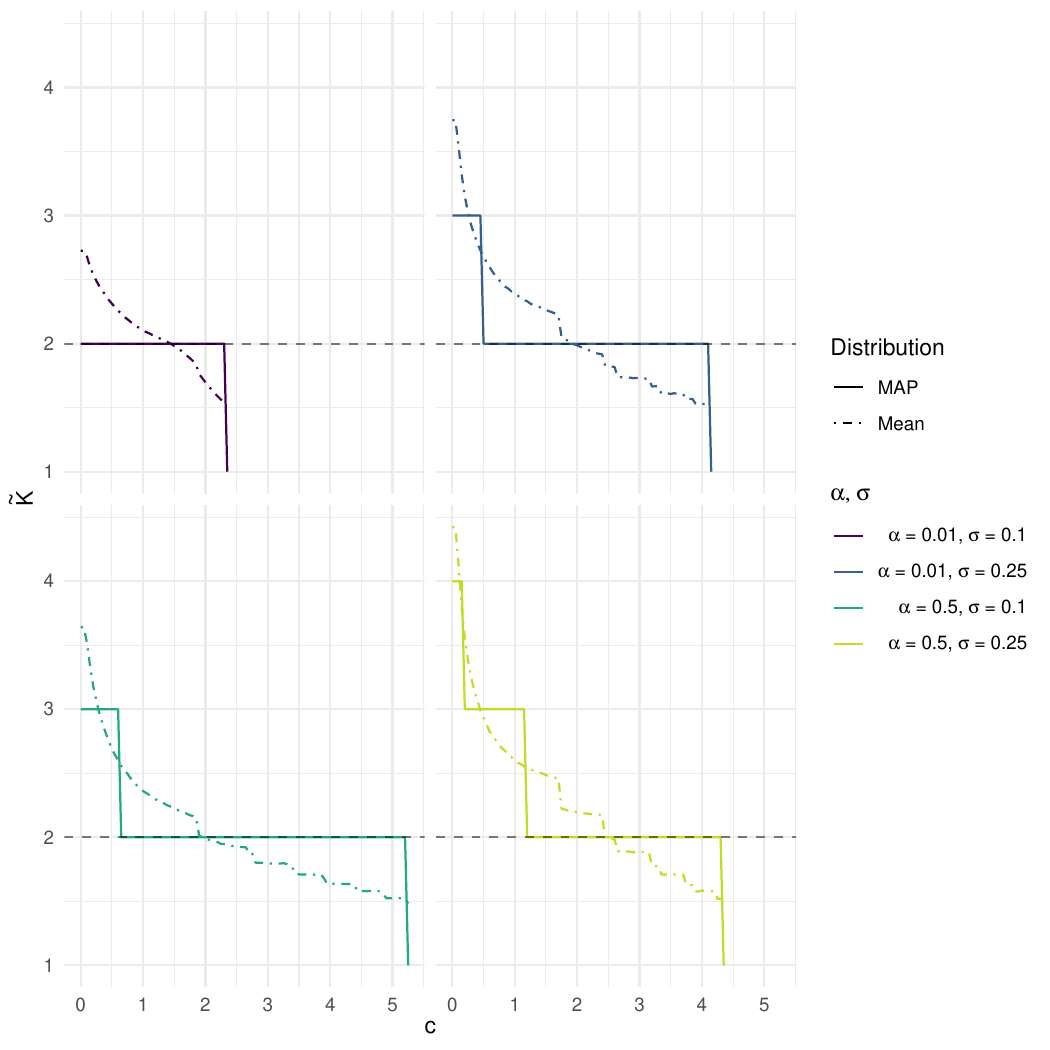}  \\
(a) Pitman--Yor process model & (b) Pitman--Yor process model   \\[6pt]
 \includegraphics[width=.5\textwidth]{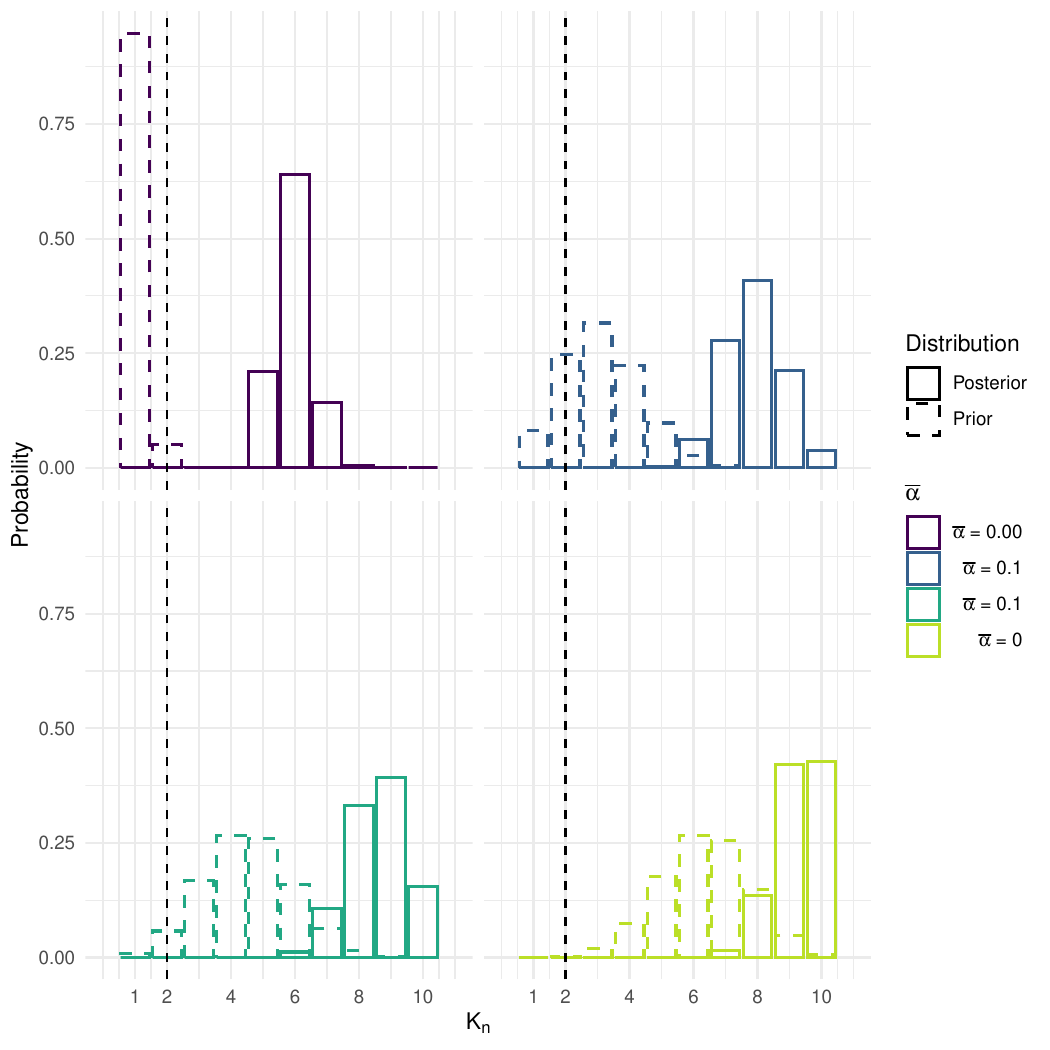} &
 \includegraphics[width=.5\textwidth]{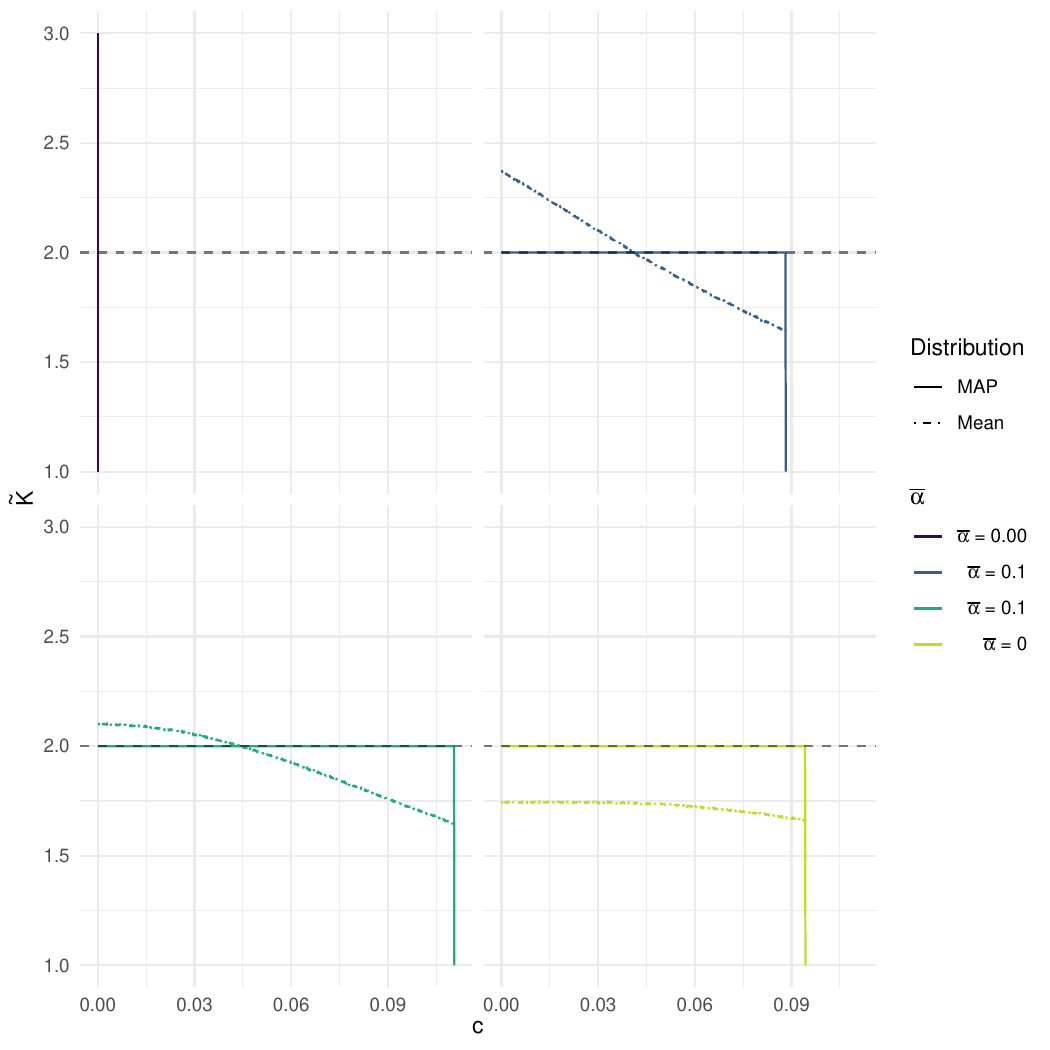} \\
 (c) Dirichlet multinomial process model & (d) Dirichlet multinomial process model   % \\[6pt]
\end{tabular}
    \caption{(a) Prior and posterior distributions of the number of clusters $K_n$ under a Pitman--Yor process model with various choices of $\alpha$ and $\sigma$ applied on the SLC dataset. (b) Corresponding ``regularization path'' for $\tilde K$, that is the posterior number of clusters after applying the Merge-Truncate-Merge algorithm of \cite{guha2021posterior}, with $c$ parameter in $[0,6]$. \\% under a Pitman--Yor process model with various choices of $\alpha$ and $\sigma$. \\
    (c) Prior and posterior distributions of the number of clusters $K_n$ under a Dirichlet multinomial process mixture with fixed parameter $K = 10$, and various choices of $\Bar{\alpha}=\alpha/K$ applied on the SLC dataset. (d) Corresponding ``regularization path'' for $\tilde K$, that is the posterior number of clusters after applying the Merge-Truncate-Merge algorithm of \cite{guha2021posterior}, with $c$ parameter in $[0,0.12]$.\\ 
    For (b) and (d) the dotted dashed curves represent the posterior mean while the solid curves represent the maximum a posteriori (MAP). For (a) and (c) the solid curves represent the posterior distribution of $K_n$ while the dotted curves represent the prior distribution. The dotted line represents $K_0=2$.}
    \label{fig:SLC}
\end{figure}

For both models, in Figure \ref{fig:SLC} (a) and (c), the posterior distribution of the number of clusters is not centered around the ground truth $K_0=2$. This aligns with the inconsistency results in Section \ref{sec:inconsistency}. 
In Figure \ref{fig:SLC} (b), for each value of $\alpha$ and $\sigma$ a plateau can be observed on the true value $K_0=2$ for the maximum a posteriori as a function of the parameter $c$ of the MTM algorithm. 
In Figure \ref{fig:SLC} (b) and (d), the range of values for the parameter $c$ is $[0,c_\mathrm{max} ]$ where $c_\mathrm{max}$ is defined as in Section~\ref{sec:simulation}, such that for the greater value of $c$  $\tilde K$ is equal to $1$.
On the other hand, $c=0$ is such that only the first stage of MTM algorithm is performed, see Appendix \ref{an:bg} for more details.
In Figure \ref{fig:SLC} (d), the ranges of values for $c$ are very small, illustrating the fact that the first stage of the MTM algorithm alone already has a very strong merging effect. Plateaus on the true value $K_0=2$ for the maximum a posteriori as a function of $c$ can still be observed for $\Bar{\alpha}>0.01$.
For $\Bar{\alpha}=0.01$, the first stathe ge of MTM algorithm is not strong enough to find $\Tilde{K}=2$ (finds $\tilde K = 3$) and the second stage of the algorithm, which applies when $c>0$, directly jumps to one cluster $\Tilde{K}=1$.

%%%%%%%%%% Discussion %%%%%%%%%%
\section{Discussion}
We studied the finite and infinite mixture models with well-specified kernels applied to data generated from a mixture with a finite number of components. In this setting, we have proved that Gibbs-type process mixtures are inconsistent a posteriori for the number of clusters. 
It is also the case for some finite-dimensional representations of  Gibbs-type priors such as the Dirichlet multinomial, Pitman--Yor multinomial and normalized generalized gamma multinomial processes. 
However, we did not prove inconsistency in general for \textsc{NIDM} \citep{lijoi2020finite-dim}. Further, we discussed the different approaches to solving inconsistency problems for both finite and infinite mixtures. 

For overfitted mixtures, \cite{rousseau2011asymptotic} prove that for some parameter specifications, the weights for extra components vanish, but it does not guarantee the posterior consistency of the number of clusters. We show that this guides prior specification for some of the models that are inconsistent a posteriori, such as overfitted mixtures based on the Dirichlet multinomial. On the other hand, we also proved that the Pitman--Yor multinomial process does not satisfy the conditions of \cite[Theorem 1 in][]{rousseau2011asymptotic}.
When the Wasserstein convergence rate of the mixing measure is known, the Merge-Truncate-Merge (MTM) algorithm proposed by \cite{guha2021posterior} allows obtaining a consistent estimate of the number of components in Bayesian nonparametric and overfitted mixtures. 
In particular, we showed that in contrast to the results of \cite{rousseau2011asymptotic}, the Merge-Truncate-Merge algorithm can be applied to the Dirichlet multinomial and Pitman--Yor multinomial processes without parameter constraints. %However, the Merge-Truncate-Merge algorithm is a post-processing procedure, while the result from \cite{rousseau2011asymptotic} is a property of posterior consistency of the mixing measure in the overfitted mixture model. 
Moreover, we also proved that Merge-Truncate-Merge can be applied to the Pitman--Yor process in the case of location mixtures.

%In our work, we considered the same conditions on the kernel as in \cite{miller2014inconsistency}. This condition is essential for the application of the theorem. At the same time, for the application of Merge-Truncate-Merge or vanishing weights regime of \cite{rousseau2011asymptotic}, the kernel conditions are also playing a crucial role. For example, they both require bounded support for kernel parameters and strong identifiability. While all these conditions on the kernel are crucial, we did not focus on it as we were interested in the conditions on the prior and in how changing the prior impact the consistency. \textcolor{red}{(?)}

%The Merge-Truncate-Merge algorithm is also applicable in more general cases, for Gibbs-type process mixtures,  if the Wasserstein convergence rates are known. For example, it is known for the Dirichlet process 

Even if it seems possible to recover some consistency with, for example, the Merge-Truncate-Merge procedure, our inconsistency results suggest that Gibbs-type process mixture models face challenges when employed to estimate a finite number of components. This can be related to the fact that this usage corresponds to model misspecification, as these models assume an infinite number of components or a number of clusters growing with the sample size.
When it is known that the number of components is finite, we can also use a Mixture of Finite Mixtures which is better specified for this case. MFM are consistent for the number of components as proved in \cite{guha2021posterior}. 
However, MFM are notoriously more computationally challenging than Dirichlet process mixtures, for instance,  when the number of components is large  \citep[see remark in Section 3.2][]{guha2021posterior}. This might be a motivation to favour using misspecified Gibbs-type process mixture models in conjunction with the Merge-Truncate-Merge algorithm for instance in place of MFM.
However, recent works introduced new samplers for MFM which appear more computationally efficient than the usual ones \citep{miller2018,fruhwirth2021generalized}.

It is known that the Dirichlet process mixture model tends to create some extra little clusters which are linked to the inconsistency result \citep[see \textcolor{forestgreen}{e.g.}][and references therein]{miller2014inconsistency}. To avoid these clusters, some authors propose to use repulsive mixture models \citep[see \textcolor{forestgreen}{e.g.}][]{petralia2012repulsive}. Such models introduce a dependence on the components to better spread them out in the parameter space. \cite{xie_bayesian_2020} prove consistency for the density and the mixing measure for repulsive mixture models with Gaussian kernel. As for the number of components, no consistency is proven, but it is shown that some shrinkage effect occurs.

Another way to solve the inconsistency problem of the posterior number of clusters in the Dirichlet process mixture is introduced by \cite{ohn2020optimal}. 
Their solution is to make the concentration parameter $\alpha$ decrease when the sample size increases. 
With this assumption, they obtain a nearly tight upper bound on the true number of \textcolor{forestgreen}{components} through the posterior number of clusters. They also present a simulation study showing posterior consistency for the number of components.
We can wonder if control over the concentration parameter $\alpha$ when the sample size increases can allow posterior consistency for the number of components.
Indeed, \cite{ascolani2022clustering} proposes a way to control this parameter through a prior, which gives consistency for the number of \textcolor{forestgreen}{clusters} for a Dirichlet process mixture.

We investigate empirically these two directions, with a simulation study for Dirichlet multinomial mixtures where (i) we fix the expected number of clusters a priori when the sample size increases, implying that $\alpha$ decreases (Figure~\ref{fig:DPM-varying-alpha} (a) and (b)) and (ii) we use a Gamma prior on the concentration parameter (Figure~\ref{fig:DPM-varying-alpha} (c) and (d)) (see details in Appendix \ref{an:simulation}). As illustrated in Figure~\ref{fig:DPM-varying-alpha}, the posterior number of clusters in both cases seems to estimate the true number of components well even for large sample sizes, and the posterior seems to be consistent. This observation is corroborated by the posterior distribution of the weights shown in Figure~\ref{fig:DPM-varying-alpha} (b) and (d). However, there are no theoretical guarantees for consistency or inconsistency since the results of respectively \cite{ohn2020optimal} and \cite{ascolani2022clustering} do not apply in both cases. 
Although we cannot directly compare our experimental results with results obtained by \cite{ohn2020optimal} due to different theoretical assumptions,  we can note that the theoretical results obtained by \cite{ohn2020optimal} require that $\Bar{\alpha}$ decreases as $n^{-a_{0}}$, where $a_0>0$, which is faster than the \textcolor{forestgreen}{$1/\log(n)$} decrease induced by fixing the expectation.  So, the obtained results suggest that a slower decrease rate for $\alpha$ might be enough to obtain consistency. 
%In sum, obtained results suggest the potential wider applicability of these two approaches then proven in \cite{ohn2020optimal,ascolani2022clustering}.}
%but do not show any sign of consistency and instead seem to slightly deteriorate when the sample size increases. In the first case, it can be related to the slow decrease of $\alpha$ with $n$ compared to the one used in \cite{ohn2020optimal}. 

% \textbf{Figure 8 here}
\begin{figure}[hp]
\thispagestyle{empty}
\centering
\begin{tabular}{cc}
  \includegraphics[width=.5\textwidth]{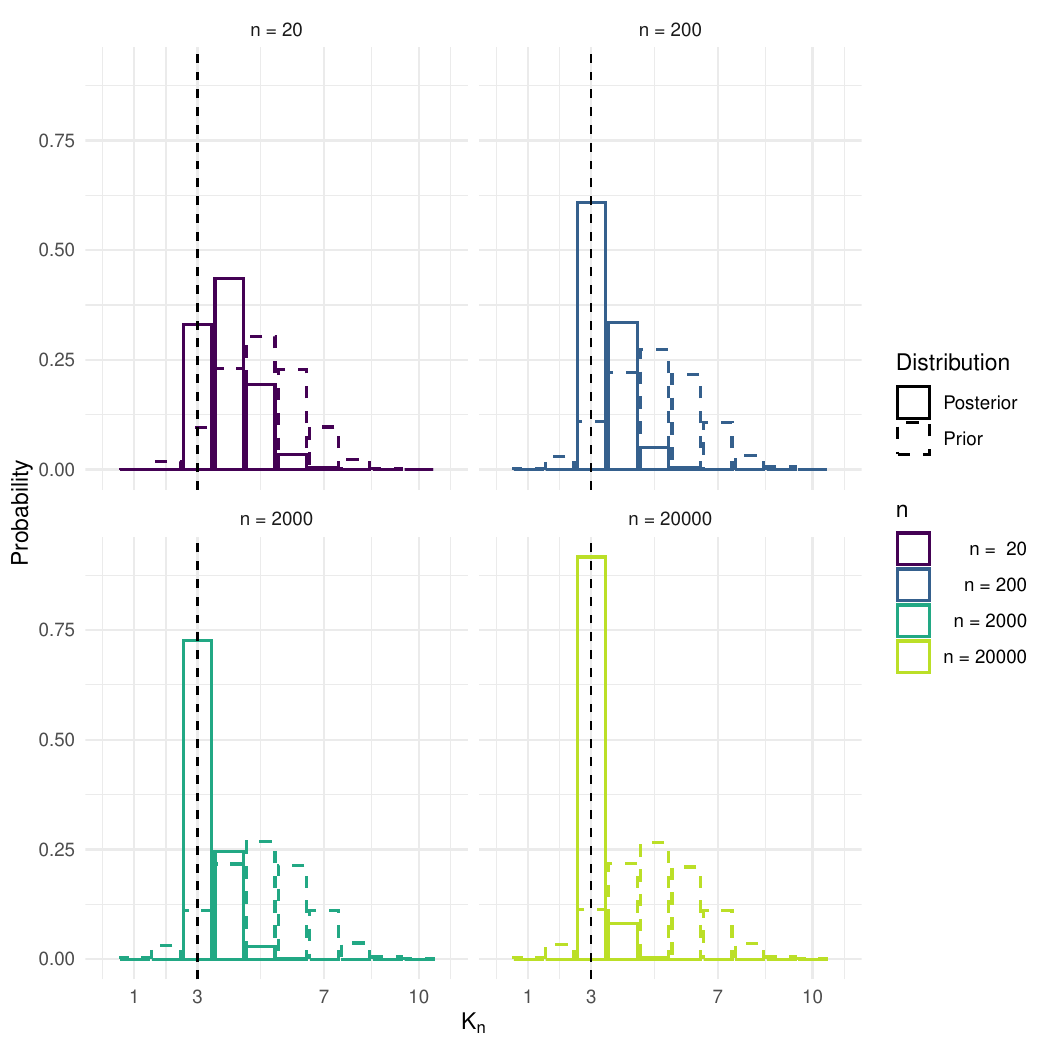} &   \includegraphics[width=.5\textwidth]{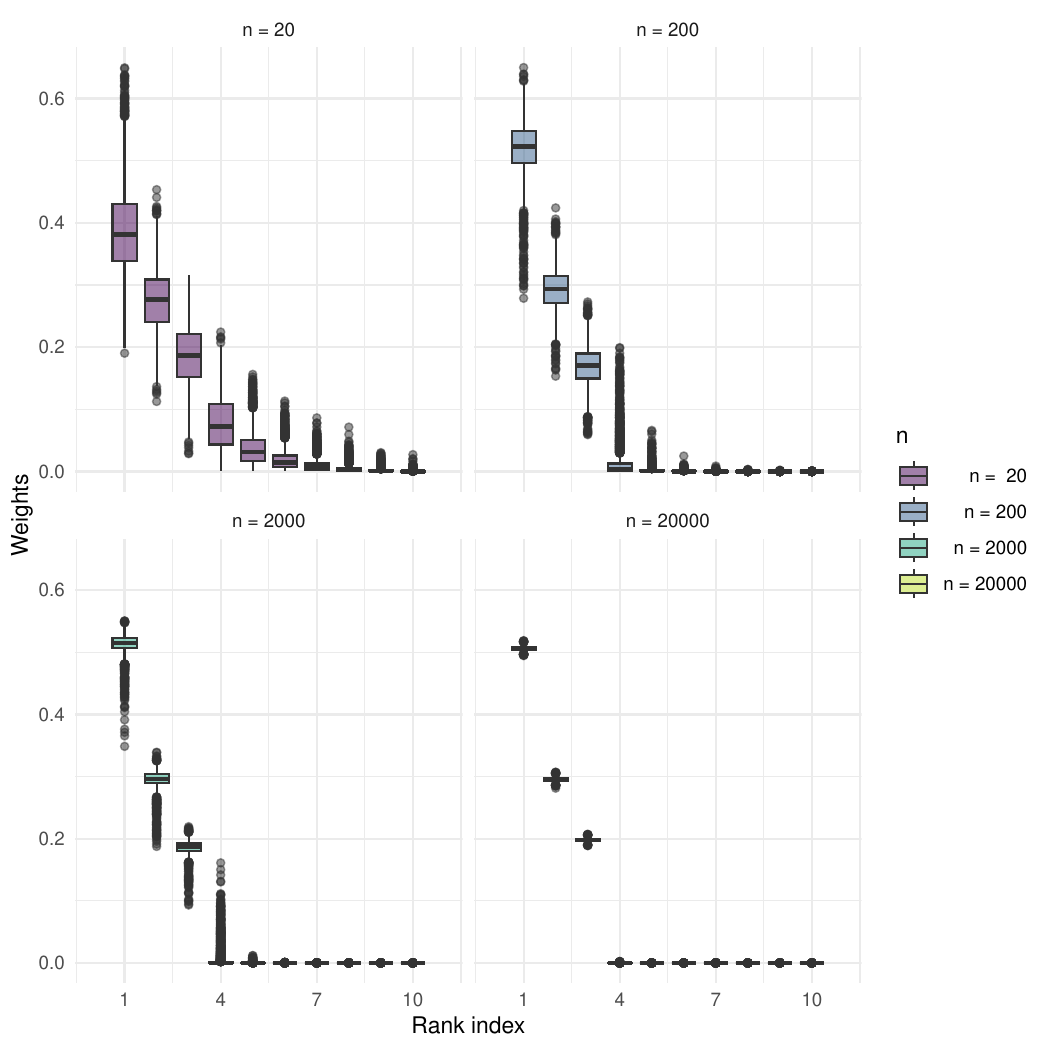}  \\
(a) $\Bar{\alpha}_n :$ $\mathbb{E}[K_n]= 5$  & (b)   $\Bar{\alpha}_n :$ $\mathbb{E}[K_n]= 5$ \\[6pt]
 \includegraphics[width=.5\textwidth]{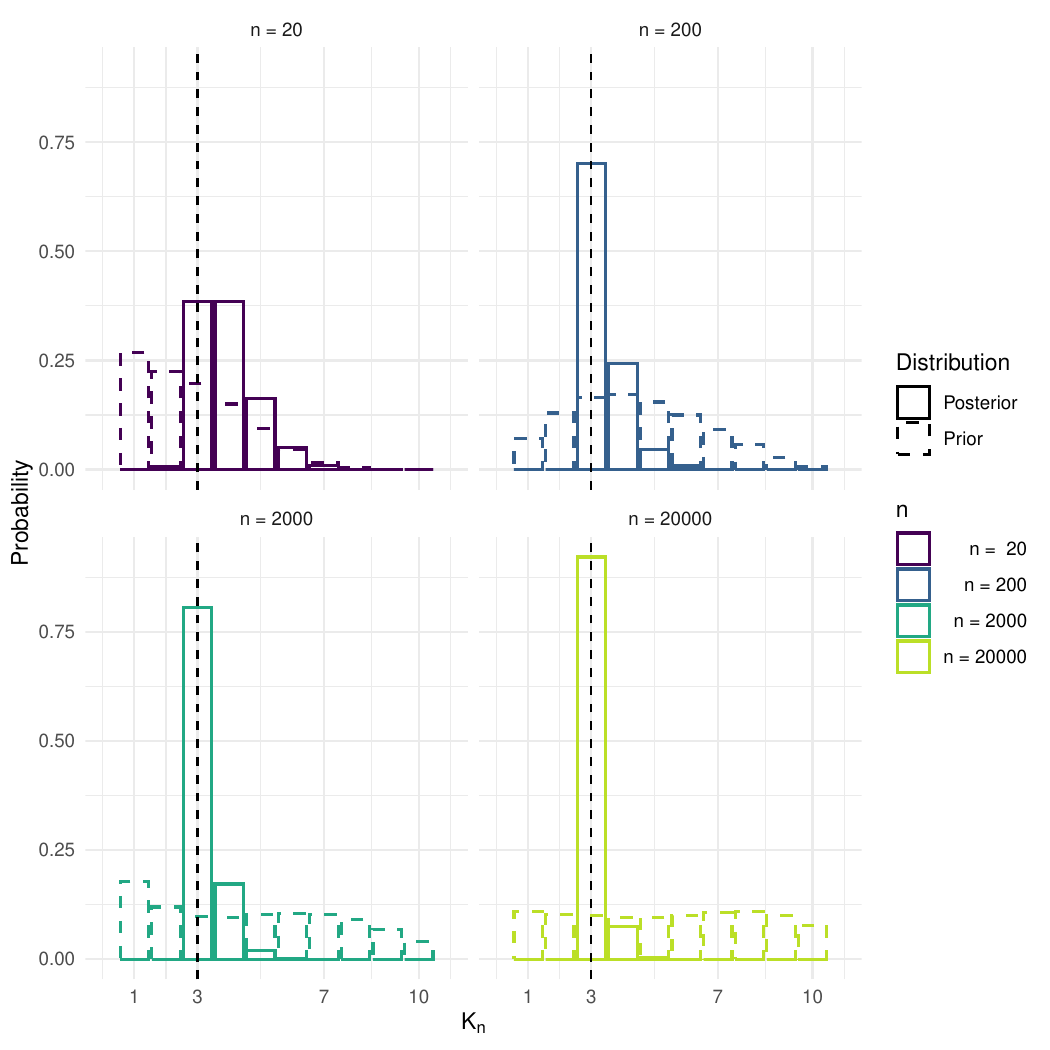} &
 \includegraphics[width=.5\textwidth]{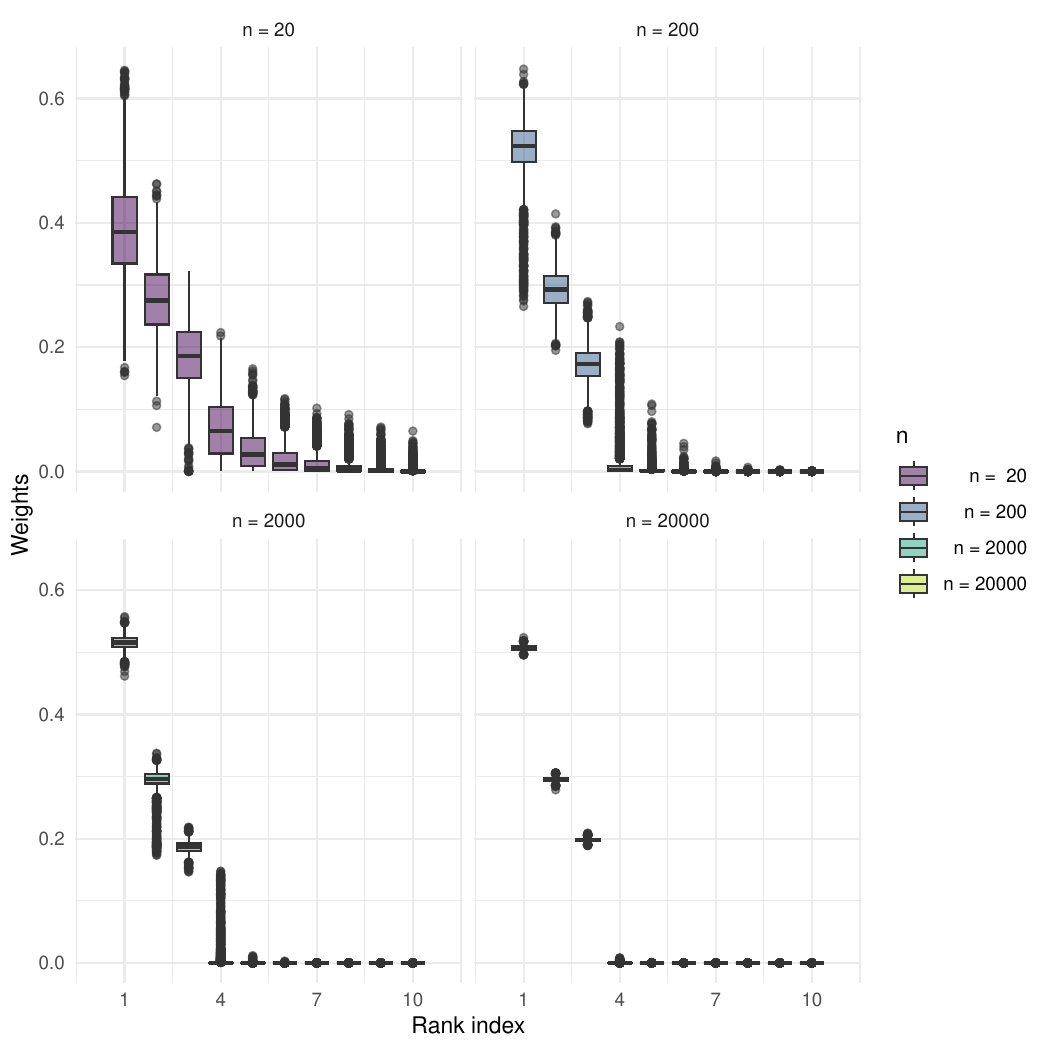} \\
 (c) $\Bar{\alpha} \sim \text{Ga}(a,bK)$ & (d)   $\Bar  \alpha \sim \text{Ga}(a,bK)$\\[6pt]
\end{tabular}
\caption{Dirichlet multinomial process mixtures varying concentration parameter $\Bar{\alpha}$. 
(a) and (b): $\Bar{\alpha}$ chosen such that $\mathbb{E}[K_n]=5$ for various choices of $n$. 
(c) and (d): a $\text{Ga}(a,bK)$ prior is used on $\Bar{\alpha}$. 
(a) and (c): Prior and posterior distributions of the number of clusters $K_n$.
(b) and (d): Boxplots of mixture weights.}
\label{fig:DPM-varying-alpha}
\end{figure}

Another way to estimate the number of components is to use the approach of \cite{wade_bayesian_2018}. This approach consists of a point estimation of the partition of the data and is commonly used in practice. 
%However, to the best of our knowledge, the consistency of the number of clusters has not been proven. 
As it is widely used in practice, it would be interesting to investigate the consistency in this case. 
\cite{chaumeny_bayesian_2022} investigates this question from a practical point of view, using a simulation study, some positive results are found, but no theory is provided.

Bayesian nonparametric or overfitted mixtures are often used in practical applications. In our work, we showed that the number of clusters estimated using these models is inconsistent for some of these models. We have also discussed possible ways to obtain consistent estimates in practice, using either prior- or post-processing procedures. However, throughout the paper, we considered a well-specified kernel case, where the data is generated from the finite mixture of distributions that belong to the considered kernel family. In practice, this condition can be easily violated, and an interesting avenue of research would be to investigate misspecified settings.

\section*{Acknowledgements}

We would like to thank the two anonymous Referees and the Associate Editor for their valuable comments, which significantly improved the manuscript. We also wish to thank Pierpaolo de Blasi for highlighting an inaccuracy in an earlier version of this work.

\section*{Supporting Information}
Additional information for this article is available \href{https://github.com/dbystrova/BNPconsistency}{online}, corresponding to the code used for the simulations and the figures.

% References
\bibliographystyle{template/ba}
\bibliography{sample.bib}

% \paragraph{Address}
% Louise Alamichel \\
% Inria Grenoble Rhône-Alpes\\
% 655 Avenue de l’Europe\\
% 38 330 Montbonnot-Saint-Martin, France. \\
% Email: louise.alamichel@inria.fr

\newpage
%%%%%%%%%%%%%%%%%%%%%%%%%%%%%%%%%%%%%%%%%%%%%%  APPENDIX  %%%%%%%%%%%%%%%%%%%%%%%%%%%%%%%%%%%%%%%%%%%%%%
\appendix
\section{Proofs of the results of Section \ref{sec:inconsistency}}
\label{an:proof}

\begin{proof}[Proof of Proposition \ref{prop:Gibbs}]
    For all $k\in\{1,2,\ldots\}$, we want to prove that 
    \[ \limsup_{n\to\infty}c_n(k) = \limsup_{n\to\infty}\frac{1}{n} \max_{A\in\A{k}}\max_{B\in\Z_A} \frac{p(A)}{p(B)} <\infty, \]
    where $\Z_A$ and $\A{k}$ are defined in Section \ref{sec:partition}.
    \\
    So, it is sufficient to prove that for any fixed $k$, there exists a constant $C$ such that for any $n$, for all $A\in\A{k}$ and $B = B(A,j)$ with $j\in A_\ell$,  $\frac{1}{n}\frac{p(A)}{p(B)}\leq C$.
    
    We consider the Gibbs-type prior case with $\sigma>0$, as case,  $\sigma=0$ is a Dirichlet process and is already proven in \cite{miller2014inconsistency}.
    As we are in the Gibbs-type prior case, we have, for $A\in\A{k}$,
    $p(A) = \frac{V_{n,k}}{k!} \prod_{i=1}^k (1-\sigma)_{n_i-1}$, and so
    \begin{align*}
        \frac{1}{n} \frac{p(A)}{p(B)} & = \frac{1}{n} \frac{V_{n,k}}{k!} \prod_{i=1}^k (1-\sigma)_{|A_i|-1} \frac{(k+1)!}{V_{n,k+1}} \left(\prod_{i=1}^{k+1} (1-\sigma)_{|B_i|-1}\right)^{-1} \\
        & =\frac{k+1}{n}\frac{V_{n,k}}{V_{n,k+1}} \underbrace{(1-\sigma+|A_\ell|-2)}_{\leq n} \\
        & \leq \frac{V_{n,k}}{V_{n,k+1}} (k+1).
    \end{align*}
    Therefore, we just have to prove that the sequence $\left(\frac{V_{n,k}}{V_{n,k+1}}\right)_{n\geq1}$ is bounded.\\
    \\    
    Using the recurrence relation \eqref{eq:rec_Vnk}, we have 
    \begin{align}
        V_{n,k} = V_{n+1,k+1} + (n-\sigma k) V_{n+1,k} &\iff  \frac{V_{n,k}}{V_{n+1,k+1}} = \frac{V_{n+1,k+1}}{V_{n+1,k+1}} + (n-\sigma k) \frac{V_{n+1,k}}{V_{n+1,k+1}} \nonumber\\
        \label{eq:ratio_Vnk}
        & \iff \frac{V_{n+1,k}}{V_{n+1,k+1}} = \left( \frac{V_{n,k}}{V_{n+1,k+1}}-
        1\right) \frac{1}{n-\sigma k}.
    \end{align} 
    
    We denote by $f_n(p,t) = t^{-\sigma k}p^{n-1-k\sigma} h(t) f_\sigma(t(1-p))$ the integrand function of Equation \eqref{eq:Vnk2}. From the definition of the $V_{n,k}$ in \eqref{eq:Vnk2}, we can write
    \[\frac{V_{n+1,k}}{V_{n,k}} = \frac{1}{n-\sigma k} \frac{\iint p f_n}{\iint f_n}. \]
    Using again the recurrence relation \eqref{eq:rec_Vnk}, we have 
    \[  \frac{V_{n+1,k+1}}{V_{n,k}} = 1-(n-\sigma k) \frac{V_{n+1,k}}{V_{n,k}}. \]
    Then, applying the Laplace approximation method twice and by setting $(t_n,p_n)$ the mode of $f_n$, we obtain as in \cite{arbel2021approximating}
    \begin{equation}
        \label{eq:ratio_lapl}
        \frac{V_{n+1,k+1}}{V_{n,k}} = g(t_n, p_n) + o\left(\frac{1}{n}\right),
    \end{equation}
    with $g(t_n, p_n) = 1-p_n$. 
    Indeed, to use the Laplace approximation, we write the integrand as $f_n = e^{n\ell_n}$, then
    \[ \frac{V_{n+1,k+1}}{V_{n,k}} = \frac{\iint g e^{n\ell_n}}{\iint e^{n\ell_n}}. \]
    As the exponential term is the same in both integrands of this ratio, by applying the Laplace approximation method to both integrals, we obtain
    \[\frac{V_{n+1,k+1}}{V_{n,k}} = \frac{g(t_n, p_n)+a(t_n,p_n)/n+\mathcal{O}\left(\frac{1}{n^2}\right)}{1+\mathcal{O}\left(\frac{1}{n}\right)}, \]
    where $a(t_n,p_n)$ is a second order term such that $a(t_n,p_n) = o(1/n)$. Hence, the previous ratio finally simplifies to \eqref{eq:ratio_lapl}.
    
%    Moreover, the mode $(t_n,p_n)$ of $f_n$ is determined by the partial derivatives:
%    \begin{align*}
%        %\label{eq:derivp}
%        n\frac{\partial\ell_n(p,t)}{\partial p} &= \frac{n-\sigma k -1}{p}-t\frac{f_\sigma'(t(1-p))}{f_\sigma(t(1-p))},\\
%        %\label{eq:derivt}
%        n\frac{\partial\ell_n(p,t)}{\partial t} &= \frac{-\sigma k }{t}+\frac{h'(t)}{h(t)}+(1-p)\frac{f_\sigma'(t(1-p))}{f_\sigma(t(1-p))},
%    \end{align*}
%    where $f_\sigma'$ and $h'$ denote respectively the derivative of $f_\sigma$ and $h$.
    
    Let $\varphi_h(t) = -th'(t)/h(t)$, we can finally write using the partial derivatives above %\eqref{eq:derivp} and \eqref{eq:derivt}
    \begin{equation}
        \frac{V_{n+1,k+1}}{V_{n,k}} = \frac{\sigma k + \varphi_h(t_n)}{n+ \varphi_h(t_n)-1} + o\left(\frac{1}{n}\right).
    \end{equation}
    Thus, if $\varphi_h(t_n)$ converges as $n$ tends to infinity, we have that $\frac{V_{n+1,k+1}}{V_{n,k}} \times \frac{n}{\sigma k}\to 1$ as $ n\to\infty$, so with the relation \eqref{eq:ratio_Vnk}, $\frac{V_{n+1,k}}{V_{n+1,k+1}} \underset{n\to\infty}{\longrightarrow} \frac{1}{\sigma k}$.
    If $\varphi_h(t_n)$ diverges as $n$ tends to infinity, we have that
    \[
    \lim_{n\to\infty} \frac{V_{n+1,k+1}}{V_{n,k}} = \left\{
        \begin{array}{ll}
            \frac{1}{c+1} & \text{ if } \frac{n}{\varphi_h(t_n)} \underset{n\to\infty}{\longrightarrow} c, \; c\in\mathbb{R}, \\
            0  & \text{ if } \frac{n}{\varphi_h(t_n)} \underset{n\to\infty}{\longrightarrow} \pm\infty.
        \end{array}
        \right.
    \]
    And then, using again \eqref{eq:ratio_Vnk}, $\frac{V_{n+1,k}}{V_{n+1,k+1}} \underset{n\to\infty}{\longrightarrow} 0$. Hence, 
    \[
    \lim_{n\to\infty} \frac{V_{n+1,k}}{V_{n+1,k+1}} = \left\{
        \begin{array}{ll}
            \frac{1}{\sigma k} & \text{ if }  \varphi_h(t_n) \text{ converges,} \\
            0  & \text{ if }  \varphi_h(t_n) \text{ diverges.}
        \end{array}
        \right.
    \]
    Thus, the sequence $\left(\frac{V_{n,k}}{V_{n,k+1}}\right)_{n\geq1}$ is bounded and Condition \ref{cond:part} is satisfied.
    
\end{proof}

\begin{proof}[Proof of Proposition \ref{prop:PYM}]
We consider $A\in\A{k}$ and $B = B(A,j)$, and we assume for simplicity, and without loss of generality, that the cluster in $A$ which contains the element $j$ is the $k$-th cluster $A_k$. As in the previous proof, we want to bound the ratio $\frac{p(A)}{p(B)}$ for the three different partition probabilities considered in the proposition. 

First, we consider the Dirichlet multinomial process, which is a special case of the Pitman--Yor multinomial process and normalized generalized gamma when $\sigma = 0$. Then we consider the Pitman--Yor multinomial process and the normalized generalized gamma process with $\sigma>0$.\\

(a) Dirichlet multinomial process: 
using \eqref{eq:EPPF_DPM}, we have
    \begin{equation*}
        \frac{1}{n}\frac{p(A)}{p(B)} = \frac{1}{n} \frac{p(n_1,\ldots,n_k)}{p(n_1,\ldots,n_k-1,1)}.
    \end{equation*}
So,
    \begin{align*}
        \frac{1}{n}\frac{p(A)}{p(B)} &= \frac{1}{n}\frac{(k+1)!(K-k-1)! \prod_{j=1}^{k}(c/K)_{n_j}(c)_n}{k!(K-k)! \prod_{i=1}^{k+1}(c/K)_{n_i}(c)_n}\\
        &= \frac{(k+1)(c/K + n_k -1)  }{n(K-k)c/K}\leq \frac{K(k+1)}{c(K-k)}.
    \end{align*}
    Thus, Condition \ref{cond:part} is satisfied for the Dirichlet multinomial process.

(b) Pitman--Yor multinomial process with $\sigma >0$: 
we denote by $q_{\ell^{(k)}} = \prod_{i=1}^{k} C(n_i,\ell_i;\sigma)/K^{\ell_i}$.
    Using \eqref{eq:EPPF_PYM}, we have
    \begin{align*}
        \frac{1}{n}\frac{p(A)}{p(B)} &= \frac{1}{n} \frac{p(n_1,\ldots,n_k)}{p(n_1,\ldots,n_k-1,1)}\\
        &= \frac{(k+1)!(K-(k+1))!}{nk!(K-k)!}\frac{\sum_{(\ell_1,\ldots,\ell_k)} \frac{\Gamma(\alpha/\sigma+|\ell^{(k)}|)}{\sigma\Gamma(\alpha/\sigma+1)}~q_{\ell^{(k)}}}
        {\sum_{(\ell_1,\ldots,\ell_{k+1})} \frac{\Gamma(\alpha/\sigma+|\ell^{(k+1)}|)}{\sigma\Gamma(\alpha/\sigma+1)}~q_{\ell^{(k+1)}}} \\[0.3cm]
        &= \frac{k+1}{n(K-k)} \frac{\sum_{(\ell_1,\ldots,\ell_{k-1})} \sum_{\ell_k=1}^{n_k}\Gamma(\alpha/\sigma+|\ell^{(k)}|)~q_{\ell^{(k)}}}
        {\sum_{(\ell_1,\ldots,\ell_{k-1})}\sum_{\ell_k=1}^{n_k-1}\sum_{n_{k+1} = 1}^1 \Gamma(\alpha/\sigma+|\ell^{(k+1)}|)~q_{\ell^{(k+1)}}} \\[0.3cm]
        &= \frac{k+1}{n(K-k)} \frac{\sum_{(\ell_1,\ldots,\ell_{k-1})} \sum_{\ell_k=1}^{n_k}\Gamma(\alpha/\sigma+|\ell^{(k)}|)~q_{\ell^{(k)}}}
        {\sum_{(\ell_1,\ldots,\ell_{k-1})}\sum_{\ell_k=1}^{n_k-1}\sum_{n_{k+1} = 1}^1 \Gamma(\alpha/\sigma+|\ell^{(k+1)}|)~q_{\ell^{(k)}}~ \frac{C(1,1;\sigma)}{K^{\ell_{k+1}}}} \\[0.3cm]
        &= \frac{K(k+1)}{n\sigma(K-k)} \frac{\sum_{(\ell_1,\ldots,\ell_{k-1})} \sum_{\ell_k=1}^{n_k}\Gamma(\alpha/\sigma+|\ell^{(k)}|)~q_{\ell^{(k)}}}
        {\sum_{(\ell_1,\ldots,\ell_{k-1})}\sum_{\ell_k=1}^{n_k-1} \Gamma(\alpha/\sigma+|\ell^{(k)}|+1)~q_{\ell^{(k)}}}\\[0.3cm]
        &=: \frac{K(k+1)}{n\sigma(K-k)}(R_1 + R_2).
    \end{align*}
    We separate the sum over $\ell_k$ in the numerator in two, $R_1$ corresponds to the first $n_k-1$ terms and $R_2$ to the last one.
    We compute separately $R_1$ and $R_2$. 
    \begin{align*}
        R_1 &= \frac{\sum_{(\ell_1,\ldots,\ell_{k-1})}\sum_{\ell_k=1}^{n_k-1} \Gamma(\alpha/\sigma+|\ell^{(k)}|)~q_{\ell^{(k)}}}
        {\sum_{(\ell_1,\ldots,\ell_{k-1})}\sum_{\ell_k=1}^{n_k-1} \Gamma(\alpha/\sigma+|\ell^{(k)}|+1)~q_{\ell^{(k)}}} \\[0.3cm]
        &= \frac{\sum_{(\ell_1,\ldots,\ell_{k-1})}\sum_{\ell_k=1}^{n_k-1} \Gamma(\alpha/\sigma+|\ell^{(k)}|)~q_{\ell^{(k)}}}
        {\sum_{(\ell_1,\ldots,\ell_{k-1})}\sum_{\ell_k=1}^{n_k-1} (\alpha/\sigma+|\ell^{(k)}|)\Gamma(\alpha/\sigma+|\ell^{(k)}|)~q_{\ell^{(k)}}}\\[0.3cm]
        &\leq \frac{\sum_{(\ell_1,\ldots,\ell_{k-1})}\sum_{\ell_k=1}^{n_k-1} \Gamma(\alpha/\sigma+|\ell^{(k)}|)~q_{\ell^{(k)}}}
        {(\alpha/\sigma+k)~\sum_{(\ell_1,\ldots,\ell_{k-1})}\sum_{\ell_k=1}^{n_k-1} \Gamma(\alpha/\sigma+|\ell^{(k)}|)~q_{\ell^{(k)}}}\\[0.3cm]
        &\leq \frac{1}{\alpha/\sigma+k}.
    \end{align*}
    Using twice the fact that $k\mapsto C(n,k;\sigma)$ is non increasing for $k\in\{1,\ldots,n\}$ \citep[see][]{bystrova2021approximating}, so $C(n_k,1;\sigma)\geq C(n_k,\ell_k;\sigma)\geq C(n_k,n_k;\sigma)$, and that $\Gamma(\alpha/\sigma+|\ell^{(k-1)}|+n_k)\leq \sum_{\ell_k=1}^{n_k-1} \Gamma(\alpha/\sigma+|\ell^{(k-1)}|+\ell_k+1)$, we obtain
    
    \begin{align*}
        R_2 &= \frac{\sum_{(\ell_1,\ldots,\ell_{k-1})}\sum_{\ell_k=n_k}^{n_k} \Gamma(\alpha/\sigma+|\ell^{(k)}|)~q_{\ell^{(k)}}}
        {\sum_{(\ell_1,\ldots,\ell_{k-1})}\sum_{\ell_k=1}^{n_k-1} \Gamma(\alpha/\sigma+|\ell^{(k)}|+1)~q_{\ell^{(k)}}} \\[0.3cm]
        & = \frac{\sum_{(\ell_1,\ldots,\ell_{k-1})} \Gamma(\alpha/\sigma+|\ell^{(k-1)}|+n_k) ~q_{\ell^{(k-1)}} \frac{C(n_k,n_k;\sigma)}{K^{n_k}}}
        {\sum_{(\ell_1,\ldots,\ell_{k-1})}\sum_{\ell_k=1}^{n_k-1} \Gamma(\alpha/\sigma+|\ell^{(k-1)}|+\ell_k+1)~q_{\ell^{(k-1)}}~ \frac{C(n_k,\ell_k;\sigma)}{K^{\ell_k}}}\\[0.3cm]
        &\leq \frac{C(n_k,n_k;\sigma)}{K^{n_k}} \frac{K^{n_k-1}}{C(n_k,1;\sigma)}
        \frac{\sum_{(\ell_1,\ldots,\ell_{k-1})} q_{\ell^{(k-1)}} \Gamma(\alpha/\sigma+|\ell^{(k-1)}|+n_k)}
        {\sum_{(\ell_1,\ldots,\ell_{k-1})} q_{\ell^{(k-1)}} \sum_{\ell_k=1}^{n_k-1} \Gamma(\alpha/\sigma+|\ell^{(k-1)}|+\ell_k+1)} \\[0.3cm]
        &\leq \frac{C(n_k,n_k;\sigma)}{K~ C(n_k,1;\sigma)} \leq \frac{1}{K}.
    \end{align*}
    
    Finally, we have that 
    \begin{align*}
        \frac{1}{n}\frac{p(A)}{p(B)}&=\frac{K(k+1)}{n\sigma(K-k)}(R_1 + R_2)
        \leq \frac{K(k+1)}{n\sigma(K-k)}\left(\frac{1}{\alpha/\sigma+k} + \frac{1}{K}\right).
    \end{align*}  
    So Condition \ref{cond:part} is satisfied for the Pitman--Yor multinomial process.\\
        
(c) Normalized generalized gamma multinomial process:
using \eqref{eq:EPPF_NGGM} and following the same way as for the Pitman--Yor case, we have
        \begin{align*}
        \frac{1}{n}\frac{p(A)}{p(B)} &= \frac{1}{n}\frac{p(n_1,\ldots,n_k)}{p(n_1,\ldots,n_k-1,1)}\\
        & = \frac{k+1}{n(K-k)}\left(\sum_{(\ell_1,\ldots,\ell_k)} \frac{V_{n,|\ell^{(k)}|}}{K^{|\ell^{(k)}|}}\prod_{i=1}^{k} \frac{C(n_i,\ell_i;\sigma)}{\sigma^{\ell_i}}\right)\left(\sum_{(\ell_1,\ldots,\ell_{k+1})} \frac{V_{n,|\ell^{(k+1)}|}}{K^{|\ell^{(k+1)}|}}\prod_{i=1}^{k+1} \frac{C(n_i,\ell_i;\sigma)}{\sigma^{\ell_i}}\right)^{-1} \\
        &= \frac{k+1}{n(K-k)}\left(\sum_{(\ell_1,\ldots,\ell_k)} \frac{V_{n,|\ell^{(k)}|}}{K^{|\ell^{(k)}|}}\prod_{i=1}^{k} \frac{C(n_i,\ell_i;\sigma)}{\sigma^{\ell_i}}\right)\left(\sum_{(\ell_1,\ldots,\ell_{k})} \frac{V_{n,|\ell^{(k)}|+1}}{K^{|\ell^{(k)}|+1}}\prod_{i=1}^{k} \frac{C(n_i,\ell_i;\sigma)}{\sigma^{\ell_i}}\right)^{-1}\\ %\frac{C(1,1;\sigma)}{\sigma} 
        &=: \frac{K(k+1)}{n(K-k)} (R_1+R_2).
    \end{align*}
    As in \textsc{PYM} (b) proof, we separate the sum over $\ell_k$ in the numerator in two, $R_1$ corresponds to the first $n_k-1$ terms and $R_2$ to the last one.
    
    In the proof of Proposition \ref{prop:Gibbs}, we have shown that the ratio $\left(\frac{V_{n,k}}{V_{n,k+1}}\right)_{n\geq1}$ is bounded. Let $B\in\R^\star_+$ denote an upper bound of this sequence. Then
    \begin{align*}
        R_1 &= \left(\sum_{(\ell_1,\ldots,\ell_{k-1})}\sum_{\ell_k = 1}^{n_k-1} \frac{V_{n,|\ell^{(k)}|}}{K^{|\ell^{(k)}|}}\prod_{i=1}^{k} \frac{C(n_i,\ell_i;\sigma)}{\sigma^{\ell_i}}\right)\left(\sum_{(\ell_1,\ldots,\ell_{k-1})}\sum_{\ell_k = 1}^{n_k-1} \frac{V_{n,|\ell^{(k)}|+1}}{K^{|\ell^{(k)}|}}\prod_{i=1}^{k} \frac{C(n_i,\ell_i;\sigma)}{\sigma^{\ell_i}}\right)^{-1} \\
        &\leq B \left(\sum_{(\ell_1,\ldots,\ell_{k-1})}\sum_{\ell_k = 1}^{n_k-1} \frac{V_{n,|\ell^{(k)}|}}{K^{|\ell^{(k)}|}}\prod_{i=1}^{k} \frac{C(n_i,\ell_i;\sigma)}{\sigma^{\ell_i}}\right)\left(\sum_{(\ell_1,\ldots,\ell_{k-1})}\sum_{\ell_k = 1}^{n_k-1} \frac{V_{n,|\ell^{(k)}|}}{K^{|\ell^{(k)}|}}\prod_{i=1}^{k} \frac{C(n_i,\ell_i;\sigma)}{\sigma^{\ell_i}}\right)^{-1}\\
        &\leq B.
    \end{align*}
    Combining $\frac{V_{n,|\ell^{(k-1)}|+n_k}}{K^{|\ell^{(k-1)}|+n_k}} \leq \sum_{\ell_k = 1}^{n_k-1} \frac{V_{n,|\ell^{(k)}|+1}}{K^{|\ell^{(k)}|}}$ with similar arguments to the bounding of $R_2$ term in \textsc{pym} (b) above yield $R_2\leq\frac{1}{\sigma}$ 
    Finally, we obtain 
    \[\frac{1}{n}\frac{p(A)}{p(B)} \leq  \frac{K(k+1)(\sigma B+1)}{n\sigma(K-k)},\]
    so Condition \ref{cond:part} is satisfied for the normalized generalized gamma multinomial processes.

    Hence, there is inconsistency in the sense of Theorem \ref{thm} for the  Pitman--Yor multinomial process, the Dirichlet multinomial process, and the \textsc{NGGM} process.
    
\end{proof}

\section{Details on the results of Section \ref{sec:consistency}}
\label{an:bg}
\subsection{Theorem~1 of \cite{rousseau2011asymptotic}}
We recall the main result of \citet[Theorem~1]{rousseau2011asymptotic}. 
This result holds under some conditions on the mixture density, the kernel and the prior of the mixture model. Conditions \ref{hyp:1} and \ref{hyp:5}, stated previously, are conditions on the mixture density and on the prior density.
Under the notations used here, we stated the conditions on the kernel density, which need to have some regularity, integrability and strong identifiability properties. 
As a reminder, $\theta = (\theta_1,\ldots,\theta_K)$ denotes the set of component parameters, $w = (w_1,\ldots,w_K)$ denotes the weights of the mixing measure, $f(\cdot\mid\theta_i)$ denotes a component specific kernel density and $G=\sum_i w_i\delta_{\theta_i}$ denotes the mixing measure.
We have data observations $X_1,\ldots, X_n$ assumed to be independent and identically distributed from a mixture model with $K_0$ components, where $K_0<K$:
\[ f_0^X(x) = \sum_{k=1}^{K_0} w^0_k f(x\mid\theta_k^0), \qquad \theta_k^0\in \Theta. \]
\begin{Cond}[\citealp{rousseau2011asymptotic}, Assumption 2]
\label{hyp:2}
    The kernel function $\Tilde{\theta}\in\Theta\to f(\cdot\mid\Tilde{\theta})$ is three time differentiable and regular in the sense that for all $\Tilde{\theta}\in \Theta$ the Fisher information matrix that is associated with $f(\cdot\mid\Tilde{\theta})$ is positive definite at $\Tilde{\theta}$.
    Denote by $\nabla f(x\mid\theta)$ and $\mathrm{D}^2 f(x\mid\theta)$ respectively the vector of the first derivatives and the matrix of second derivatives of $f(x\mid\theta)$ with respect to $\theta$. Denote also by $\mathrm{D}^{(3)}f(x\mid\theta)$ the array whose components are $\frac{\partial^3 f(x\mid\theta)}{\partial\theta_{i_1}\partial\theta_{i_2}\partial\theta_{i_3}}$.

    For all $i\leq K_0$, there exists $\delta>0$ such that
    \[
    \int f_0^X(x) \frac{\sup_{|\theta_i^0-\theta|\leq \delta} f(x\mid\theta)^3}{\inf_{|\theta_i^0-\theta|\leq \delta} f(x\mid\theta)^3} \mathrm{d}x <\infty, 
    \quad \int f_0^X(x) \frac{\sup_{|\theta-\theta_i^0|\leq \delta} |\nabla f(x\mid\theta)|^3}{\inf_{|\theta_i^0-\theta|\leq \delta} f(x\mid\theta)^3} \mathrm{d}x <\infty, \]
    \[\quad \int f_0^X(x) \frac{|\nabla f(x\mid\theta_i^0)|^4}{f_0^X(x)^4} \mathrm{d}x <\infty, \]
    \[
    \int f_0^X(x) \frac{\sup_{|\theta-\theta_i^0|\leq \delta} |\mathrm{D}^2 f(x\mid\theta)|^2}{\inf_{|\theta_i^0-\theta|\leq \delta} f(x\mid\theta)^2} \mathrm{d}x <\infty, 
    \quad \int f_0^X(x) \frac{\sup_{|\theta-\theta_i^0|\leq \delta} |\mathrm{D}^{(3)} f(x\mid\theta)|^2}{\inf_{|\theta_i^0-\theta|\leq \delta} f(x\mid\theta)} \mathrm{d}x <\infty.
    \]
   Assume also that for all $i=1,\ldots,K_0$, $\theta_i^0\in \mathrm{int}(\Theta)$ the interior of $\Theta$.
\end{Cond}

\begin{Cond}[\citealp{rousseau2011asymptotic}, Assumption 3]
\label{hyp:3}
    There exists $\Theta_0\subset \Theta$ satisfying $\lambda(\Theta_0)>0$, where $\lambda(A)$ denotes the Lebesgue measure of $A$, and for all $i\leq K_0$, 
    \[ d(\theta_i^0,\Theta_0) = \inf_{\theta\in\Theta_0} |\theta-\theta_i^0|>0, \]
    and such that for $\theta\in\Theta_0$ there exists a $\delta>0$,
    \[ \int f^X_0(x) \frac{f(x\mid\theta)^4}{f_0(x)^4} \mathrm{d}x <\infty, \quad \int f^X_0(x) \frac{f(x\mid\theta)^3}{\sup_{|\theta'-\theta_i^0|\leq\delta}f(x\mid\theta')^3} \mathrm{d}x <\infty, \; \forall i\leq K_0. \]
\end{Cond}

\begin{Cond}[\citealp{rousseau2011asymptotic}, Assumption 4]
\label{hyp:4}
    For all ordered partitions $\mathbf{t}$ of $\{1,\ldots,K\}$ in $K_0+1$ clusters defined by the cardinality of each cluster, $\mathbf{t} = (t_i)_{i=0}^{K_0}$ with $0=t_0<t_1<\cdots<t_{K_0}\leq K$, let $(w,\theta) = (w_1,\ldots,w_K,\theta_1,\ldots,\theta_K)$ and write $(w,\theta)$ as $(\phi_\mathbf{t},\psi_\mathbf{t})$, where 
    \[ \phi_\mathbf{t} = ((\theta_j)_{j=1,\ldots,t_{K_0}}, (s_i)_{i=1,\ldots, K_0-1}, (w_j)_{j=t_{K_0}+1,\ldots,K}) \in \mathbb{R}^{dt_{K_0}+K_0+K+t_{K_0}-1}, \qquad s_i = \sum_{j=t_{i-1}+1}^{t_i} w_j -w_i^0, \, i=1,\ldots,K_0,\]
    and 
    \[ \psi_\mathbf{t} = ((q_j)_{j=1,\ldots,t_{K_0}}, \theta_{t_{K_0}+1},\ldots,\theta_{t_K}), \qquad q_i = w_i/\sum_{j=t_{i-1}+1}^{t_i} w_j, \, \text{where } i\in \{t_{i-1}+1,\ldots,t_i\}.\]
    We denote by $g^X(x)$ the density associated to the parameterization $(\phi^0_\mathbf{t}, \psi_\mathbf{t})$ of $(w,\theta)$.
    Then 
    \begin{multline*}
    (\phi_\mathbf{t}-\phi_\mathbf{t}^0)^T {g^X}'+\frac{1}{2} (\phi_\mathbf{t}-\phi_\mathbf{t}^0)^T {g^X}'' (\phi_\mathbf{t}-\phi_\mathbf{t}^0) = 0 \quad \Leftrightarrow 
    \\
    \forall i \leq K_0,\; s_i=0 \; \text{and} \; \forall j\in \{t_{i-1}+1,\ldots,t_i\}, \; q_j(\theta_j-\theta^0_j) = 0, \quad \forall i\geq t_{K_0}+1, \; w_i = 0.
    \end{multline*}
    Assuming also that if $\theta\not\in\{\theta_1,\ldots\theta_k\}$ then for all functions $h(\cdot\mid\theta)$ which are linear combinations of derivatives of $f(\cdot\mid\theta)$ of order less than or equal to 2 with respect to $\theta$, and all functions $h_1$ which are also linear combinations of derivatives of $f(\cdot\mid\theta_j)$, $j=1,\ldots,K$, and its derivatives of order less than or equal to 2, then $a h(\cdot\mid\theta)+ b h_1(\cdot) = 0$ if and only if $a h(\cdot\mid\theta)= b h_1(\cdot) = 0$.
\end{Cond}
This last condition can be extended to the non-compact cases if $\Theta$ is not compact as explained in \cite{rousseau2011asymptotic}.

We recall that $d$ denotes the dimension of $\theta$.
Under the three conditions detailed above, Condition \ref{hyp:1} and Condition \ref{hyp:5}, we can state the main result in \cite{rousseau2011asymptotic} in the following theorem.
\begin{Thm}[\citealp{rousseau2011asymptotic}, Theorem 1]
    Under all the five conditions recalled previously that the prior distribution satisfies, let $\mathcal{S}_K$ be the set of permutations of $\{1,\ldots,K\}$, $\alpha_{\mathrm{max}} = \max(\alpha_j,\,j\leq K)$ and $\alpha_{\mathrm{min}} = \min(\alpha_j,\,j\leq K)$.
    \begin{enumerate}[label=(\roman*)]
      \item If $\alpha_{\mathrm{max}}<d/2$, set $\rho = \left[dK_0+K_0-1+ \alpha_{\mathrm{max}}(K-K_0)\right]/(d/2-\alpha_{\mathrm{max}})$, then
      \[
      \lim_{M\to\infty} \limsup_n\left(\mathbb{E}_0^n\left[ \Pi\left\{ \min_{\sigma\in\mathcal{S}_K} \sum_{i=K_0+1}^K w_{\sigma(i)} > Mn^{-1/2} \log(n)^{q(1+\rho)} \,\middle|\, X_{1:n} \right\} \right]\right) = 0.
      \]
      \item If $\alpha_{\mathrm{min}}>d/2$, set $\rho' = \left[dK_0+K_0-1+ d(d-K_0)/2\right]/(\alpha_{\mathrm{min}}-d/2)(K-K_0)$, then
      \[
      \lim_{\epsilon\to0} \limsup_n\left(\mathbb{E}_0^n\left[ \Pi\left\{ \min_{\sigma\in\mathcal{S}_K} \sum_{i=K_0+1}^K w_{\sigma(i)} < \epsilon \log(n)^{-q(1+\rho')} \,\middle|\, X_{1:n} \right\} \right]\right) = 0.
      \]
    \end{enumerate}
\end{Thm}

\subsection{Merge-Truncate-Merge Algorithm of \cite{guha2021posterior}}
We recall the Merge-Truncate-Merge algorithm in \cite{guha2021posterior} used in Section \ref{sec:MTM}. This algorithm is a post-processing procedure applied on a posterior sample of the mixing measure $G$.
Applying this algorithm, a posterior contraction rate for the mixing measure under the Wasserstein metric, denoting $\omega_n$, is mandatory. More precisely, we need $G$ such that 
\[\Pi\left(G\,:\, W_r(G,G_0)\leq\delta\omega_n\mid X_{1:n}\right)\overset{p_{G_0}}{\longrightarrow} 1,\]
with $\omega_n=o(1)$ a vanishing rate, $r\geq1$.
We also need to choose a constant $c$ used in the second stage of the algorithm. There is no explicit way of choosing this constant in \cite{guha2021posterior}, we describe it as a regularisation parameter, which we illustrate in figure \ref{fig:DPM-reg-path}.
\renewcommand{\algorithmicrequire}{\textbf{Input:}}
\renewcommand{\algorithmicensure}{\textbf{Output:}}
\begin{algorithm}[H]
\caption{Recall of Merge-Truncate-Merge Algorithm (MTM) \citep{guha2021posterior}}
    \begin{algorithmic}[1]
        \REQUIRE{Posterior sample $G=\sum_i w_i \delta_{\theta_i}$, rate $\omega_n$, constant $c$.}
        \ENSURE{Discrete measure $\Tilde{G}$ and its number of supporting atoms $\Tilde{k}$.}
        
        \COMMENT{\textbf{Stage 1: Merge procedure}}
        
        \STATE Reorder atoms $\{\theta_1,\theta_2,\ldots\}$ by simple random sampling without replacement with corresponding weights $\{w_1,w_2,\ldots\}$, let $\tau_1,\tau_2,\ldots$ denote the new indices and set $\mathcal{E}=\{\tau_j\}_j$ as the existing set of atoms.
        
        \STATE Sequentially for each index $\tau_j\in\mathcal{E}$, if there exists an index $\tau_i<\tau_j$ such that $\|\theta_{\tau_i}-\theta_{\tau_j}\|\leq\omega_n$, then update $w_{\tau_i}=w_{\tau_i}+w_{\tau_j}$, and remove $\tau_j$ from $\mathcal{E}$.
        
        \STATE Collect $G'=\sum_{j:\,\tau_j\in\mathcal{E}} w_{\tau_j}\delta_{\theta_{\tau_j}}$, write $G'$ as $\sum_{i>1} q_i\delta_{\gamma_i}$ so that $q_1\geq q_2\geq\cdots$.

        \COMMENT{\textbf{Stage 2: Truncate-Merge procedure}}
        
        \STATE Set $\mathcal{A}=\left\{i:\, q_i>\left(c\omega_n\right)^r\right\}$ and $\mathcal{N}=\left\{i:\, q_i\leq\left(c\omega_n\right)^r\right\}$.
        
        \STATE For each index $i\in\mathcal{A}$, if there is $j\in\mathcal{A}$ such that $j<i$ and $q_i\|\gamma_i-\gamma_j\|^r\leq\left(c\omega_n\right)^r$, then remove $i$ from $\mathcal{A}$ and add it to $\mathcal{N}$.

        \STATE For each $i\in\mathcal{N}$, find the atom $\gamma_j$ among $j\in \mathcal{A}$ that is nearest to $\gamma_i$, update $q_j = q_j+q_i$.

        \STATE Return $\Tilde{G} = \sum_{j\in\mathcal{A}} q_j\delta_{\gamma_j}$ and $\Tilde{K} = |\mathcal{A}|$.
    \end{algorithmic}
\end{algorithm}

As recalled in Theorem \ref{thm:cons}, \cite{guha2021posterior} prove that the output $\tilde{K}$ of the MTM algorithm consistently estimates the number of clusters for any $c>0$.
This result holds under the assumption that there exists a contraction rate for the mixing measure. In order to have a contraction rate the kernel $f(\cdot\mid \theta)$ needs to satisfy some assumptions presented below.
\begin{Cond}[Second-order identifiability]
\label{hyp:id}
    The family of densities $\{f(\cdot\mid \theta), \;\theta\in\Theta\}$ is identifiable in the second order if $f(x\mid \theta)$ is twice differentiable in $\theta$ and for any finite $k$ different $\theta_1,\ldots,\theta_k\in\Theta$, the equality 
    \[\sup_x \left\vert \sum_{j=1}^k (\alpha_j f(x\mid \theta_j)+\beta_j^T \frac{\partial f}{\partial\theta}(x\mid \theta_j)+ \gamma_j^T \frac{\partial^2 f}{\partial\theta^2}(x\mid \theta_j)\gamma_j \right\vert=0\]
    implies that $\alpha_j=0\in\mathbb{R}$, $\beta_j=\gamma_j=0\in\mathbb{R}^d$ for $j=1,\ldots,k$.
\end{Cond}

\begin{Cond}[Uniform Lipschitz-continuity]
\label{hyp:lip}
    The family of densities $\{f(\cdot\mid \theta), \;\theta\in\Theta\}$ is uniformly Lipschitz continuous up to the second order if there exists a positive constant $\delta$ such that for any $R>0$ $\|\theta\|\leq R$, $\gamma\in\mathbb{R}^d$, $\theta_1,\theta_2\in\Theta$, there is a positive constant $C>0$ depending on $R$ such that for all $x\in\mathcal{X}$
    \[
    \left\vert \gamma^T\left(\frac{\partial^2 f}{\partial\theta^2}(x\mid\theta_1) -\frac{\partial^2 f}{\partial\theta^2}(x\mid\theta_2)\right)\gamma \right\vert \leq C \|\theta_1-\theta_2\|^\delta_1 \|\gamma\|_2^2.
    \]
\end{Cond}
For more details on these conditions and on contraction rate see \cite{chen_optimal_1995, ho_strong_2016}.

\subsection{Theorem~1 of \cite{scricciolo2014adaptive}}
In Section \ref{sec:consistency}, we introduce Lemma \ref{lem:Contract} which is a corollary of Theorem 1 in \cite{scricciolo2014adaptive}. 
This theorem gives a posterior contraction rate for the mixing measure of a Pitman--Yor mixture model relative to the $L^p$-metric.
We detailed below the conditions for this theorem and then we recalled the result.

Here, we assume that $\Theta\subset\mathbb{R}$. The model is less general than in the rest of the paper, we assume that the model is a location mixture defined as:
\begin{equation}
    \label{eq:loc_mix}
    f^X(x) = \int f(x\mid\theta,\tau)G(\mathrm{d}\theta)=  \int \tau^{-1} f((x-\theta_k)/\tau)G(\mathrm{d}\theta),    
\end{equation}
where $\tau$ is a scale parameter and $f(\cdot\mid\theta) = f(\cdot)$ denotes the kernel density. 
In this Section, we will assume that the scale parameter $\tau_0$ is known as the true scale parameter $\tau_0$. This can also be seen as $\tau$ following the prior distribution $\delta_{\tau_0}$.

The theorem holds under three conditions on the kernel density, the true mixing measure $G_0$ and the base measure of the Pitman--Yor process.

% \begin{Cond}[\citealp{scricciolo2014adaptive}, A0]
% \label{hyp:A0}
%     The prior distribution $\pi$ has a continuous and positive density $g$
% \end{Cond}

\begin{Cond}[\citealp{scricciolo2014adaptive}, Assumption A1]
\label{hyp:A1}
    The kernel probability density $f(\cdot\mid\theta):\mathbb{R}\to\mathbb{R}^+$ is symmetric around $0$, monotone decreasing in $|x|$ and satisfies the tail condition $f(x\mid\theta) \gtrsim e^{-c|x|^\kappa}$ as $|x|\to \infty$, for some constants $0<c,\, \kappa<\infty$.
\end{Cond}

\begin{Cond}[\citealp{scricciolo2014adaptive}, Assumption A2]
\label{hyp:A2}
    The true mixing measure $G_0$ satisfies the tail condition $G_0(\theta: \, |\theta|>t)\gtrsim e^{-c_0t^\varpi}$ as $t\to \infty$, for some constants $0<c_0<\infty$ and $0<\varpi\leq\infty$.
\end{Cond}

\begin{Cond}[\citealp{scricciolo2014adaptive}, Assumption A3]
\label{hyp:A3}
    The base measure $P$ has a continuous and positive density $p'$ on $\mathbb{R}$ such that $p'(\theta)\propto e^{-b|\theta|^\delta}$ as $|\theta|\to\infty$, for some constants $0<b,\,\delta<\infty$.
\end{Cond}
We also introduce the following set,
\[\mathcal{A}^{\rho,\eta,L} := \left\{f:\mathbb{R}\to\mathbb{R}^+ \, |\, \|f\|_1 = 1, \, \int e^{2(\rho|t|)^\eta}|\hat{f}(t)|^2\mathrm{d}t \leq 2\pi L^2\right\},\]
where $\hat{f}$ denotes the Fourier transform of $f$ and $\rho,L,\eta$ are some positive constants. 

We can now recall Theorem~1 from \cite{scricciolo2014adaptive}. Here, we state a simplified version of the theorem as we assumed the scale parameter to be known. The general statement requires additional conditions on the scale parameter. 
\begin{Thm}[\citealp{scricciolo2014adaptive}, Theorem 1]
    Let $f(\cdot\mid\theta)\in\mathcal{A}^{\rho,\eta,L}(\mathbb{R})$, $0<\rho,\eta,L<\infty$, be as in Condition \ref{hyp:A1}. Suppose that the true mixture density $f_0^X$, with
    \begin{enumerate}[label=(\roman*)]
        \item $G_0$ satisfying Condition \ref{hyp:A2} for some constants $0<c_0<\infty$ and given numbers $0<\kappa,\,\eta<\infty$ $\varpi$ be such that $0<\max\left\{\kappa, \,[1+\mathbb{1}_{(1,\infty)}(\eta)/(\eta-1)]\right\}\leq\varpi\leq\infty$.
    \end{enumerate}
    Let $G\sim\mathrm{PY}(\alpha,\sigma; P)$, with $0\leq\sigma<1$, $-\sigma<\alpha<\infty$ and a base measure $P$. Assume that
    \begin{enumerate}[label=(\roman*), resume]
        \item $P$ satisfies Condition \ref{hyp:A3} for constants $0<b,\,\delta<\infty$, with $\delta\leq\varpi$ when $\varpi<\infty$;
        % \item $$
    \end{enumerate}
    Then, the posterior contraction rate relative to the $L^p$-metric, $1\leq p\leq\infty$, denoted by $\omega_{n,p}$, is $n^{-1/2}\textcolor{forestgreen}{\log (n)}^\mu$, with a constant $0<\mu\infty$ possibly depending on $p$.
\end{Thm}

\section{Proofs of the results of Section \ref{sec:consistency}}

\label{an:prop4}
\begin{proof}[Proof of Proposition \ref{prop:over}]
    In the Dirichlet multinomial process case, the prior on the weights $w=(w_1,\ldots,w_K)$ is a finite-dimensional Dirichlet distribution which is of the form
    \[ \pi(w)= \frac{\Gamma(\alpha)}{\Gamma(\alpha/K)^K} w_1^{\alpha/K-1}w_2^{\alpha/K-1}\cdots w_K^{\alpha/K-1} \mathbb{I}(w \in \Delta_K), \]
    where $\Delta_K$ denotes the $K$-dimensional simplex. So, the prior is of the same form as in Condition \ref{hyp:5} with $C(w) = \Gamma(\alpha)/\Gamma(\alpha/K)^K\,\mathbb{I}(w \in \Delta_K)$ which is a constant on the simplex. Condition \ref{hyp:1} is verified using Theorem 4.1 from \cite{rousseau2019bayesian} which can also be applied to overfitted mixtures. Hence, the result \cite{rousseau2011asymptotic} applies in this case.

\textcolor{forestgreen}{    
    In the Pitman--Yor multinomial case, the prior on the weights is a ratio-stable distribution defined in \cite{carlton2002family} and denoted by $w\sim RS(\sigma,\tilde{\alpha};1/K,\ldots,1/K)$. In the general case, no density is available so it is not possible to satisfy \ref{hyp:5} and the \cite{rousseau2019bayesian} results cannot give us any guarantee. 
    In the interesting $\sigma=1/2$ case, the density is
    \[ \pi(w) = \frac{\left(1/K\right)^K}{\pi^{\frac{K-1}{2}}} \frac{\Gamma(\tilde{\alpha}+K/2)}{\Gamma(\tilde{\alpha}+1/2)} \frac{w_1^{-3/2}\cdots w_K^{-3/2}}{\left(\frac{1}{w_1K^2}+\cdots+\frac{1}{w_KK^2}\right)^{\tilde{\alpha}+K/2}} \mathbb{I}(w\in\Delta_K). \]
    To write this density in the form  $ \pi(w) = C(w) \prod_{i=1}^Kw_i^{\alpha_i-1}$ we must set:
    \[C(w) \propto \frac{\prod_{i=1}^Kw_i^{-\alpha_i-1/2}}{\left(\sum_{i=1}^Kw_i^{-1}\right)^{\tilde \alpha+K/2}}\]
    Condition \ref{hyp:5} from requires $C(w)$ to be bounded above and below. A necessary condition is obtained by studying the limit of $C(w)$ for any $w_i \to 0$ with the others remaining bounded away from 0.
    \[ \text{As } w_i \to 0; C(w) = \mathcal{O}\left(w_i^{\tilde \alpha - \alpha_i + \frac{K-1}{2}}\right)\]
    For $C(w)$ to remain bounded above and below in this limit requires $\alpha_i = \tilde \alpha + \frac{K-1}{2}$.
    Enforcing this necessary condition for each $\alpha_i$ independently requires rewriting $C(w)$ as:
        \begin{align*}
        C(w) &\propto \frac{\prod_{i=1}^Kw_i^{-(\alpha+K/2)}}{\left(\sum_{i=1}^Kw_i^{-1}\right)^{\tilde \alpha+K/2}} \\
        & \propto \frac{1}{(w_1\cdots w_K)^{\tilde{\alpha}+K/2}}\times\left(\frac{w_1\cdots w_K}{w_2\cdots w_K+w_1w_3\cdots w_K+\cdots+w_1\cdots w_{K-1}}\right)^{\tilde{\alpha}+K/2} \\
        & \propto \left(\frac{1}{w_2\cdots w_K+w_1w_3\cdots w_K+\cdots+w_1\cdots w_{K-1}}\right)^{\tilde{\alpha}+K/2} 
    \end{align*}
    This quantity is bounded from below for all $\tilde \alpha>0$, for any $w_i \to 0$ independently, but not for two $w_i, w_j \to 0$ at different rates, in which case all terms at the denominator vanish and $C(w)$ diverges.
    We have found a set of necessary conditions which are incompatible, hence there is no choice of $\alpha_i$ such that Condition \ref{hyp:5} can be satisfied, and in this case too the \cite{rousseau2019bayesian} results cannot give us any guarantee. 
    }

\end{proof}

\begin{proof}[Proof of Lemma \ref{lem:Contract}]
This is a direct application of Corollary 1 from \cite{scricciolo2014adaptive}. To apply this corollary, we must check that the kernel $f(\cdot\mid\theta)$ associated with the mixing measure $G$ is a symmetric probability density such that, for some constants $0<\rho<\infty$ and $0<\eta\leq2$, the Fourier transform $\hat{f}$ of $f(\cdot\mid\theta)$ satisfies:
\[|\hat{f}(t)|\sim e^{-\left(\rho|t|\right)^\eta} \; \text{as } |t|\to\infty.\]
This is satisfied by assumption. 
In Condition \ref{hyp:A1}, the kernel $f(\cdot\mid\theta)$ is assumed to be symmetric, monotone decreasing in $|x|$ and to satisfy a tail condition. The kernel $f(\cdot\mid\theta)$ also belongs to the set 
\[\mathcal{A}^{\rho,\eta,L} := \left\{f:\mathbb{R}\to\mathbb{R}^+ \, |\, \|f\|_1 = 1, \, \int e^{2(\rho|t|)^\eta}|\hat{f}(t)|^2\mathrm{d}t \leq 2\pi L^2\right\},\]
where $\hat{f}$ denotes the Fourier transform of $f$ and $\rho,L,\eta$ are some positive constants. 

We also need to check that for a sequence $\Tilde{\varepsilon}_n >0$ such that $\Tilde{\varepsilon}_n\to0$ as $n\to \infty$ and $n\Tilde{\varepsilon}_n^2\gtrsim\textcolor{forestgreen}{\log (n)}^{1/\eta}$, we have
\[ \Pi(B_\text{KL}(f_0^X; \Tilde{\varepsilon}_n^2))\gtrsim \exp(-Cn\Tilde{\varepsilon}_n^2) \; \text{for some constant } 0<C<\infty,\]
where $B_\text{KL}(f_0^X; \varepsilon^2) := \left\{f:\; \int f_0^X\log(f_0^X/f)\leq \varepsilon^2,\, \int f_0^X(\log(f_0^X/f))^2\leq\varepsilon^2\right\}$ denotes Kullback--Leibler neighbourhoods of the true density $f_0^X$. 
This condition is verified in the second part of the proof of Theorem 1 in \cite{scricciolo2014adaptive}. 
%This proof begins by proving that there exists a discrete probability measure $f_0^X'$ with at most $N$ support points, $N\lesssim\left(\log\frac{1}{\varepsilon}\right)^{2\tau-1}$, such that $\|f_{f_0^X^\star,\sigma_0}-f_{f_0^X',\sigma_0}\|_\infty\lesssim\varepsilon$. This part is general and true for all processes.
%Then, we need to control the $L_1$-balls probability under the different processes. In the Pitman--Yor case, we need to use the stick-breaking construction, so it uses the Lemma 1 and 2 from \cite{scricciolo2014adaptive}. 
%We obtain the expected condition with $\Tilde{\varepsilon} = n^{-1/2}(\log n)^{2\tau-1/2}$ which is of the good form.
\end{proof}

\section{Details on the simulation study of Section \ref{sec:simulation}}
\label{an:simulation}
We consider the mixture model, with $K=10$: 
\begin{align*}
  f^X(x) = \sum_{k = 1}^{K} w_k f(x \mid \mu_k, \Sigma_k).
  \end{align*}
  Parameters have the following prior distributions:
  \begin{align*}
    (w_1, \ldots,w_K) \sim  \text{Dir}_K(\Bar{\alpha},\ldots,\Bar{\alpha}), \quad \Bar{\alpha}= \alpha/K,\\
   \mu_k \sim \mathcal{N}(b_0, B_0), \quad k = 1,\ldots, K,\\
   \Sigma_k^{-1} \sim \mathcal{W}(c_0, C_0), \quad C_0 \sim \mathcal{W}(h_0, H_0).
\end{align*}
Parameters for Wishart distribution are defined as in \cite{malsiner-walli_model-based_2016}: $c_0= 2.5 +\frac{r-1}{2}$, $h_0= 0.5 +\frac{r-1}{2}$, $H_0 = \frac{100\,h_0}{c_0}\text{diag}(1/R_1^2,\ldots,1/R_r^2)$, and $B_0 = \text{diag}(R_1^2,\ldots,R^2_r)$, where $r$ is dimension of $\Sigma$ matrix, and $R_j$ is the range of the data in each dimension.
%($r$=2).
Parameter $b_0$ is set to the median of the data. 

We run two MCMC chains of 20 000 iterations each, with 10 000 burn-in iterations.
Convergence assessment was done through the calculation of Gelman-Rubin diagnostics \citep{gelman1992inference} and visual
inspection of the trace plots. \\

We provide here some more details on Figure \ref{fig:DPM-varying-alpha} in Discussion.
Figure \ref{fig:DPM-varying-alpha} illustrates two different cases where the parameter $\Bar{\alpha}$ is not fixed. First, we consider the fixed prior expected number of clusters, such as  $\mathbb{E}[K_n]= 5$, which leads to decreasing of the parameter $\Bar{\alpha}$ with $n$. Posterior distribution of the number of clusters is presented in Figure~\ref{fig:DPM-varying-alpha} (a). 
%We can see that the posterior concentrates at a correct number of clusters for different values of $n $. This observation is consistent with the posterior distribution of the weights at Figure~\ref{fig:DPM-varying-alpha} (b).
%Although we can not directly compare our experimental results with results obtained by \cite{ohn2020optimal} due to different theoretical assumptions,  we can note that theoretical results obtained by \cite{ohn2020optimal} require that $\Bar{\alpha}$ decreases as $n^{-a_{0}}$, where $a_0>0$, which is faster than the $1/\log{n}$ decrease induced by fixing the expectation.  So the obtained results suggest that the slower decrease rate for $\alpha$ might be enough to obtain consistency.
%We can not directly compare with results obtained by \cite{ohn2020optimal}, as theoretical assumptions are not satisfied for the application of Theorem 3.1 of \cite{ohn2020optimal} and we consider the Dirichlet multinomial process. We can note that \cite{ohn2020optimal} theory requires that $\Bar{\alpha}$ decreases as $n^{-a_{0}}$, where $a_0>0$, which is faster than the $1/\log{n}$ decrease induced by fixing the expectation, so the obtained results suggests that the slower decrease maybe be enough too obtain consistency
The second approach consists in using the hyperprior for parameter $\Bar{\alpha}$. We consider  $\Bar{\alpha} \sim \text{Ga}(a,bK)$, where parameters $a =1$, $b = 0.1$ and $K=10$ is the number of components, which leads to less informative prior distribution of the number of clusters. This simulation setting is also different from theoretical assumptions required by \cite{ascolani2022clustering}. %However, we can note that in this case posterior distribution  also seems to concentrate at the $K_0$=3. 

\section{Details on the real-data analysis of Section \ref{sec:real}}
\label{an:real}
We consider two different mixture models in Section \ref{sec:real}.
The first one is of the form:
\[ f^X(x) = \int_\Theta f(x\mid \theta) G(\mathrm{d}\theta),\]
where $\theta_k=(\mu_k, \Tilde{\sigma}_k^2)$ and $f(x\mid \theta)=\mathcal{N}(x\mid \mu,\Tilde{\sigma}^2)$. Here, the mixing measure is distributed as a Pitman--Yor process, $G \sim \mathrm{PY}(\alpha, \sigma; P)$, where $P$ is the base measure defined hierarchically as the following:
\begin{align*}
    \Tilde{\sigma}_k^2 &\overset{\mathrm{iid}}{\sim} \mathrm{IG}(a_0, b_0), \quad\;\; k >1,\\
    \mu_k \mid \Tilde{\sigma}_k^2 &\overset{\mathrm{ind}}{\sim} \mathcal{N}(m_0,\Tilde{\sigma}_k^2), \quad k >1.
\end{align*}
$\mathrm{IG}$ denotes the Inverse-Gamma distribution.
Parameters for the Inverse-Gamma distribution are defined as the default values of the \texttt{BNPmix} package \citep[see][]{corradin_bnpmix_2021}: $a_0 = 2$, $b_0$ is set as the sample variance of the data and $m_0$ as the sample mean of the data.
We used various values of  $\alpha \in\{0.01,0.5\}$ and $\sigma\in\{0.1,0.25\}$

We run four MCMC chains of 20 000 iterations each, with 10 000 burn-in iterations using the marginal sampler of the \texttt{BNPmix} package.
%Convergence assessment was made through the calculation of Gelman--Rubin diagnostics \citep{gelman1992inference} and visual inspection of the trace plots.

The second model is of the following form:
\[
 f^X(x) = \sum_{k = 1}^{K} w_k \mathcal{N}(x \mid \mu_k, \Tilde{\sigma}_k^2),
\]
with $K=10$. In this case, the mixing measure is distributed as a Dirichlet multinomial process. The parameters have the following prior distributions:
  \begin{align*}
    (w_1, \ldots,w_K) &\sim  \text{Dir}_K(\Bar{\alpha},\ldots,\Bar{\alpha}), \quad \Bar{\alpha}= \alpha/K,\\
   \mu_k &\sim \mathcal{N}(b_0, B_0), \quad k = 1,\ldots, K,\\
   \Tilde{\sigma}_k^2 &\sim \mathrm{IG}(c_0, C_0), \quad C_0 \sim \mathrm{Ga}(h_0, H_0).
\end{align*}
The parameters for the Gamma distribution are defined as previously but in a univariate form: $c_0= 2.5$, $h_0= 0.5$, $H_0 = \frac{100\,h_0}{c_0\,R^2}$, and $B_0 = R^2$ where $R$ is the range of the data. Parameter $b_0$ is set to the median of the data.
We used various values of $\Bar{\alpha}=\alpha/K\in\{0.01,0.5,1,2\}$.

We run two MCMC chains of 80 000 iterations each, with 30 000 burn-in iterations. For both models, convergence assessment was done through the calculation of Gelman--Rubin diagnostics \citep{gelman1992inference} and visual inspection of the trace plots.

% \newpage
% \input{figure_table}

\end{document}